\newtheorem{theorem}{Theorem}[section]
\newtheorem{definition}[theorem]{Definition}
\newtheorem{lemma}[theorem]{Lemma}
\newtheorem{proposition}[theorem]{Proposition}
\newtheorem{rem}[theorem]{Remark}
\numberwithin{equation}{section}
\newcommand{\Q}{\mathbb Q}
\newcommand{\F}{\mathbb F}
\newcommand{\R}{\mathbb R}
\newcommand{\pp}{\mathcal{P}}
\newcommand{\Z}{\mathbb Z}
\newcommand{\ZK}{{\mathbb Z}_K}
\newcommand{\p}{\mathfrak p}
\newcommand{\q}{\mathfrak q}
\def\as#1{\renewcommand\arraystretch{#1}}
\def\be{\bigskip}
\def\disc{\operatorname{disc}}
\def\fp{{\mathbb F}_p}
\def\fph{\F_{\phi}}
\def\gen#1{\big\langle\, {#1} \,\big\rangle}
\def\imp{\,\Longrightarrow\,}
\def\id{\ind_{\phi}^{(2)}}
\def\ind{\operatorname{ind}}
\def\la{\lambda}
\def\leg#1#2{\as{.8}\left(\begin{array}{c}#1\\\hline#2
         \end{array}\right)}
\def\lra{\longrightarrow}
\def\md#1{\ \mbox{\rm(mod }{#1})}
\def\om{\omega}
\def\n{\op{N}^{(2)}_{\phi}}
\def\nph{\op{N}_{\phi}}
\def\npp{\op{N}_{\phi}^-}
\def\op{\operatorname}
\def\om{\omega}
\def\ord{\operatorname{ord}}
\def\p{\mathfrak{p}}
\def\pp{\mathcal{P}}
\def\qb{\overline{\Q}}
\def\rd{\operatorname{red}}
\def\sii{\ \Longleftrightarrow\ }
\def\t{\theta}
\def\tq{\,\,|\,\,}
\def\vp{v_p^{(2)}}
\def\v2{v_2^{(2)}}
\def\zp{\Z_{(p)}}
\def\zpx{\Z_p[x]}
\title{Newton polygons and $p$-integral bases}
\newfont{\tit}{cmr12 scaled \magstep3}
\author[El fadil]{Lhoussain El fadil}
\address{Lyc\'ee Reda Slaoui (CPGE), 
P.O. Box 3149, 
Talborjt, Agadir, Morocco}
\email{lhoussainelfadil@yahoo.fr}
\author[Montes]{\hbox{Jes\'us Montes}}
\address{Departament de Ci\`encies Econ\`omiques i Socials,
Facultat de Ci\`encies Socials,
Universitat Abat Oliba CEU,
Bellesguard 30, E-08022 Barcelona, Catalonia, Spain}
\email{montes3@uao.es}
\author[Nart]{\hbox{Enric Nart}}
\address{Departament de Matem\`{a}tiques,
         Universitat Aut\`{o}noma de Barcelona,
         Edifici C, E-08193 Bellaterra, Barcelona, Catalonia, Spain}
\email{nart@mat.uab.cat}
\thanks{Partially supported by MTM2006-11391 from the Spanish MEC}
\date{}
\begin{document}

\maketitle

\begin{abstract}
Let $p$ be a prime number. In this paper we use an old technique of \O. Ore, based on Newton polygons, to construct in an efficient way $p$-integral bases of number fields defined by a {\it $p$-regular} equation. To illustrate the potential applications of this construction, we show how this result yields a computation of a $p$-integral basis of an arbitrary quartic field in terms of a defining equation.   
\end{abstract}

\section*{Introduction}
In his 1923 PhD thesis and a series of subsequent papers, \O ystein Ore extended the arithmetic applications of Newton polygons \cite{ore0}, \cite{ore2}. Let $f(x)\in\Z[x]$ be a monic irreducible polynomial, $K$ the number field generated by a root $\t$ of $f(x)$, and $p$ a prime number. If $f(x)$ is \emph{$p$-regular} (Definition \ref{pregular}), Ore determined the prime ideal decomposition of $p$ in $K$ and the $p$-valuation of the index of $\Z[\t]$ inside the maximal order, in terms of combinatorial data attached to different $\phi$-Newton polygons, where the polynomials $\phi(x)$ are monic lifts to $\Z[x]$ of the different irreducible factors of $f(x)$ modulo $p$. In this paper we show that the ideas of Ore can be extended to efficiently compute a $p$-integral basis too. Actually, the data that is required to build a $p$-integral basis is a by-product of the $\phi$-adic developments of $f(x)$, whose computation is necessary to build up the Newton polygons.

In section \ref{secNP} we briefly recall the work of Ore, and in section \ref{secMain} we prove our main theorem (Theorem \ref{main}). To illustrate its wide range of applicability, we use it to obtain an explicit $p$-integral basis of an arbitrary quartic number field, in terms of a defining equation; to this end are devoted sections \ref{secPrel} and \ref{secComputation}. 
A few cases that cannot be solved by Theorem \ref{main} are handled in section \ref{secOrder2} after generalizing this theorem to \emph{second order Newton polygons} (Theorem \ref{main2}).   

The theory of Newton polygons of higher order was developed in \cite{m} (and revised in \cite{GMN}) as a tool to factorize separable polynomials in $\Z_p[x]$. Theorems \ref{main} and \ref{main2} have to be considered as the first steps towards a fast algorithm to compute integral basis in number fields, based on these higher order Newton polygons \cite{GMN2}. 

\section{Newton polygons}\label{secNP}
\subsection{$\phi$-Newton polygons}
Let $p$ be a prime number. Let $\Z_p$ be the ring of $p$-adic numbers, $\Q_p$ the fraction field of $\Z_p$ and $\qb_p$ an algebraic closure of $\Q_p$. 
We denote by $v_p\colon \qb_p^{\,*}\longrightarrow \Q$, the $p$-adic valuation normalized by $v_p(p)=1$; we extend $v_p$ to a discrete valuation of $\Q_p(x)$ by letting it act in the following way on polynomials:
$$
v_p(a_0+a_1x+\cdots+a_rx^r):=\min_{0\le i\le r}\{v_p(a_i)\},
$$

Let $\phi(x)\in\Z_p[x]$ be a monic polynomial of degree $m$ whose reduction modu\-lo $p$ is irreducible. We denote by $\fph$ the finite field $\zpx/(p,\phi(x))$, and by 
$$\hphantom{m}^{\overline{\hphantom{m}}}\,\colon\zpx\lra\fp[x],\qquad \rd\colon \zpx\lra \fph$$
the respective homomorphisms of reduction modulo $p$ and modulo $(p,\phi(x))$.

Any $f(x)\in\Z_p[x]$ admits a unique $\phi$-adic development:
\begin{equation}\label{phiadic}
f(x)=a_0(x)+a_1(x)\phi(x)+\cdots+a_r(x)\phi(x)^r,
\end{equation}
with $a_i(x)\in\Z_p[x]$, $\deg a_i(x)<m$. To any coefficient $a_i(x)$ we attach the $p$-adic value $u_i=v_p(a_i(x))\in\Z\cup\{\infty\}$, and the point of the plane $(i,u_i)$, if $u_i<\infty$.

\begin{definition}
The $\phi$-Newton polygon of $f(x)$ is the lower convex envelope of the set of points $(i,u_i)$, $u_i<\infty$, in the Euclidian plane.
We denote this open polygon by $\nph(f)$.
\end{definition}

The length of this polygon is by definition the abscissa of the last vertex. We denote it by  $\ell(\nph(f)):=r=\lfloor \deg(f)/m\rfloor$. Note that $\deg f(x)=m r+\deg a_r(x)$.

Usually, $f(x)$ will be a monic polynomial not divisible by $\phi(x)$ in $\Z_p[x]$ (i.e. $a_0(x)\ne0$); in this case, the typical shape of this polygon is shown in the following figure.

\begin{center}
\setlength{\unitlength}{5.mm}
\begin{picture}(12,8)
\put(7.85,-.15){$\bullet$}\put(5.85,.85){$\bullet$}\put(4.85,-.15){$\bullet$}\put(2.85,1.85){$\bullet$}
\put(1.85,1.85){$\bullet$}\put(.85,3.85){$\bullet$}\put(-.15,5.85){$\bullet$}
\put(-1,0){\line(1,0){12}}\put(0,-1){\line(0,1){8}}
\put(5,0){\line(-3,2){3}}\put(2,2){\line(-1,2){2}}\put(5,.03){\line(-3,2){3}}
\put(2,2.03){\line(-1,2){2}}\put(8,0){\line(-1,0){3}}\put(8,.02){\line(-1,0){3}}
\put(9.85,1.85){$\bullet$}\put(8,0){\line(1,1){2}}\put(8,.02){\line(1,1){2}}
\put(9.9,-.8){\begin{footnotesize}$r$\end{footnotesize}}
\put(4,-.8){\begin{footnotesize}$\ord_{\overline{\phi}}\left(\overline{f}\right)$\end{footnotesize}}
\put(-.4,-.6){\begin{footnotesize}$0$\end{footnotesize}}
\put(5,4.6){\begin{footnotesize}$\nph(f)$\end{footnotesize}}
\multiput(10,-.1)(0,.25){9}{\vrule height2pt}
\end{picture}\qquad\qquad
\begin{picture}(7,8)
\put(4.85,-.15){$\bullet$}\put(2.85,1.85){$\bullet$}
\put(1.85,1.85){$\bullet$}\put(.85,3.85){$\bullet$}\put(-.15,5.85){$\bullet$}
\put(-1,0){\line(1,0){8}}\put(0,-1){\line(0,1){8}}
\put(5,0){\line(-3,2){3}}\put(2,2){\line(-1,2){2}}\put(5,.03){\line(-3,2){3}}
\put(2,2.03){\line(-1,2){2}}
\put(4,-.8){\begin{footnotesize}$\ord_{\overline{\phi}}\left(\overline{f}\right)$\end{footnotesize}}
\put(-.4,-.6){\begin{footnotesize}$0$\end{footnotesize}}
\put(2.6,4.6){\begin{footnotesize}$\npp(f)$\end{footnotesize}}
\end{picture}
\end{center}\be\medskip
\begin{center}
Figure 1
\end{center}
\be

The $\phi$-Newton polygon is the union of different adjacent \emph{sides} $S_1,\dots,S_g$ with increasing slope $\la_1<\la_2<\cdots<\la_g$.
We shall write $\nph(f)=S_1+\cdots+S_g$. The end points of the sides are called the \emph{vertices} of the polygon.

\begin{definition}
The polygon determined by the sides of negative slope of $\nph(f)$ is called the \emph{principal $\phi$-polygon} of $f(x)$ and will be denoted by $\npp(f)$. The length of $\npp(f)$ is always equal to the highest exponent $a=\ord_{\overline{\phi}}\left(\overline{f}\right)$ such that $\overline{\phi(x)}{}^{\,a}$ divides $\overline{f(x)}$ in $\fp[x]$.
\end{definition}

\begin{definition}\label{phindex}
The \emph{$\phi$-index of $f(x)$} is $\deg \phi$ times the number of points with integral coordinates that lie below or on the polygon $N_{\phi}^-(f)$, strictly above the horizontal axis, and strictly beyond the vertical axis. We denote this number by $\ \ind_{\phi}(f)$.
\end{definition}

From now on, we denote $N=\npp(f)$ for simplicity. The principal polygon $N$ and the set of points $(i,u_i)$ that lie on $N$, contain the arithmetic information we are interested in. Note that, by construction, these points $(i,u_i)$ lie all above $N$.

We attach to any abscissa $0\le i\le \ell(N)$ the following \emph{residual coefficient} $c_i\in\fph$:
$$\as{1.6}
c_i=\left\{\begin{array}{ll}
0,&\mbox{ if $(i,u_i)$ lies strictly above $N$ or }u_i=\infty,\\\rd\left(\dfrac{a_i(x)}{p^{u_i}}\right),&\mbox{ if $(i,u_i)$ lies on }N.
\end{array}
\right.
$$ Note that $c_i$ is always nonzero in the latter case, because $\deg a_i(x)<m$. 

Let $S$ be one of the sides of $N$, with slope $\la$. Let $\la=-h/e$, with $h,e$ positive coprime integers. We introduce the following notations:
\begin{enumerate}
\item the \emph{length of $S$} is the length, $\ell(S)$, of the projection of $S$ to the $x$-axis,
\item the \emph{degree of $S$} is $d(S):=\ell(S)/e$,
\item the \emph{ramification index of $S$} is $e(S):=e=\ell(S)/d(S)$.
\end{enumerate} 

Note that $S$ is divided into $d(S)$ segments by the
points of integer coordinates that lie on  $S$. These points contain important arithmetic information, encoded by a polynomial that is built with the
coefficients of the $\phi$-adic development of $f(x)$ to whom these points are attached.

\begin{definition}
Let $s$ be the initial abscissa of $S$, and let $d:=d(S)$. We define the \emph{residual polynomial} attached to $S$ (or to $\la$) to be the polynomial:
$$
R_{\la}(f)(y):=c_s+c_{s+e}\,y+\cdots+c_{s+(d-1)e}\,y^{d-1}+c_{s+de}\,y^d\in\fph[y].
$$
\end{definition}

\noindent{\bf Remarks. }
\begin{enumerate}
\item Note that $c_s$ and $c_{s+de}$ are always nonzero, so that the residual polynomial has degree $d$ and it is never  divisible by $y$.
\item The residual polynomial depends on $\phi(x)$, but we omit the reference to $\phi(x)$ in the notation, since this polynomial will be specified in each context. 
\end{enumerate}

\subsection{Arithmetic applications of Newton polygons}
In this section we recall some results of \O.\ Ore on arithmetic applications of Newton polygons \cite{ore0,ore2}. Modern proofs of these results can be found in \cite[Sec.1]{GMN}.

We fix from now on a monic irreducible polynomial $f(x)\in\Z[x]$ of degree $n$, and a root  $\t\in\qb$ of $f(x)$. We denote by $K=\Q(\t)$ the number field generated by $\t$ and by $\ZK$ the ring of integers of $K$. Let
$$\ind_p(f):=v_p\left(\left(\ZK\colon\Z[\t]\right)\right),$$ be the $p$-adic value of the index of the polynomial $f(x)$. Recall the well-known relationship, $v_p(\disc(f))=2\ind_p(f)+v_p(\disc(K))$, between $\ind_p(f)$, the discriminant of $f(x)$ and the discriminant of $K$.

Let $\pp$ be the set of prime ideals of $K$ lying above $p$.
For any $\p\in\pp$, we denote by $v_{\p}$ the discrete valuation of $K$ associated to $\p$, and by $e(\p/p)$ the ramification index of $\p$. Endow $K$ with the $\p$-adic topology and fix a topological embedding $\iota_\p\colon K\hookrightarrow \qb_p$; we have then,
$$
v_\p(\alpha)=e(\p/p)v_p(\iota_\p(\alpha)),\quad\forall \,\alpha\in K.
$$
In particular, if $\t^\p:=\iota_\p(\t)\in\qb_p$, then  
\begin{equation}\label{localglobal}
v_\p(P(\t))=e(\p/p)v_p(P(\t^\p)),\quad \forall\, P(x)\in\Z[x]. 
\end{equation}

After Hensel's work, we know that  there is a canonical bijection between $\pp$ and the set of monic irreducible factors of $f(x)$ in $\Z_p[x]$.  With the above notations, the irreducible factor attached to a prime ideal $\p$
is the minimal polynomial $F_\p(x)\in\Z_p[x]$ of $\t^\p$ over $\Q_p$.

Choose monic polynomials $\phi_1(x),\dots,\phi_t(x)\in\Z[x]$ whose reduction modulo $p$ are the different irreducible factors of $\overline{f(x)}$ in $\F_p[x]$. We have then a decomposition:
$$
f(x)\equiv \phi_1(x)^{\ell_1},\dots,\phi_t(x)^{\ell_t} \md{p},
$$for certain positive exponents $\ell_1,\dots,\ell_t$. We have $n=m_1\ell_1+\cdots+m_t\ell_t$, where $m_i:=\deg\phi_i$. By Hensel's lemma, $f(x)$ decomposes in $\Z_p[x]$ as:
$$
f(x)=F_1(x)\cdots F_t(x),
$$for certain monic factors $F_i(x)\in\Z_p[x]$ such that $F_i(x)\equiv \phi_i(x)^{\ell_i} \md{p}$.

\begin{definition}For all $1\le i\le t$, define
$\pp_{\phi_i}:=\{\p\in\pp\tq v_\p(\phi_i(\t))>0\}=\{\p\in\pp\tq v_p(\phi_i(\t^\p))>0\}$.
\end{definition}

Since the polynomials $\phi_1(x),\dots,\phi_t(x)$ are pairwise coprime modulo $p$, the set $\pp$ splits as the disjoint union: 
$\ \pp=\pp_{\phi_1}\coprod\cdots\coprod \pp_{\phi_t}$.

\begin{lemma}\label{petit}Let $\phi(x):=\phi_i(x)$, for some $1\le i\le t$.
Let $P(x)\in\Z[x]$ be a polynomial of degree less than $\deg \phi$. Then, 
$$
v_\p(P(\t))=e(\p/p)\,v_p(P(x)), \quad \forall\,\p\in\pp_\phi.
$$ 
\end{lemma}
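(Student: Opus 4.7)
The plan is to reduce the claim, via the local-global identity (\ref{localglobal}), to the purely local statement
\[
v_p\bigl(P(\t^\p)\bigr)\;=\;v_p\bigl(P(x)\bigr),
\]
and then to prove this by a straightforward reduction-mod-$p$ argument, exploiting the smallness of $\deg P$ compared with $\deg\phi$.

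First, I would write $P(x)=p^m Q(x)$ with $m=v_p(P(x))$, so that $Q(x)\in\Z_p[x]$, $\deg Q<\deg\phi$, and $\overline{Q(x)}\neq 0$ in $\fp[x]$. Since $\overline{\phi(x)}$ is irreducible of degree $\deg\phi$ and $\deg\overline{Q}<\deg\overline{\phi}$, the polynomials $\overline{Q(x)}$ and $\overline{\phi(x)}$ are coprime in $\fp[x]$. Thus the claim reduces to proving that $Q(\t^\p)$ is a unit in the ring of integers of $\Q_p(\t^\p)$, i.e.\ $v_p(Q(\t^\p))=0$.

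The crux is to identify the residue class of $\t^\p$. Because $\p$ corresponds under Hensel's lemma to the irreducible factor $F_\p(x)\in\Z_p[x]$ of $f(x)$ of which $\t^\p$ is a root, and because the reduction modulo $p$ of any irreducible factor of $f(x)$ in $\Z_p[x]$ is a power of a single irreducible factor of $\overline{f(x)}$ in $\fp[x]$, the condition $\p\in\pp_\phi$ forces
\[
\overline{F_\p(x)}\;=\;\overline{\phi(x)}^{\,k_\p}
\]
for some $k_\p\ge 1$. Consequently, inside the residue field of $\Q_p(\t^\p)$ the class $\overline{\t^\p}$ is a root of $\overline{\phi(x)}$. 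Since $\overline{Q}$ and $\overline{\phi}$ are coprime in $\fp[x]$, we conclude $\overline{Q}(\overline{\t^\p})\neq 0$, hence $v_p(Q(\t^\p))=0$, and therefore $v_p(P(\t^\p))=m=v_p(P(x))$. Multiplying by $e(\p/p)$ and applying (\ref{localglobal}) finishes the proof.

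The only delicate point is the identification $\overline{F_\p(x)}=\overline{\phi(x)}^{\,k_\p}$ for $\p\in\pp_\phi$; everything else is a formal manipulation of valuations and reductions. This identification, however, is a direct consequence of Hensel's lemma applied to the squarefree-radical factorization of $\overline{f(x)}$, together with the definition $\pp_\phi=\{\p\tq v_p(\phi(\t^\p))>0\}$, so no serious obstacle arises.
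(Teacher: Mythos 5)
Your proposal is correct and follows essentially the same route as the paper: factor out $p^{v_p(P)}$, observe that the remaining $Q(x)$ has nonzero reduction of degree less than $\deg\overline{\phi}$, hence is not divisible by the irreducible $\overline{\phi}$, and conclude that $Q(\t)$ is a $\p$-unit for every $\p\in\pp_\phi$. The paper simply invokes the equivalence $v_\p(Q(\t))>0\sii\overline{\phi}\mid\overline{Q}$ directly, where you justify it via the local factor $F_\p$ and the residue class of $\t^\p$; this is only a more explicit rendering of the same step.
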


\begin{proof}
Take $\p\in\pp_\phi$. If $P(x)=0$ the statement is obvious; suppose $P(x)\ne0$ and let $P(x)=p^\nu Q(x)$, with $\nu=v_p(P(x))$. Since $\phi(x)$ is irreducible modulo $p$, $$
v_\p(Q(\t))>0\sii  \overline{\phi(x)}\mbox{ divides }\overline{Q(x)}\mbox{ in }\F_p[x].
$$ 
Since $\overline{Q(x)}\ne0$ and $\deg \overline{Q}\le\deg Q<\deg \phi=\deg\overline{\phi}$, we see that $v_\p(Q(\t))=0$, and:
$$
v_\p(P(\t))=v_\p(p^\nu Q(\t)))=v_\p(p^\nu)=e(\p/p)\,v_p(P(x)).
$$
\end{proof}

\noindent{\bf Notation. }If $\F$ is a finite field and $\varphi(y),\,\psi(y)\in\F[y]$, we write $\varphi\sim\psi$ to indicate that the two polynomials coincide up to multiplication by a nonzero constant in $\F$.

\begin{theorem}[\bf Theorem of Ore]\label{ore}
With the above notations, let $\phi(x):=\phi_i(x)$, $F(x):=F_i(x)$, for some $1\le i\le t$. Suppose that $$\npp(f)=S_1+\cdots+S_g$$ has $g$ different sides with slopes $\la_1<\cdots<\la_g$; then $F(x)$ admits a factorization in $\zpx$ into a product of $g$ monic polynomials
$$
F(x)=G_1(x)\cdots G_g(x),
$$
such that, for all $1\le j\le g$, 
\begin{enumerate}
 \item $\nph(G_j)$ is one-sided, with slope $\la_j$,
\item $R_{\la_j}(G_j)(y)\sim R_{\la_j}(F)(y)\sim R_{\la_j}(f)(y)$,
\item All roots $\alpha\in\qb_p$ of $G_j(x)$ satisfy $v_p(\phi(\alpha))=|\la_j|$. 
\end{enumerate}

Moreover, for all $j$, let $$R_{\la_j}(f)(y)=\psi_{j,1}(y)^{n_{j,1}}\cdots \psi_{j,r_j}(y)^{n_{j,r_j}}$$ be the decomposition of $R_{\la_j}(f)(y)$ into a product of powers of pairwise different irreducible monic polynomials in $\fph[y]$; then, the polynomial $G_j(x)$ experiments a further factorization in $\Z_p[x]$ into a product of $r_j$ monic polynomials
$$
G_j(x)=H_{j,1}(x)\cdots H_{j,r_j}(x),
$$   
such that, for all $1\le k\le r_j$, 
\begin{enumerate}
 \item $\nph(H_{j,k})$ is one-sided, with slope $\la_j$,
\item $R_{\la_j}(H_{j,k})(y)\sim \psi_{j,k}(y)^{n_{j,k}}$. 
\end{enumerate}

Finally, if  $n_{j,k}=1$, the polynomial $H_{j,k}(x)$ is irreducible in $\Z_p[x]$ and the ramification index and residual degree of the $p$-adic field $K_{j,k}$ generated by this polynomial are given by
$\
e(K_{j,k}/\Q_p)=e(S_j)$, $\, f(K_{j,k}/\Q_p)=\deg\phi\cdot\deg\psi_{j,k}$.\hfill{$\Box$}
\end{theorem}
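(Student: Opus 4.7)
The plan is to prove the three layers of the theorem in sequence: reduce from $f$ to its Hensel factor $F=F_i$, then factor $F$ according to the slopes of $\npp(F)$, then refine each slope factor using the decomposition of the residual polynomial, and finally verify irreducibility and the $e$, $f$ values in the multiplicity-one case. For the reduction, write $f=F\cdot U$ in $\zpx$ with $U=\prod_{k\ne i}F_k$; since $\overline U$ is coprime to $\overline\phi$ in $\fp[x]$, the constant term $u_0(x)$ of the $\phi$-adic expansion of $U$ satisfies $\rd(u_0)\in\fph^*$ and $v_p(U)=0$. Tracking coefficients in $F\cdot U$ through the $\phi$-adic filtration gives $\npp(f)=\npp(F)$ and $R_{\la_j}(f)\sim R_{\la_j}(F)$ on every side, so it suffices to analyze $F$.

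\emph{Slope factorization.} The vertices of $\npp(F)=S_1+\cdots+S_g$ sit at abscissas $0=s_0<s_1<\cdots<s_g=\ell_i$. At each inner vertex the residual coefficient $c_{s_j}\in\fph$ is nonzero, and this nondegeneracy is exactly what a Newton-polygon Hensel's lemma requires to split $F$ along $(s_j,u_{s_j})$ as a product $F=F_j'\cdot F_j''$ with $\deg F_j'=m s_j$ and $\nph(F_j')=S_1+\cdots+S_j$. Iterating this split gives $F=G_1\cdots G_g$ with each $\nph(G_j)$ one-sided of slope $\la_j$; compatibility of residual polynomials and the identity $v_p(\phi(\alpha))=|\la_j|$ for every root $\alpha$ of $G_j$ follow from the corresponding statements at each split (the roots of $G_j$ all lie over the same slope).

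\emph{Residual factorization.} Fix $j$, write $\la:=\la_j=-h/e$ with $\gcd(h,e)=1$, and let $R(y)=R_\la(G_j)(y)\in\fph[y]$. Given $R\sim\prod_k\psi_{j,k}^{n_{j,k}}$ with the $\psi_{j,k}$ pairwise coprime, a second Hensel-type lemma, now lifting coprime factorizations from $\fph[y]$ to $\zpx$ while respecting the two-dimensional $(p,\phi)$-adic filtration, produces $G_j=H_{j,1}\cdots H_{j,r_j}$ with each $\nph(H_{j,k})$ one-sided of slope $\la$ and $R_\la(H_{j,k})\sim\psi_{j,k}^{n_{j,k}}$. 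This is the central technical step: the usual polynomial Hensel's lemma must be upgraded to respect the graded structure attached to the Newton polygon, and I expect it to be the main obstacle of the proof.

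\emph{Irreducibility when $n_{j,k}=1$.} Let $H:=H_{j,k}$, let $\alpha\in\qb_p$ be any root, and put $L=\Q_p(\alpha)$. From $v_p(\phi(\alpha))=h/e$ with $\gcd(h,e)=1$ one deduces $e\mid e(L/\Q_p)$. A direct computation with the one-sided polygon and $R_\la(H)\sim\psi_{j,k}$ shows that the element $\phi(\alpha)^e/p^h$ lies in $\oo_L$ and reduces modulo the maximal ideal to a root of $\psi_{j,k}$ over $\fph=\fp(\bar\alpha)$; hence $\deg\phi\cdot\deg\psi_{j,k}\mid f(L/\Q_p)$. Combining, $[L:\Q_p]\ge e\cdot\deg\phi\cdot\deg\psi_{j,k}=\deg H$, which both forces $H$ to be irreducible and gives equality in the $e$, $f$ bounds, matching the stated values of $e(K_{j,k}/\Q_p)$ and $f(K_{j,k}/\Q_p)$.
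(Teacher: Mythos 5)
First, a point of reference: the paper does not prove this theorem at all. It is recalled as a classical result of Ore, stated with a terminal $\Box$ and the proof deferred to \cite{GMN}, Section 1. So your proposal has to stand on its own, and it cannot quite do so. Your architecture --- reduce $f$ to $F$ via multiplicativity of principal polygons and residual polynomials, split $F$ by slopes, refine each $G_j$ by the factorization of $R_{\la_j}$, then count $e$ and $f$ in the multiplicity-one case --- is the correct one, and the first and last layers are essentially sound (the degree bound $[L:\Q_p]\ge e(S_j)\cdot\deg\phi\cdot\deg\psi_{j,k}=\deg H_{j,k}$ does force irreducibility and equality). The genuine gap is the step you yourself flag: the ``second Hensel-type lemma'' lifting $R_{\la}(G_j)\sim\prod_k\psi_{j,k}^{n_{j,k}}$ to $G_j=\prod_k H_{j,k}$ with prescribed one-sided polygons and residual polynomials. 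That statement \emph{is} the Theorem of the residual polynomial, i.e.\ the entire content of Ore's theorem beyond the vertex splitting; asserting that a $(p,\phi)$-graded Hensel lemma ``produces'' it names the theorem rather than proving it, and the difficulty is real because $\phi(x)$ is not a prime of any complete local ring over which $F$ splits, so the two-dimensional filtration does not come with a ready-made Hensel machinery. Note also that your irreducibility paragraph silently reuses the missing lemma: the claim that $\phi(\alpha)^{e}/p^{h}$ reduces to a root of $\psi_{j,k}$ is precisely the assertion that the residual elements of the roots of $H_{j,k}$ are the roots of its residual polynomial.

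The modern proof that the paper points to takes a different route for exactly this step, and you may prefer it: factor $F$ into monic irreducibles over $\Z_p$; each irreducible factor has a one-sided $\phi$-polygon because its roots are conjugate and hence share the value $v_p(\phi(\alpha))=h/e$; attach to each root the residual element $\overline{\phi(\alpha)^{e}/p^{h}}$ in the residue field, which contains $\fph$ via $\bar\alpha$; show (this is the one computation that must be done by hand, using the theorem of the product) that $R_{\la}(F)$ is, up to a nonzero constant, the product of the minimal polynomials over $\fph$ of these residual elements, counted over the irreducible factors. The polynomials $G_j$ and $H_{j,k}$ are then \emph{defined} by grouping the irreducible factors according to $v_p(\phi(\alpha))$ and to the $\fph$-conjugacy class of the residual element, Galois stability giving coefficients in $\Z_p$. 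To complete your proposal you must either carry out this root-theoretic argument or actually state and prove the graded Hensel lemma (Ore's original decomposition theorem), which is a substantial piece of work in its own right.
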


\begin{definition}\label{pregular}
Let $\phi(x)\in\Z[x]$ be a monic polynomial,  irreducible modulo $p$. We say that $f(x)$ is \emph{$\phi$-regular} if for every side of $N_{\phi}^-(f)$, the residual polynomial attached to the side is separable.

Choose monic polynomials $\phi_1(x),\dots,\phi_t(x)\in\Z[x]$ whose reduction modulo $p$ are the different irreducible factors of $\overline{f(x)}$ in $\F_p[x]$.
We say that $f(x)$ is $p$-regular with respect to this choice if $f(x)$ is $\phi_i$-regular for every $1\le i\le t$.
\end{definition}

Although this concept depends on the choice of $\phi_1(x),\dots,\phi_t(x)$, in the sequel we shall just say ``$f(x)$ is $p$-regular", taking for granted that there is an implicit choice of these liftings to $\Z[x]$ of the monic irreducible polynomials of $\F_p[x]$ that divide $\overline{f(x)}$. Since the length of $N_{\phi_i}^-(f)$ is equal to  $\ord_{\overline{\phi_i}}(\overline{f})$, in practice we need only to check $\phi_i$-regularity for those $\phi_i(x)$ with $\ord_{\overline{\phi_i}}(\overline{f})>1$.

If $f(x)$ is $p$-regular, Theorem \ref{ore} provides the complete facto\-rization of $f(x)$ into a product of irreducible polynomials in $\Z_p[x]$, or equivalently, the decomposition of $p$ into a product of prime ideals of $K$. 
Moreover, in the $p$-regular case the $p$-index of $f(x)$ is also determined by the shape of the different $\phi$-Newton polygons.

\begin{theorem}[\bf Theorem of the index]\label{index}
$\ind_p(f)\ge\ind_{\phi_1}(f)+\cdots+\ind_{\phi_t}(f)$, and
equality holds if $f(x)$ is $p$-regular.\hfill{$\Box$}
\end{theorem}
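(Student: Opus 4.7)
My strategy is to expand the fundamental identity $2\ind_p(f)=v_p(\disc f)-v_p(\disc K)$ into local contributions and then match these against the geometric quantities $\ind_{\phi_i}(f)$ side-by-side.

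\textbf{Step 1 (Reduction to a single $\phi_i$).} Because $\phi_i\not\equiv\phi_{i'}\md{p}$ for $i\neq i'$, the Hensel factors $F_i,F_{i'}$ satisfy $v_p(\op{Res}(F_i,F_{i'}))=0$, which gives the clean decomposition $v_p(\disc f)=\sum_i v_p(\disc F_i)$. On the arithmetic side, $\ZK\otimes\Z_p=\prod_\p\oo_\p$ and $\pp=\coprod_i\pp_{\phi_i}$, whence $v_p(\disc K)=\sum_i\sum_{\p\in\pp_{\phi_i}}v_p(\disc(K_\p/\Qp))$. Hence it is enough to prove, for each $i$, that
$$
v_p(\disc F_i)-\sum_{\p\in\pp_{\phi_i}}v_p(\disc(K_\p/\Qp))\;\ge\;2\ind_{\phi_i}(f),
$$
with equality in the $\phi_i$-regular case.

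\textbf{Step 2 (Reduction to a single slope).} Fix $\phi=\phi_i$ and apply Ore's Theorem \ref{ore} to write $F=F_i=\prod_j G_j$ along the sides $S_j$ of slope $\la_j$. The set of lattice points under $N=\npp(f)$ partitions naturally into strips, one per side, so $\ind_\phi(f)=\sum_j I(S_j)$ for an explicit per-side count $I(S_j)$. Because the slopes are distinct, roots of $G_j$ and $G_{j'}$ have different $\phi$-adic distances, so the contribution of $v_p(\op{Res}(G_j,G_{j'}))$ to $v_p(\disc F)$ can be computed exactly from the slopes and lengths and matches the "cross" part of $\sum_j I(S_j)$. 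This isolates the problem to a single side, i.e.\ to $G=G_j$ of one-sided polygon with slope $\la$.

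\textbf{Step 3 (Residual splitting and the one-sided case).} Apply the second half of Ore's theorem to factor $G=\prod_k H_k$ according to $R_\la(f)=\prod_k\psi_k^{n_k}$. For $k\neq k'$, the residues $\rd(\phi(\alpha)^e/p^h)$ at roots $\alpha$ of $H_k$ and $H_{k'}$ lie in distinct irreducible factors of $R_\la(f)$, so the cross-resultants are again computable in terms of Newton-polygon data. One is finally reduced to estimating $v_p(\disc H_k)-\sum_{\p}v_p(\disc(K_\p/\Qp))$ for a polynomial whose residual polynomial is a power of a single irreducible $\psi$. In the $\phi$-regular case $n_k=1$, $H_k$ is irreducible with $e(K_\p/\Qp)=e(S)$ and $f(K_\p/\Qp)=m\deg\psi_k$, and a direct computation of $v_p(H_k'(\alpha))$ via the $\phi$-adic expansion recovers precisely $2I(S_j)$, yielding equality. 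In the non-regular case one only knows $H_k$ further decomposes in some unknown way, and Ore's original argument extracts a lower bound matching $2I(S_j)$ by tracking only what the first-order Newton polygon sees.

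\textbf{Main obstacle.} The delicate point is the explicit valuation of $v_p(H_k'(\alpha))$ at a root $\alpha$, which controls the local different and hence $v_p(\disc(K_\p/\Qp))$. This requires a careful analysis of the $\phi$-adic expansion of $H_k'(x)$ against the polygon of $H_k$, using that $v_p(\phi(\alpha))=|\la|$ and that the residues modulo the appropriate ideal of $\phi(\alpha)^e/p^h$ satisfy $\psi_k$. In the regular case this computation is tight; in the non-regular case it is precisely the discrepancy between first-order and higher-order Newton polygon data, which is the obstruction that motivates Theorem \ref{main2} and the higher-order theory developed in \cite{GMN2}.
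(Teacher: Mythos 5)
First, a point of reference: the paper does not prove this theorem at all — it is quoted from Ore, with \cite[Sec.\,1]{GMN} cited for a modern proof — so there is no internal argument to compare yours with. Your Steps 1 and 2 follow the classical Ore route and are sound in outline: the cross-resultants $\op{Res}(F_i,F_{i'})$ are $p$-adic units because the $\overline{\phi_i}$ are pairwise coprime, and for one-sided factors $G_j,G_{j'}$ of distinct slopes the value $v_p(G_j(\beta))$ at a root $\beta$ of $G_{j'}$ is attained at a \emph{unique} term of the $\phi$-adic expansion, giving $v_p(\op{Res}(G_j,G_{j'}))=m\,\ell(S_j)\,\ell(S_{j'})\min\{|\la_j|,|\la_{j'}|\}$ exactly, which is what the rectangular ``cross'' blocks of lattice points count; coprimality of $\psi_k$ and $\psi_{k'}$ plays the same role in Step 3. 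Note also that the inequality half of the theorem needs none of this: it already follows from Proposition \ref{denominator} together with Lemma \ref{numerators}, since the module generated by the $\alpha_{i,j,k}/p^{\lfloor y_{i,j}\rfloor}$ sits between $\Z[\t]$ and $\ZK$ and contains $\Z[\t]$ with index $p^{\,\ind_{\phi_1}(f)+\cdots+\ind_{\phi_t}(f)}$.

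The genuine gap is in your base case. You propose to obtain $\ind(H_k)$ for an irreducible factor $H_k$ from ``a direct computation of $v_p(H_k'(\alpha))$,'' asserting that this valuation ``controls the local different and hence $v_p(\disc(K_\p/\Qp))$.'' It does not: one has $\deg(H_k)\,v_p(H_k'(\alpha))=v_p(\disc H_k)=2\ind(H_k)+v_p(\disc(K_\p/\Qp))$, because $(H_k'(\alpha))$ is the different $\mathfrak{d}_{K_\p/\Qp}$ times the conductor of $\Z_p[\alpha]$ in the maximal order, and the norm of that conductor is $p^{2\ind(H_k)}$. So $v_p(H_k'(\alpha))$ only gives you the \emph{sum} of the two quantities you need to separate, and the argument is circular unless $v_p(\disc(K_\p/\Qp))$ is known independently — which, under wild ramification, it is not determined by $e(S_j)$ and $\deg\phi\cdot\deg\psi_{j,k}$. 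Worse, $v_p(H_k'(\alpha))$ itself is not computable from the polygon data: the polynomials $x^2+2$ and $x^2-2x+2$ over $\Q_2$ have identical $x$-Newton polygons and residual polynomials and are both $2$-regular, yet $v_2(H'(\alpha))$ equals $3/2$ for the first and $1$ for the second (the local differents have exponents $3$ and $2$); only the difference $v_2(\disc H)-v_2(\disc K_\p)$ is an invariant of the polygon. The standard repair — and the one consistent with the rest of this paper — is to avoid discriminants in the base case altogether and exhibit an explicit $\Z_p$-basis of the maximal order of the form $\alpha^a\phi(\alpha)^b/p^{\lfloor bh/e\rfloor}$, checking via a uniformizer $\phi(\alpha)^{e_0}/p^{h_0}$ (with $e_0h-h_0e=\pm1$) and a residue-field count that these elements generate it; the sum of the exponents $\lfloor bh/e\rfloor$ then reproduces the lattice-point count. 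Your closing appeal to ``Ore's original argument'' for the lower bound in the non-regular subcase is likewise not a proof, but the circularity above is the step that must actually be repaired.
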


\section{Computation of a $p$-integral basis in the regular case}\label{secMain}
We keep the above notations for $f(x)$, $\t$, $K$, $\pp$. Recall the decomposition:
$$
f(x)\equiv \phi_1(x)^{\ell_1},\dots,\phi_t(x)^{\ell_t} \md{p},
$$
for a certain choice of monic polynomials $\phi_i(x)\in\Z[x]$ of degree $m_i$, whose reduction modulo $p$ are the different irreducible factors of $\overline{f(x)}$ in $\F_p[x]$. We fix one of these monic polynomials $\phi(x)=\phi_i(x)$.

\begin{definition}
The \emph{quotients attached to the  $\phi$-adic development} (\ref{phiadic}) of $f(x)$ are, by definition, the different quotients $q_1(x),\dots,q_r(x)$ that are obtained along the computation of the coefficients of the development:
$$
\begin{array}{rcl}
f(x)&=&\phi(x)q_1(x)+a_0(x),\\
q_1(x)&=&\phi(x)q_2(x)+a_1(x),\\
\cdots&&\cdots\\
q_{r-1}(x)&=&\phi(x)q_r(x)+a_{r-1}(x),\\
q_r(x)&=&\phi(x)\cdot 0+a_r(x)=a_r(x).
\end{array}
$$
\end{definition}

Equivalently, $q_j(x)$ is the quotient of the division of $f(x)$ by $\phi(x)^j$; we denote by $r_j(x)$ the residue of this division. Thus, for all $1\le j\le r$ we have 
\begin{equation}\label{division}
f(x)=r_j(x)+q_j(x)\phi(x)^j.
\end{equation}
\begin{equation}\label{residue}
r_j(x)=a_0(x)+a_1(x)\phi(x)+\cdots+a_{j-1}(x)\phi(x)^{j-1}.
\end{equation}
\begin{equation}\label{quotient}
q_j(x)=a_j(x)+a_{j+1}(x)\phi(x)+\cdots+a_r(x)\phi(x)^{r-j}.
\end{equation}

Our first aim is to compute, for some quotients $q(x)$ attached to the $\phi$-adic development of $f(x)$, the highest power $p^a$ of $p$ such that $q(\t)/p^a$ is integral.

Let $N_\phi^-(f)=S_1+\cdots+S_g$ be the principal $\phi$-Newton polygon of $f(x)$, and denote $\ell=\ell(N_\phi^-(f))=\ord_{\overline{\phi}}(\overline{f})$. For any integer abscissa $0 \le j\le \ell$, let $y_j\in\Q$ be the ordinate of the point of $N_{\phi}^-(f)$ of abscissa $j$. These rational numbers form an strictly decreasing sequence, and $y_\ell=0$. Note that, by Definition \ref{phindex},
$$
\ind_{\phi}(f)=\lfloor y_1\rfloor+\cdots+\lfloor y_{\ell-1}\rfloor.
$$

\begin{lemma}\label{convex}
For any integer abscissa $0\le j\le \ell$, let  $1\le s_j\le g$ be the greatest index such that the projection of $S_{s_j}$ to the $x$-axis contains $j$. Then, 
$$
y_j\le y_k+(k-j)|\la_{s_j}|, \quad \forall\,0 \le k\le \ell.
$$  
\end{lemma}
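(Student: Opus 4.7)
The plan is to view the polygon as a convex piecewise-linear function $y\colon[0,\ell]\to\R$ (extended linearly between integer abscissas), whose slopes $\la_1<\la_2<\cdots<\la_g$ are by definition increasing. The claim then says that $-|\la_{s_j}|=\la_{s_j}$ is a supporting slope at $j$, i.e.\ the line through $(j,y_j)$ with slope $\la_{s_j}$ lies weakly below the polygon. I would prove this by splitting into the cases $k\ge j$ and $k<j$, and in each case comparing $y_j-y_k$ to a sum of slope-times-length contributions over the sides of $N$ between $k$ and $j$.

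For $k\ge j$, the key observation is that by the very definition of $s_j$ (the greatest index such that the projection of $S_{s_j}$ contains $j$), every side of $N$ whose projection meets $[j,k]$ has index $\ge s_j$, hence slope $\ge\la_{s_j}$. Writing $y_k-y_j$ as the sum of $\la_i\cdot(\text{length contribution})$ over these sides, with total length $k-j$, one gets $y_k-y_j\ge\la_{s_j}(k-j)=-|\la_{s_j}|(k-j)$, which rearranges to the claimed inequality.

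For $k<j$, one uses the mirror observation: if $j$ is interior to $S_{s_j}$ then the slope on $[k,j]$ just to the left of $j$ is exactly $\la_{s_j}$, while if $j$ is the left vertex of $S_{s_j}$ then the slope just to the left is $\la_{s_j-1}<\la_{s_j}$; in both cases, every side meeting $[k,j]$ has slope $\le\la_{s_j}$. Thus $y_j-y_k$ is a sum of slopes (all $\le\la_{s_j}$) times positive lengths summing to $j-k$, giving $y_j-y_k\le\la_{s_j}(j-k)=(k-j)|\la_{s_j}|\cdot(-1)\cdot(-1)=(k-j)\la_{s_j}$; rewriting $\la_{s_j}=-|\la_{s_j}|$ yields $y_j\le y_k+(k-j)|\la_{s_j}|$.

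The only delicate point is bookkeeping at boundary abscissas (where $j$ is a vertex, or $j\in\{0,\ell\}$), to make sure the definition ``greatest index such that the projection of $S_{s_j}$ contains $j$'' singles out exactly the correct supporting slope on the appropriate side. This is a routine case check rather than a real obstacle: at an interior vertex between $S_s$ and $S_{s+1}$ one has $s_j=s+1$, so $\la_{s_j}$ is precisely the right-hand derivative of $y$ at $j$, and for a convex function the right-hand derivative serves as a subgradient valid for all $k$ on both sides of $j$, which is exactly the content of the lemma.
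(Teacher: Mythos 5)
Your argument is correct and is essentially the paper's own proof: the paper disposes of this lemma in one line as ``an immediate consequence of the convexity of $N_\phi^-(f)$'' (illustrated by Figure 2), and what you have written is precisely that convexity/supporting-slope argument carried out in detail, with the right treatment of the vertex case via the definition of $s_j$ as the \emph{greatest} admissible index. The only blemish is a harmless sign slip in the chain for $k<j$ (the intermediate ``$=(k-j)\la_{s_j}$'' should read $(k-j)|\la_{s_j}|$), which does not affect the stated conclusion.
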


\begin{proof}
This is an immediate consequence of the convexity of $N_\phi^-(f)$. Figure 2 illustrates the two different situations that arise according to $k\ge j$ or $k<j$.
\end{proof}
\begin{center}
\setlength{\unitlength}{5.mm}
\begin{picture}(12,8)
\put(5.85,4.85){$\bullet$}\put(7.85,1.85){$\bullet$}\put(10.85,.85){$\bullet$}
\put(3,0){\line(1,0){9}}\put(4,-1){\line(0,1){8}}
\put(6,5){\line(2,-3){2}}\put(6,5.03){\line(2,-3){2}}
\put(8,2){\line(3,-1){3}}\put(8,2.03){\line(3,-1){3}}
\multiput(7.05,5.7)(.1,-.15){30}{\mbox{\tiny .}}
\put(9.9,-.7){\begin{footnotesize}$k$\end{footnotesize}}
\put(6.9,-.7){\begin{footnotesize}$j$\end{footnotesize}}
\put(3.6,-.7){\begin{footnotesize}$0$\end{footnotesize}}
\put(3.2,3.4){\begin{footnotesize}$y_j$\end{footnotesize}}
\put(3.2,1.2){\begin{footnotesize}$y_k$\end{footnotesize}}
\put(-.5,5.7){\begin{footnotesize}$y_k+(k-j)|\la_{s_j}|$\end{footnotesize}}
\multiput(7,-.1)(0,.25){28}{\vrule height2pt}
\multiput(10.1,-.1)(0,.25){6}{\vrule height2pt}
\multiput(3.9,5.8)(.25,0){13}{\hbox to 2pt{\hrulefill }}
\multiput(3.9,1.3)(.25,0){25}{\hbox to 2pt{\hrulefill }}
\multiput(3.9,3.5)(.25,0){13}{\hbox to 2pt{\hrulefill }}
\end{picture}
\begin{picture}(12,8)
\put(5.85,4.85){$\bullet$}\put(7.85,1.85){$\bullet$}\put(10.85,.85){$\bullet$}
\put(3,0){\line(1,0){9}}\put(4,-1){\line(0,1){8}}
\put(6,5){\line(2,-3){2}}\put(6,5.03){\line(2,-3){2}}
\put(8,2){\line(3,-1){3}}\put(8,2.03){\line(3,-1){3}}
\multiput(7,3.45)(.12,-.04){26}{\mbox{\tiny .}}
\put(9.8,-.7){\begin{footnotesize}$j$\end{footnotesize}}
\put(6.9,-.7){\begin{footnotesize}$k$\end{footnotesize}}
\put(3.6,-.7){\begin{footnotesize}$0$\end{footnotesize}}
\put(3.2,3.45){\begin{footnotesize}$y_k$\end{footnotesize}}
\put(3.2,1.2){\begin{footnotesize}$y_j$\end{footnotesize}}
\put(-.5,2.4){\begin{footnotesize}$y_k+(k-j)|\la_{s_j}|$\end{footnotesize}}
\multiput(7,-.1)(0,.25){15}{\vrule height2pt}
\multiput(10,-.1)(0,.25){20}{\vrule height2pt}
\multiput(3.9,2.5)(.25,0){25}{\hbox to 2pt{\hrulefill }}
\multiput(3.9,1.3)(.25,0){25}{\hbox to 2pt{\hrulefill }}
\multiput(3.9,3.5)(.25,0){13}{\hbox to 2pt{\hrulefill }}
\end{picture}
\end{center}\medskip
\begin{center}
Figure 2
\end{center}

\begin{proposition}\label{denominator}
For all $\ 0< j<\ell$, we have $\ q_j(\t)/p^{\lfloor y_j\rfloor}\in\Z_K$.
\end{proposition}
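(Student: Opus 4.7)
The plan is to establish the equivalent local statement $v_\p(q_j(\t))\ge e(\p/p)\lfloor y_j\rfloor$ for every prime $\p$ of $K$ above $p$. First I would set $\mu:=v_p(\phi(\t^\p))$: by Theorem \ref{ore}(3) together with the splitting $\pp=\coprod\pp_{\phi_i}$, this quantity equals $0$ if $\p\notin\pp_\phi$, and equals $|\la_{s(\p)}|$ for a uniquely determined side index $s(\p)\in\{1,\dots,g\}$ if $\p\in\pp_\phi$. Writing $a_k(x)=p^{u_k}b_k(x)$ with $b_k(x)\in\Z[x]$ then yields the uniform estimate $v_\p(a_k(\t))\ge e(\p/p)u_k$ (with equality for $\p\in\pp_\phi$, by Lemma \ref{petit}), while (\ref{localglobal}) gives $v_\p(\phi(\t))=e(\p/p)\mu$.

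With these ingredients in place I would derive two complementary lower bounds. From (\ref{quotient}) I get
\[
(\mathrm{I})\qquad v_\p(q_j(\t))\ge e(\p/p)\min_{j\le k\le r}\bigl(u_k+(k-j)\mu\bigr),
\]
and combining $f(\t)=0$ with (\ref{division}) and (\ref{residue}) to rewrite $q_j(\t)\phi(\t)^j=-r_j(\t)$ gives
\[
(\mathrm{II})\qquad v_\p(q_j(\t))\ge e(\p/p)\min_{0\le k<j}\bigl(u_k+(k-j)\mu\bigr).
\]

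To conclude, I would split according to whether $\mu\ge|\la_{s_j}|$ or $\mu<|\la_{s_j}|$. In the first case I apply (I): for $j\le k\le\ell$ the bound $u_k\ge y_k$ combined with Lemma \ref{convex} gives $u_k+(k-j)\mu\ge y_j+(k-j)(\mu-|\la_{s_j}|)\ge y_j$, while for $k>\ell$ the trivial $u_k\ge 0$ together with Lemma \ref{convex} evaluated at $k=\ell$ (which reads $y_j\le(\ell-j)|\la_{s_j}|$) yields $u_k+(k-j)\mu\ge(k-j)\mu\ge y_j$. In the second case I apply (II): for $0\le k<j$, the bound $u_k\ge y_k$ combined with Lemma \ref{convex} gives $u_k+(k-j)\mu\ge y_j+(j-k)(|\la_{s_j}|-\mu)\ge y_j$. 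In either case, $v_\p(q_j(\t))\ge e(\p/p)y_j\ge e(\p/p)\lfloor y_j\rfloor$. The main obstacle is that neither bound is uniformly tight---(I) degenerates when $\mu$ is a gentle slope (in particular when $\mu=0$) and (II) degenerates when $\mu$ is a steep slope---so one must identify $|\la_{s_j}|$ as the precise cut-off at which one bound takes over from the other, with Lemma \ref{convex} supplying the sharp convex estimate on each side of $j$.
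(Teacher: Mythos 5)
Your proof is correct and follows essentially the same route as the paper's: the same dichotomy $\mu\ge|\la_{s_j}|$ versus $\mu<|\la_{s_j}|$, using the development (\ref{quotient}) in the first case and the identity $q_j(\t)\phi(\t)^j=-r_j(\t)$ from (\ref{division})--(\ref{residue}) in the second, with Lemma \ref{convex} closing both estimates. Your explicit treatment of the terms with $k>\ell$ in bound (I) is a welcome touch of care, since there the ordinate $y_k$ of Lemma \ref{convex} is not defined and the paper glosses over this.
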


\begin{proof}
Fix $0< j<\ell$. We shall show that $v_{\p}(q_j(\t))\ge
e(\p/p)y_j$ for all $\p\in\pp$.

Let $\la_1<\cdots<\la_g$ be the slopes of the different sides of
$N_\phi^-(f)=S_1+\cdots+S_g$. By the Theorem of Ore, the set
$\pp_{\phi}$ splits into the disjoint union of the $g$ subsets
$$\pp_{S_s}=\{\p\in\pp\tq v_{\p}(\phi(\t))=e(\p/p)|\la_s|\}, \quad 1\le s\le g.$$
 Let $1\le s_j\le g$ be the greatest
integer such that the projection of $S_{s_j}$ to the horizontal axis contains the abscissa $j$.

 Suppose first that
$\p\in\pp_{S_s}$ for some $s\le s_j$; in this case, Lemma \ref{petit},  (\ref{localglobal}) and the Theorem of Ore show that, for all $k\ge j$:
\begin{align*}
v_{\p}(a_k(\t)\phi(\t)^{k-j})=&\ e(\p/p)(u_k+(k-j)|\la_s|)\ge e(\p/p)(u_k+(k-j)|\la_{s_j}|)\\&\ \ge e(\p/p)(y_k+(k-j)|\la_{s_j}|)  \ge e(\p/p)y_j,
\end{align*}the last inequality by Lemma \ref{convex}.
Hence, after the substitution $x=\t$, each summand in (\ref{quotient}) has $v_{\p}$-value greater than or equal to $e(\p/p)y_j$, so that $v_{\p}(q_j(\t))\ge
e(\p/p)y_j$ as well.

Suppose now that either $\p\nmid\phi(\t)$ or $\p\in \pp_{S_s}$ for
some $s>s_j$; that is, $v_{\p}(\phi(\t))=e(\p/p)\mu$ for some
$\mu<|\la_{s_j}|$ ($\mu=0$ if $\p\nmid\phi(\t)$ and $\mu=|\la_s|$ if $\p\in
\pp_{S_s}$). In this case, we use the identities (\ref{division}) and (\ref{residue}),
which imply, again by Lemma \ref{petit} and the Theorem of Ore:
\begin{align*}
v_\p(q_j(\t))=&\ v_p(r_j(\t))-je(\p/p)\mu\ge\min_{0\le k<j}\{v_{\p}(a_k(\t)\phi(\t)^k)\}-je(\p/p)\mu\\ 
=&\ v_{\p}(a_{k_0}(\t)\phi(\t)^{k_0})-je(\p/p)\mu=e(\p/p)(u_{k_0}-(j-k_0)\mu)\\
>&\ e(\p/p)(u_{k_0}-(j-k_0)|\la_{s_j}|)\ge e(\p/p)(y_{k_0}-(j-k_0)|\la_{s_j}|)\ge e(\p/p)y_j,
\end{align*}the last inequality by Lemma \ref{convex}.
\end{proof}

From now on we shall denote by $q_{i,1}(x),\,\dots,\,q_{i,\ell_i}(x)\in\Z[x]$ the first $\ell_i$ quotients attached to the $\phi_i$-adic development of $f(x)$. Also, we denote by $r_{i,j}(x)\in\Z[x]$ the residue of the division of $q_{i,j}(x)$ by $\phi_i(x)^j$, as in (\ref{division}). Finally, we denote by $y_{i,1},\,\dots,\,y_{i,\ell_i}\in\Q$ the ordinates of the points lying on $N_{\phi_i}^-(f)$ with integer abscissa. 

\begin{lemma}\label{ordphi}
For all $1\le I\le t$, the quotients $q_{i,1}(x),\,\dots,\,q_{i,\ell_i}(x)$ satisfy
$$
\ord_{\overline{\phi_I}}(\overline{q_{i,j}})=\left\{\begin{array}{ll}
 \ell_i-j,&\qquad \mbox{if }I=i,\\
\ell_I,&\qquad \mbox{if }I\ne i.
\end{array}
\right.
$$
\end{lemma}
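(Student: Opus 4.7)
The plan is to pass to $\F_p[x]$ and exploit the uniqueness of the $\overline{\phi_i}$-adic expansion there. Writing the $\phi_i$-adic development $f(x) = a_0(x) + a_1(x)\phi_i(x) + \cdots + a_r(x)\phi_i(x)^r$ with $\deg a_k < m_i$, reduction modulo $p$ yields
$$\overline{f}(x) = \overline{a_0}(x) + \overline{a_1}(x)\overline{\phi_i}(x) + \cdots + \overline{a_r}(x)\overline{\phi_i}(x)^r.$$
Since $\overline{\phi_i}$ is irreducible in $\F_p[x]$ and each $\overline{a_k}$ has degree strictly less than $\deg \overline{\phi_i}$, this is the unique $\overline{\phi_i}$-adic expansion of $\overline{f}$ in $\F_p[x]$. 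On the other hand, by construction $\overline{f} = \overline{\phi_i}^{\ell_i}\,g$ with $g := \prod_{I\ne i}\overline{\phi_I}^{\ell_I}$ coprime to $\overline{\phi_i}$. Comparing these two expressions, uniqueness forces $\overline{a_k} = 0$ for all $k < \ell_i$, and moreover $g = \sum_{k\ge \ell_i}\overline{a_k}\,\overline{\phi_i}^{\,k-\ell_i}$.

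Now fix $1\le j\le \ell_i$. The formula (\ref{quotient}) together with the vanishing just established gives
$$\overline{q_{i,j}}(x) = \sum_{k=j}^{r}\overline{a_k}(x)\,\overline{\phi_i}(x)^{k-j} = \overline{\phi_i}^{\,\ell_i-j}\sum_{k\ge \ell_i}\overline{a_k}\,\overline{\phi_i}^{\,k-\ell_i} = \overline{\phi_i}^{\,\ell_i-j}\cdot g.$$
Both cases of the lemma fall out immediately from this factorization. If $I = i$, then $g$ is coprime to $\overline{\phi_i}$, so $\ord_{\overline{\phi_i}}(\overline{q_{i,j}}) = \ell_i - j$. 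If $I \ne i$, then $\overline{\phi_i}^{\ell_i - j}$ is coprime to $\overline{\phi_I}$, so $\ord_{\overline{\phi_I}}(\overline{q_{i,j}}) = \ord_{\overline{\phi_I}}(g) = \ell_I$.

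There is no substantive obstacle: the whole argument hinges on the uniqueness of the $\overline{\phi_i}$-adic expansion in $\F_p[x]$, which is a routine consequence of Euclidean division by the irreducible polynomial $\overline{\phi_i}$ together with the degree bound $\deg \overline{a_k} < \deg \overline{\phi_i}$.
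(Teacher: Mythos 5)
Your proof is correct and follows essentially the same route as the paper's: both arguments reduce the $\phi_i$-adic development modulo $p$, use $\ell_i=\ord_{\overline{\phi_i}}(\overline{f})$ to see that $\overline{a_k}=0$ for $k<\ell_i$, and then read off the orders from the identity $\overline{q_{i,j}}=\sum_{k\ge j}\overline{a_k}\,\overline{\phi_i}^{\,k-j}$ (the paper handles the case $I\ne i$ via $\overline{f}=\overline{q_{i,j}}\,\overline{\phi_i}^{\,j}$, which is the same factorization you exhibit explicitly). No gaps.
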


\begin{proof}
The first coefficient of the $\phi_i$-development of $f(x)$ that is not divisible by $p$ is  $a_{\ell_i}(x)$. Hence, $\ord_{\overline{\phi_i}}(\overline{q_{i,j}})=\ell_i-j$ by (\ref{quotient}), and $\overline{r_{i,j}}=0$ for all $i$ and all $1\le j\le \ell_i$ by (\ref{residue}). Therefore, if $I\ne i$, then $\ell_I=\ord_{\overline{\phi_I}}(f)=\ord_{\overline{\phi_I}}(\overline{q_{i,j}})$ by
 (\ref{division}).
\end{proof}

\begin{lemma}\label{numerators}
The following elements in $\Z[\t]$ are a $\zp$-basis of $\zp[\t]$:
$$
\alpha_{i,j,k}:=q_{i,j}(\t)\,\t^k\in\Z[\t],\quad 1\le i\le t,\ 1\le j\le\ell_i,\ 0\le k<m_i.
$$  
\end{lemma}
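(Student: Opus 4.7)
The plan is to reduce the claim to a mod-$p$ linear algebra statement. The family has $\sum_{i=1}^t\ell_im_i=n$ elements, matching the $\zp$-rank of $\zp[\t]$. Since $\zp$ is local with residue field $\fp$, by Nakayama it suffices to show that the reductions $\overline{\alpha_{i,j,k}}$ form an $\fp$-basis of $\fp[\t]\cong\fp[x]/\overline{f(x)}$; by the dimension count this is equivalent to $\fp$-linear independence.

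The key step is to identify $\overline{q_{i,j}(x)}$ explicitly. Since $q_{i,j}(x)$ is the monic quotient of $f(x)$ by $\phi_i(x)^j$, it is monic of degree $n-jm_i$. Lemma \ref{ordphi} gives the multiplicity of each $\overline{\phi_I}$ in $\overline{q_{i,j}}$: it is $\ell_I$ for $I\ne i$ and $\ell_i-j$ for $I=i$. Because $(\ell_i-j)m_i+\sum_{I\ne i}\ell_Im_I=n-jm_i$ exhausts the degree and both sides are monic,
$$
\overline{q_{i,j}(x)}=\overline{\phi_i(x)}^{\,\ell_i-j}\prod_{I\ne i}\overline{\phi_I(x)}^{\,\ell_I}.
$$
Consequently, in $\fp[\t]$,
$$
\overline{\alpha_{i,j,k}}=\overline{\phi_i(\t)}^{\,\ell_i-j}\,\t^{k}\cdot\prod_{I\ne i}\overline{\phi_I(\t)}^{\,\ell_I}.
$$

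Next I would apply the Chinese Remainder Theorem to decompose
$$
\fp[\t]\ \cong\ \prod_{I=1}^{t}\fp[x]\big/\overline{\phi_I(x)}^{\,\ell_I}.
$$
For $I\ne i$, the factor $\overline{\phi_I}^{\,\ell_I}$ appearing in $\overline{\alpha_{i,j,k}}$ kills its $I$-th component. In the $i$-th component, the product $\prod_{I'\ne i}\overline{\phi_{I'}(x)}^{\,\ell_{I'}}$ is coprime to $\overline{\phi_i}$ and hence a unit $u_i$ modulo $\overline{\phi_i(x)}^{\,\ell_i}$, so $\overline{\alpha_{i,j,k}}$ projects to $u_i\cdot\overline{\phi_i(x)}^{\,\ell_i-j}\,x^{k}$ in that component. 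The problem block-diagonalizes: for each $i$ one must show that the $\ell_im_i$ elements $\{\overline{\phi_i(x)}^{\,\ell_i-j}x^{k}:1\le j\le\ell_i,\,0\le k<m_i\}$ form an $\fp$-basis of $\fp[x]/\overline{\phi_i(x)}^{\,\ell_i}$.

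Setting $a=\ell_i-j$, this family becomes $\{\overline{\phi_i(x)}^{a}x^{k}:0\le a<\ell_i,\,0\le k<m_i\}$, which is a basis by the standard $\overline{\phi_i}$-adic division algorithm: every polynomial of degree $<\ell_im_i$ admits a unique expansion $\sum_{a=0}^{\ell_i-1}b_a(x)\,\overline{\phi_i(x)}^{a}$ with $\deg b_a<m_i$. No step is genuinely hard; the main subtlety is the explicit identification of $\overline{q_{i,j}}$ as a specific monic product of the $\overline{\phi_I}$ via Lemma \ref{ordphi} together with a degree count. Once that is established, CRT and the division algorithm close the argument.
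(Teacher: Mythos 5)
Your proof is correct. It rests on the same key input as the paper's --- Lemma \ref{ordphi}, i.e.\ the exact values $\ord_{\overline{\phi_I}}(\overline{q_{i,j}})$ --- but the way you exploit it is genuinely different. The paper proves $\F_p$-linear independence of the $\overline{q_{i,j}(x)x^k}$ directly by a combinatorial/valuation argument: it groups a vanishing linear combination into blocks $A_{i,j}(x)$, orders the indices, and derives a contradiction from the $\overline{\phi_{i_0}}$-order of the extremal nonzero block; it then separately checks that $p$ does not divide the index $(\Z[\t]\colon M)$. You instead observe that Lemma \ref{ordphi} plus a degree count pins down $\overline{q_{i,j}}$ exactly as $\overline{f}/\overline{\phi_i}^{\,j}$, pass to $\F_p[x]/(\overline{f})$ via Nakayama over the local ring $\zp$, and block-diagonalize through the Chinese Remainder Theorem, reducing everything to the standard $\overline{\phi_i}$-adic expansion in each primary component $\F_p[x]/\overline{\phi_i}^{\,\ell_i}$. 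Your route is structurally cleaner: it exhibits the reductions $\overline{\alpha_{i,j,k}}$ as (unit multiples of) the natural basis adapted to the primary decomposition, shows spanning directly rather than only independence, and replaces the ad hoc index computation by the standard ``$n$ generators of a free rank-$n$ module form a basis'' fact. What the paper's argument buys in exchange is that it never invokes CRT or the ring structure of $\F_p[\t]$, only orders of vanishing of the $\overline{q_{i,j}}$; this more valuation-theoretic phrasing is closer in spirit to the higher-order Newton polygon machinery the authors generalize to later. Both proofs are complete; the only point worth making explicit in yours is the (easy) fact that the $\ell_i m_i$ elements $\overline{\phi_i}^{\,a}x^k$ with $0\le a<\ell_i$, $0\le k<m_i$ are independent because a nonzero $\sum_a b_a(x)\overline{\phi_i}^{\,a}$ with $\deg b_a<m_i$ and $a<\ell_i$ has degree $<\ell_i m_i$, hence is nonzero modulo $\overline{\phi_i}^{\,\ell_i}$ --- which you do note via the uniqueness of the $\overline{\phi_i}$-adic expansion.
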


\begin{proof}
Let us show first that the $n=\ell_1m_1+\cdots+\ell_tm_t$ polynomials $q_{i,j}(x)x^k$ are linearly independent modulo $p$. Denote by $Q_{i,j,k}(x)=\overline{q_{i,j}(x)x^k}$ their reduction modulo $p$, and suppose that for some constants $a_{i,j,k}\in\F_p$ we have
\begin{equation}\label{li}
\sum_{i,j,k}a_{i,j,k}Q_{i,j,k}(x)=0.
\end{equation}
Consider the polynomials 
\begin{equation}\label{aij}
A_{i,j}(x):=\sum_{0\le k<m_i}a_{i,j,k}Q_{i,j,k}(x)=\overline{q_{i,j}(x)}\sum_{0\le k<m_i}a_{i,j,k}x^k\in\F_p[x].
\end{equation}
Now, the equality (\ref{li}) is equivalent to $\sum_{i,j}A_{i,j}(x)=0$, and this implies $A_{i,j}(x)=0$ for all $i,j$. In fact, if $A_{i,j}(x)\ne0$,  (\ref{ordphi}) shows that 
\begin{equation}\label{ordphi2}
\ord_{\overline{\phi_I}}(\overline{A_{i,j}})=\left\{\begin{array}{ll}
 \ell_i-j,&\qquad \mbox{if }I=i,\\
\ge \ell_I,&\qquad \mbox{if }I\ne i.
\end{array}
\right.
\end{equation}
Reorder $\phi_1,\dots,\phi_t(x)$ by decreasing length of the principal part of the Newton polygon: $\ell_1\ge\cdots\ge \ell_t$, and let $(i_0,j_0)$ be the greatest pair of indices, in the lexicographical order, with $A_{i_0,j_0}(x)\ne 0$; then, $\ord_{\overline{\phi_{i_0}}}(\sum_{i,j}A_{i,j}(x))=\ell_{i_0}-j_0$, which is a contradiction. From (\ref{aij}) we deduce $a_{i,j,k}=0$ for all $i,j,k$, so that the polynomials  
$Q_{i,j,k}(x)$ are $\F_p$-linearly independent.

In particular, since the polynomials $q_{i,j}(x)x^k$ have all degree less than $n$, the integral elements $\alpha_{i,j,k}$ are $\Z$-linearly independent. Let $M$ be the $\Z$-module genera\-ted by all $\alpha_{i,j,k}$. In order to finish the proof of the lemma we need only to show that $p$ does not divide the index $(\Z[\t]\colon M)$. 

Take $g(x)\in\Z[x]$ of degree less than $n$ and suppose that $pg(\t)\in M$. We have 
$$
pg(x)=\sum_{i,j,k}a_{i,j,k}q_{i,j}(x)x^k,
$$
for certain integers $a_{i,j,k}$. Since $Q_{i,j,k}(x)$ are $\F_p$-linearly independent, all $a_{i,j,k}$ are divisible by $p$ and we deduce that $g(\t)\in M$.
\end{proof}

\begin{theorem}\label{main}
If $f(x)$ is $p$-regular, then the family of all
$\alpha_{i,j,k}/p^{\lfloor y_{i,j}\rfloor}$ is a $p$-integral
basis of $\Z_K$.
\end{theorem}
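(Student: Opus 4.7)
My plan is to combine the integrality estimate of Proposition \ref{denominator}, the basis property of Lemma \ref{numerators}, and the Theorem of the Index (Theorem \ref{index}) through a standard index-comparison argument.

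First, I will verify that every element $\beta_{i,j,k}:=\alpha_{i,j,k}/p^{\lfloor y_{i,j}\rfloor}$ actually lies in $\Z_K$. For $1\le j<\ell_i$ this is immediate from Proposition \ref{denominator}, since $\alpha_{i,j,k}=q_{i,j}(\t)\,\t^k$ and $\t^k\in\Z_K$. For $j=\ell_i$ one has $y_{i,\ell_i}=0$, so $\lfloor y_{i,\ell_i}\rfloor=0$ and the element $\alpha_{i,\ell_i,k}=a_{\ell_i}(\t)\,\t^k$ is trivially integral. Hence the $\Z_{(p)}$-module $M$ generated by the $\beta_{i,j,k}$ is contained in $\Z_{(p)}\otimes_\Z\Z_K$.

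Next, I will check that the $\beta_{i,j,k}$ are $\Z_{(p)}$-linearly independent and compute the index $\big[M:\Z_{(p)}[\t]\big]$. Lemma \ref{numerators} tells us that $\{\alpha_{i,j,k}\}$ is a $\Z_{(p)}$-basis of $\Z_{(p)}[\t]$; multiplying each basis element by the unit $p^{-\lfloor y_{i,j}\rfloor}\in\Q^*$ preserves linear independence over $\Q$, and in particular the $n=\sum_i m_i\ell_i$ elements $\beta_{i,j,k}$ are a $\Z_{(p)}$-basis of the free rank-$n$ module $M\supseteq\Z_{(p)}[\t]$. Since the change-of-basis matrix between $\{\alpha_{i,j,k}\}$ and $\{\beta_{i,j,k}\}$ is diagonal, the $p$-adic valuation of the index satisfies
\[
v_p\bigl(\bigl[M:\Z_{(p)}[\t]\bigr]\bigr)=\sum_{i=1}^{t}m_i\sum_{j=1}^{\ell_i}\lfloor y_{i,j}\rfloor=\sum_{i=1}^{t}m_i\bigl(\lfloor y_{i,1}\rfloor+\cdots+\lfloor y_{i,\ell_i-1}\rfloor\bigr),
\]
using $y_{i,\ell_i}=0$. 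By Definition \ref{phindex} this is exactly $\sum_i \ind_{\phi_i}(f)$.

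Finally, I invoke the Theorem of the Index. Since $f(x)$ is $p$-regular, Theorem \ref{index} gives $\ind_p(f)=\sum_i\ind_{\phi_i}(f)$, which equals $v_p\bigl([\Z_{(p)}\otimes\Z_K:\Z_{(p)}[\t]]\bigr)$. Combining with the previous step,
\[
v_p\bigl(\bigl[M:\Z_{(p)}[\t]\bigr]\bigr)=v_p\bigl(\bigl[\Z_{(p)}\otimes\Z_K:\Z_{(p)}[\t]\bigr]\bigr).
\]
Since $\Z_{(p)}[\t]\subseteq M\subseteq \Z_{(p)}\otimes\Z_K$ and both of the outer modules are free of rank $n$ with the same index over $\Z_{(p)}[\t]$, it follows that $M=\Z_{(p)}\otimes\Z_K$, which is the desired statement.

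The only non-routine ingredient is the integrality of the quotients, already supplied by Proposition \ref{denominator}; the main obstacle I would expect in writing this up cleanly is keeping the bookkeeping between the local index, the global index at $p$, and the combinatorial formula $\ind_{\phi_i}(f)=m_i\sum_{j}\lfloor y_{i,j}\rfloor$ straight, so that the chain of equalities connecting $v_p([M:\Z_{(p)}[\t]])$ to $\ind_p(f)$ is transparent and the final index comparison forces equality $M=\Z_{(p)}\otimes\Z_K$.
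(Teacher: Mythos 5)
Your proof is correct and follows essentially the same route as the paper: integrality via Proposition \ref{denominator}, the basis property via Lemma \ref{numerators}, the diagonal index computation giving $\sum_i\ind_{\phi_i}(f)$, and the Theorem of the Index to force equality. The only blemish is the harmless misidentification $\alpha_{i,\ell_i,k}=a_{\ell_i}(\t)\t^k$ (in general $q_{i,\ell_i}(x)\ne a_{\ell_i}(x)$), but the point you actually need — that $q_{i,\ell_i}(\t)\t^k$ is trivially integral and $y_{i,\ell_i}=0$ — is right.
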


\begin{proof}Let $M:=\gen{\alpha_{i,j,k}}_{\Z}\subseteq \Z[\t]$, and consider the chain of free $\zp$-modules
$$
\zp[\t]= M\otimes_\Z\zp\subseteq \gen{\alpha_{i,j,k}/p^{\lfloor y_{i,j}\rfloor}}_\Z\otimes_\Z\zp\subseteq \Z_K\otimes_\Z\zp
$$
The $v_p$ value of the index of $M\otimes_\Z\zp$ inside $\gen{\alpha_{i,j,k}/p^{\lfloor y_{i,j}\rfloor}}_\Z\otimes_\Z\zp$ is exactly $\ind_{\phi_1}(f)+\cdots+\ind_{\phi_t}(f)$, which is equal to $\ind_p(f)$ by the Theorem of the index. Hence, $\gen{\alpha_{i,j,k}/p^{\lfloor y_{i,j}\rfloor}}_\Z\otimes_\Z\zp= \Z_K\otimes_\Z\zp$. 
\end{proof}

This theorem covers a wide range of cases, because there is a high probability that a random monic polynomial with integer coefficients is $p$-regular with respect to randomly chosen lifts to $\Z[x]$ of the irreducible factors modulo $p$. To illustrate its usefulness, we apply the theorem to  compute a $p$-integral basis of an arbitray quartic number field. To this aim is devoted the rest of the paper.

\section{Quartic fields: preliminaries}\label{secPrel}
From now on we fix a prime number $p$ and a monic irreducible polynomial $$f(x)=x^4+ax^2+bx+c\in\Z[x].$$ We choose a root $\t\in\qb$ of $f(x)$ and we denote by $K$ the quartic field generated by $\t$. 
The discriminant of $f(x)$ is
$$
\disc(f)=16a^4c-128a^2c^2+144ab^2c-4a^3b^2+256c^3-27b^4.
$$

We want to apply the methods of the last section to compute a $p$-integral basis of $K$. Clearly, if $p$ does not divide $\disc(f)$, then $1,\t,\t^2,\t^3$ is a $p$-integral basis; thus, we are interested only in the case $p|\disc(f)$, or equivalently, $f(x)$ inseparable modulo $p$.  We shall discuss separately the different cases that arise according to the different possibilities for the factorization of $f(x)$ modulo $p$. These cases can be distinguished by the computation of $\gcd(\overline{f},\overline{f'})$; however, in some concrete applications it might be useful to distinguish them directly in terms of $a,b,c$. This is the aim of the next lemma.
We denote by $\leg{-}{p}$ the Legendre symbol.

\begin{lemma}\label{factorization}
 Let $f(x)=x^4+ax^2+bx+c\in\Z[x]$. Then,

(A)  The polynomial $f(x)$ factorizes modulo $p$ as the square of a quadratic irreducible polynomial if and only if it satisfies one of the following conditions:
\begin{enumerate}
\item[(A1)]$p>2,\ p\mid b,\ p\nmid ac, \ a^2\equiv 4c\md{p}$,\ $\leg{-a/2}p=-1$.
\item[(A2)] $p=2, \ 2\mid b, \ 2\nmid ac$, 
\end{enumerate}

(B)  The polynomial $f(x)$ has only one double root modulo $p$ if and only if $p>2$ and it satisfies one of the following conditions:
\begin{enumerate}
 \item[(B1)] $p\nmid a$, \ $p\mid b$, \ $p\mid c$, 
\item[(B2)]  $p\nmid a$, \ $p\nmid b$, \ $p\mid c$, \ $4a^3+27b^2\equiv0\md{p}$,
\item[(B3)]  $p\mid a$, \ $p\nmid b$, \ $p\nmid c$, \ $256c^3\equiv 27b^4\md{p}$,
\item[(B4)]  $p\nmid abc$, \ $p\mid \disc(f)$, \ $p\nmid 2a(a^2-4c)+9b^2$.
\end{enumerate}

(C) The polynomial $f(x)$ has two different double roots modulo $p$ if and only if it satisfies one of the following conditions:
\begin{enumerate}
\item[(C1)]  $p>2$, \ $p\nmid a$, \ $p\mid b$, \ $p\nmid c$, \ $a^2\equiv 4c\md{p}$, \ $\leg{-a/2}p=1$.
\item[(C2)] $p=2$, \ $2\nmid a$, $2\mid b$, $2\mid c$, 
\end{enumerate}

(D) The polynomial $f(x)$ has a triple (and not $4$-tuple) root modulo $p$ if and only if it satisfies one of the following conditions:
\begin{enumerate}
\item[(D1)] $p>3$, \ $p\nmid abc$, \ $p\mid\disc(f)$, \ $p\mid  2a(a^2-4c)+9b^2$.
\item[(D2)] $p=3$, \ $3\mid a$, \ $3\nmid b$, \ $3\mid c$, 
\end{enumerate}

(E) The polynomial $f(x)$ has a $4$-tuple root modulo $p$ if and only if it satisfies one of the following conditions:
\begin{enumerate}
 \item[(E1)] $p\mid a$, \ $p\mid b$, \ $p\mid c$, 
\item[(E2)] $p=2$, \ $2\mid a$, \ $2\mid b$, \ $2\nmid c$.
\end{enumerate}
\end{lemma}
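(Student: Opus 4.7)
The proof is a case-by-case verification. The inseparable factorization types of a monic quartic over $\F_p$ are exhausted by: (A) the square of an irreducible quadratic; (B) a unique double root with a separable cofactor; (C) the square of a separable quadratic with two distinct roots; (D) a triple root with a disjoint simple root; (E) a quadruple root. For each shape I write a generic factorization, expand, and compare coefficients with $x^4+ax^2+bx+c$; these comparisons give the stated necessary conditions, and the same identities make the converse transparent.

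Cases (A), (C) and (E) are routine. Expanding $(x^2+\alpha x+\beta)^2$ gives $2\alpha\equiv 0$, $\alpha^2+2\beta\equiv a$, $2\alpha\beta\equiv b$, $\beta^2\equiv c\pmod p$. For $p>2$ this forces $b\equiv 0$, $\beta\equiv a/2$ and $a^2\equiv 4c$; the quadratic $x^2+a/2$ is irreducible or splits with distinct roots according to $\leg{-a/2}{p}=-1$ or $+1$, giving (A1) versus (C1). For $p=2$, the only monic irreducible quadratic is $x^2+x+1$, producing (A2), while $x(x+1)$ produces (C2). For (E), $(x-\alpha)^4$ forces $\alpha\equiv 0$ when $p>2$, giving (E1); for $p=2$, both $\alpha\in\{0,1\}$ are allowed, giving (E1) and (E2) respectively.

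For (D), expand $(x-\alpha)^3(x-\beta)$ and use the vanishing of the cubic coefficient to get $\beta\equiv -3\alpha$, so that $a\equiv -6\alpha^2$, $b\equiv 8\alpha^3$, $c\equiv -3\alpha^4\pmod p$. For $p>3$, the condition $\alpha\ne\beta$ is equivalent to $\alpha\ne 0$, whence $p\nmid abc$, and a direct substitution yields $2a(a^2-4c)+9b^2\equiv 0$, giving (D1). For $p=3$ the relation $\beta\equiv -3\alpha$ collapses to $\beta\equiv 0$, forcing $a\equiv c\equiv 0$ and $b\equiv -\alpha^3\not\equiv 0$, which is (D2); conversely, every element of $\F_3$ is a cube, so writing $-b\equiv \alpha^3\pmod 3$ gives $x^4+bx\equiv x(x+b)^3\pmod 3$.

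The heart of the argument is case (B). If $\alpha\in\F_p$ is a double root of $\bar f$, polynomial division yields $\bar f=(x-\alpha)^2(x^2+2\alpha x+(a+3\alpha^2))$, so that
\[
b\equiv -2\alpha(a+2\alpha^2),\qquad c\equiv \alpha^2(a+3\alpha^2)\pmod p,
\]
and $\alpha$ is a triple root precisely when $6\alpha^2+a\equiv 0$. The four sub-cases are distinguished by which of $a,b,c$ vanish mod $p$: (B1) corresponds to $\alpha\equiv 0$ (so $b,c\equiv 0$ and $p\nmid a$); (B2) to $\alpha\ne 0$ with $p\mid c$, which forces $a+3\alpha^2\equiv 0$ and, after eliminating $\alpha$ from $b\equiv 2\alpha^3$, yields $4a^3+27b^2\equiv 0$; (B3) to $p\mid a$ with $\alpha\ne 0$, which gives $\alpha^3\equiv -b/4$ and $\alpha^4\equiv c/3$, whence $(\alpha^3)^4=(\alpha^4)^3$ produces $256c^3\equiv 27b^4$; (B4) to $p\nmid abc$. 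The hardest step, and the main obstacle, is (B4), where one must detect a mere double root (as opposed to a triple root) from $a,b,c$ alone. This is achieved by the identity
\[
2a(a^2-4c)+9b^2\equiv 2(a+6\alpha^2)^2(a+2\alpha^2)\pmod p,
\]
verified by direct substitution of the formulae for $b$ and $c$. Since $p\nmid b$ forces $p\nmid(a+2\alpha^2)$, the left-hand side is a unit mod $p$ iff $a+6\alpha^2\not\equiv 0$, iff $\alpha$ is \emph{not} a triple root. Each converse is then immediate: for (B1) and (B3) the factorization can be read off $\bar f$ directly, while (B2) and (B4) follow by solving the appropriate discriminant-type relation for $\alpha$ and checking via the above identities that the resulting root has multiplicity exactly two. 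The remaining subtlety is to check that the small primes $p=2,3$ render (B2) and (B3) vacuous (which happens because $27$ or $256$ vanish mod $3$, forcing a coefficient to be divisible by $p$ against the standing hypotheses), so no case is wrongly attributed.
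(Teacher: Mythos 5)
Your proof follows essentially the same route as the paper's: enumerate the five inseparable factorization types of a monic quartic with zero cubic coefficient, parametrize each one, compare coefficients with $x^4+ax^2+bx+c$, and in the crucial subcase (B4) use the identity $2a(a^2-4c)+9b^2\equiv 2(a+2\alpha^2)(a+6\alpha^2)^2\pmod{p}$, which is exactly the paper's $2(t-s^2)(t+3s^2)^2$ under the substitution $t=a+3s^2$. Your derivations of (B2) and (B3) by eliminating $\alpha$ are marginally more direct than the paper's appeal to the discriminant of the cubic $x^3+ax+b$ and to the explicit formula for $\disc(f)$ when $p\mid a$, but the content is the same, and you correctly flag the vacuity of (B2) and (B3) at $p=3$.

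There is one concrete point you never address, and it is needed for the ``only if'' direction of (B): the statement asserts that case (B) forces $p>2$. Over $\F_2$ the cofactor $x^2+2\alpha x+(a+3\alpha^2)$ collapses to $x^2+(a+\alpha^2)$, which is always a perfect square, so a quartic of this shape can never have exactly one double root modulo $2$; this observation is what makes (B) a $p>2$ phenomenon. Without it, the converse of (B1) fails at $p=2$: the congruences $2\nmid a$, $2\mid b$, $2\mid c$ there describe $f\equiv x^2(x+1)^2$, which is type (C2), not type (B), so your claim that ``for (B1) the factorization can be read off $\bar f$ directly'' would misclassify this case. It is a one-line fix, but it should be stated; everything else in your sketch checks out.
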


\begin{proof}
(A) If $p=2$, the only possibility is $f(x)\equiv(x^2+x+1)^2\equiv x^4+x^2+1\md2$. If $p>2$, this case is determined by the existence of an integer $s$ such that $x^2+s$ is irreducible modulo $p$ and $f(x)\equiv(x^2+s)^2\md{p}$, or equivalently
$$
a\equiv 2s\md{p},\quad b\equiv 0\md{p},\quad c\equiv s^2\md{p},\quad 
\leg{-s}p=-1,
$$which is is equivalent to (A1).

(B) This case does not occur for $p=2$. For $p>2$, it is determined by the existence of two integers $s,t\in\Z$ such that 
\begin{equation}\label{onedouble}
f(x)\equiv (x-s)^2(x^2+2sx+t)\md{p}, \quad t\not\equiv-3s^2\md{p}, \quad t\not\equiv s^2\md{p}.
\end{equation}
The decomposition of $f(x)$ modulo $p$ implies that
\begin{equation}\label{onedoubleabc}
a\equiv t-3s^2\md{p},\quad b\equiv 2s(s^2-t)\md{p},\quad c\equiv s^2t\md{p}. 
\end{equation}

Suppose first that $p\mid c$. If $p\mid b$, the double root of $f(x)$ modulo $p$ is zero,
and this corresponds to (B1). If $p\nmid b$, then (\ref{onedouble}) is equivalent to the fact that  $x^3+ax+b$ has a double root 
modulo $p$ and no triple root, and this is equivalent to (B2).
 
Suppose now that $p\nmid c$. If $p\mid b$, then (\ref{onedoubleabc}) shows that  $p\mid (s^2-t)$, in contradiction with (\ref{onedouble}). If $p\nmid b$ and $p\mid \disc(f)$, then $f(x)$ has some multiple root $s$
 modulo $p$ because it does not fall in case (A); hence, $a,b,c$ satisfy
(\ref{onedoubleabc}) for some integer $t$.  If $p\mid a$, the condition 
$p\mid \disc(f)$ is equivalent to $256c^3\equiv 27b^4\md{p}$ and $f(x)$ cannot have neither triple nor 
$4$-tuple roots modulo $p$ because $f'(x)\equiv 4x^3+b\md{p}$ is separable modulo $p$.
Finally, if $p\nmid a$,  from (\ref{onedoubleabc}) we get
$2a(a^2-4c)+9b^2\equiv2(t-s^2)(t+3s^2)^2\md{p}$,
so that (\ref{onedouble}) is equivalent to $p\nmid 2a(a^2-4c)+9b^2$.

(C) If $p=2$, the only possibility is $f(x)\equiv x^2(x+1)^2\md2$, which is equivalent to (C2). 
If $p>2$,  then $f(x)$ has two double roots modulo $p$ if and only if  $f(x)\equiv (x-s)^2(x+s)^2\md{p}$, for some integer $s$ not divisible by $p$. This implies (C1), because $a\equiv-2s^2\md{p}$ and $\disc(f)\equiv 16c(a^2-4c)^2\md{p}$. Conversely, (C1) implies that $f'(x)\equiv 2x(2x^2+a)\md{p}$ is a separable polynomial modulo $p$, so that $f(x)$ has neither triple nor $4$-tuple roots modulo $p$. Since $f(x)$ falls neither in case (A) nor (B), it must have two different double roots modulo $p$.

(D) 
This case is determined by the existence of an integer $s$ such that
\begin{equation}\label{triple}
f(x)\equiv (x-s)^3(x+3s)\md{p},\quad 4s\not\equiv 0\md{p}.
\end{equation}
In particular, $p$ is necessarily odd. If $p=3$, this condition is equivalent to (D2), and then $s\equiv-b \md3$. If $p>3$, condition (\ref{triple})
is equivalent to
\begin{equation}\label{tripleabc}
a\equiv-6s^2\md{p},\quad b\equiv 8s^3\md{p},\quad c\equiv -3s^4\md{p}, 
\end{equation}for some $s$ not divisible by $p$; clearly, this implies
 $p\nmid abc$, $p\mid \disc(f)$. The condition 
$p\mid  2a(a^2-4c)+9b^2$ is necessary too, because otherwise $f(x)$ would satisfy (B4). Conversely, (D1) implies that $f(x)$ has a multiple root modulo $p$ with multiplicity three, because $f(x)$ will be inseparable modulo $p$ and it does not fall in neither of the cases (A), (B), (C) and (E) below.

(E) The only possibilities for $f(x)$ modulo $p$ are: $\ f(x)\equiv (x+1)^4\md2$, and $f(x)\equiv x^4\md{p}$.
\end{proof}

A user's guide to find the $p$-integral basis of $K$ would be: check first which of the conditions (A), (B), (C), (D) or (E) of Lemma \ref{factorization} satisfies $f(x)$, and then look for the following equation, lemma or table:\medskip

\begin{center}
\begin{tabular}{|c|c|c|c|c|c|c|c|c|c|}
\hline
 A1&A2&B&C1&C2&D1&D2&E1&E2\\\hline
(\ref{Tquadraticp})&(\ref{Tquadratic2})&(\ref{Tdouble})&(\ref{Tdoubledoublep})&(\ref{Tdoubledouble2})&Lem. \ref{Ttriple}&Lem. \ref{Ttriple3}, \ref{Ttriple3bis}&Table \ref{T4tuple}&Table \ref{T4extra}\\\hline
\end{tabular}
\end{center}

\subsection{An iteration method}
Consider now an arbitrary monic and irreducible polynomial $F(x)\in\Z[x]$ of degree four.
We describe in this paragraph an iterative process that converges in some cases to an integer $s\in\Z$ such that $F(x)$ is $(x-s)$-regular.

\begin{definition}\label{regular}
We say that the integer $s$ is  \emph{regular} if  $F(x)$ is $(x-s)$-regular. Otherwise, we say that $s$ is \emph{irregular}. 
\end{definition}

Let $s$ be an integer, and denote
\begin{equation}\label{rescoeffs}\as{1.4}
\begin{array}{l}
u_0:=v_p(F(s)),\ u_1:=v_p(F'(s)),\ u_2:=v_p(F''(s)/2),\ u_3:=v_p(F'''(s)/6),\\
\sigma_0:=F(s)/p^{u_0},\ \sigma_1:=F'(s)/p^{u_1},\ \sigma_2:=F''(s)/2p^{u_2},\ \sigma_3:=F'''(s)/6p^{u_3}. 
\end{array}
\end{equation}
The $(x-s)$-adic development of $F(x)$ is 
$$
F(x)=F(s)+F'(s)(x-s)+\frac12F''(s)(x-s)^2+\frac16F'''(s)(x-s)^3+(x-s)^4,
$$
and the $(x-s)$-Newton polygon of $F(x)$ is the lower convex envelope of the points: $(0,u_0)$, $(1,u_1)$, $(2,u_2)$, $(3,u_3)$, and $(4,0)$. 

Suppose that $s$ satisfies one of the following \emph{initial conditions}:
\begin{equation}\label{initial}\as{1.2}
\begin{array}{cl}
\mbox{(i)}& u_2=0, \ u_1>0, \ u_0>0,\\
\mbox{(ii)}&u_3=0, \ u_2>0, \ u_1>0, \ u_0>0,\\
\mbox{(iii)}&\begin{cases}u_0>2u_2, \ u_1>\frac32u_2, \ u_3 \ge\frac12 u_2>0, \ \mbox{ and }\\
\mbox{the side of $N^-_{x-s}(F)$ with end points $(2,u_2)$ and $(4,0)$}\\
\mbox{has a separable residual polynomial}      \end{cases}
\end{array}
\end{equation}

If $s$ is irregular, then the unique side whose residual polynomial is inseparable has an integer slope, and the unique multiple irreducible factor of the residual polynomial has degree one. In fact, Table \ref{irregular} displays all possible situations where an irregular $s$ satisfies these initial conditions, and Figure 3 shows the possible shapes of $N^-_{x-s}(F)$ in all these cases. When we draw two points with the same abscissa, it means that both possibilities may occur.

\begin{center}
\setlength{\unitlength}{5.mm}
\begin{picture}(16,5)
\put(-.15,1.85){$\bullet$}
\put(1.85,-.15){$\bullet$}\put(.85,2.85){$\bullet$}\put(.85,.85){$\bullet$}
\put(-1,0){\line(1,0){5.5}}
\put(0,-.6){\line(0,1){4.5}}
\put(2,0){\line(-1,1){2}}\put(2.02,0){\line(-1,1){2}}
\put(-.4,-.6){\begin{footnotesize}$0$\end{footnotesize}}
\put(.9,-.6){\begin{footnotesize}$1$\end{footnotesize}}
\put(1.9,-.6){\begin{footnotesize}$2$\end{footnotesize}}
\put(2.9,-.6){\begin{footnotesize}$3$\end{footnotesize}}
\put(3.9,-.6){\begin{footnotesize}$4$\end{footnotesize}}
\put(1,-.1){\line(0,1){.2}}\put(3,-.1){\line(0,1){.2}}
\put(4,-.1){\line(0,1){.2}}
\put(3,3.5){\begin{footnotesize}(i)\end{footnotesize}}

\put(10.85,2.35){$\bullet$}
\put(11.85,1.35){$\bullet$}\put(11.85,2.85){$\bullet$}
\put(12.85,.35){$\bullet$}\put(13.85,1.35){$\bullet$}
\put(14.85,-.15){$\bullet$}\put(13.85,.1){$\bullet$}
\put(10,0){\line(1,0){5.5}}
\put(11,-.6){\line(0,1){4.5}}
\put(15,0){\line(-4,1){2}}\put(15.02,0){\line(-4,1){2}}
\put(13,.5){\line(-1,1){2}}\put(13.02,.5){\line(-1,1){2}}
\put(10.6,-.6){\begin{footnotesize}$0$\end{footnotesize}}
\put(11.9,-.6){\begin{footnotesize}$1$\end{footnotesize}}
\put(12.9,-.6){\begin{footnotesize}$2$\end{footnotesize}}
\put(13.9,-.6){\begin{footnotesize}$3$\end{footnotesize}}
\put(14.9,-.6){\begin{footnotesize}$4$\end{footnotesize}}
\put(14,-.1){\line(0,1){.2}}\put(12,-.1){\line(0,1){.2}}
\put(13,-.1){\line(0,1){.2}}
\put(14,3.5){\begin{footnotesize}(iii)\end{footnotesize}}
\end{picture}
\end{center}\be

\begin{center}
\setlength{\unitlength}{5.mm}
\begin{picture}(21,5)
\put(-.15,3.25){$\bullet$}\put(2.85,-.15){$\bullet$}
\put(1.85,.35){$\bullet$}\put(.85,3.25){$\bullet$}\put(.85,1.85){$\bullet$}
\put(-1,0){\line(1,0){5.5}}
\put(0,-.6){\line(0,1){4.5}}
\put(2,.5){\line(-2,3){2}}\put(2.02,.5){\line(-2,3){2}}
\put(3,0){\line(-2,1){1}}\put(3.02,0){\line(-2,1){1}}
\put(-.4,-.6){\begin{footnotesize}$0$\end{footnotesize}}
\put(.9,-.6){\begin{footnotesize}$1$\end{footnotesize}}
\put(1.9,-.6){\begin{footnotesize}$2$\end{footnotesize}}
\put(2.9,-.6){\begin{footnotesize}$3$\end{footnotesize}}
\put(3.9,-.6){\begin{footnotesize}$4$\end{footnotesize}}
\put(1,-.1){\line(0,1){.2}}\put(2,-.1){\line(0,1){.2}}
\put(4,-.1){\line(0,1){.2}}
\put(3,3.5){\begin{footnotesize}(ii)\end{footnotesize}}

\put(7.85,2.85){$\bullet$}\put(8.85,.85){$\bullet$}
\put(9.85,.35){$\bullet$}\put(9.85,1.85){$\bullet$}
\put(10.85,-.15){$\bullet$}
\put(7,0){\line(1,0){5.5}}
\put(8,-.6){\line(0,1){5}}
\put(11,0){\line(-2,1){2}}\put(11.02,0){\line(-2,1){2}}
\put(9,1){\line(-1,2){1}}\put(9.02,1){\line(-1,2){1}}
\put(7.6,-.6){\begin{footnotesize}$0$\end{footnotesize}}
\put(8.9,-.6){\begin{footnotesize}$1$\end{footnotesize}}
\put(9.9,-.6){\begin{footnotesize}$2$\end{footnotesize}}
\put(10.9,-.6){\begin{footnotesize}$3$\end{footnotesize}}
\put(11.9,-.6){\begin{footnotesize}$4$\end{footnotesize}}
\put(9,-.1){\line(0,1){.2}}\put(10,-.1){\line(0,1){.2}}
\put(12,-.1){\line(0,1){.2}}
\put(11,3.5){\begin{footnotesize}(ii)\end{footnotesize}}

\put(15.85,2.85){$\bullet$}
\put(16.85,1.85){$\bullet$}\put(16.85,2.85){$\bullet$}
\put(17.85,.85){$\bullet$}\put(17.85,1.85){$\bullet$}
\put(18.85,-.15){$\bullet$}
\put(15,0){\line(1,0){5.5}}
\put(16,-.6){\line(0,1){5}}
\put(19,0){\line(-1,1){3}}\put(19.02,0){\line(-1,1){3}}
\put(15.6,-.6){\begin{footnotesize}$0$\end{footnotesize}}
\put(16.9,-.6){\begin{footnotesize}$1$\end{footnotesize}}
\put(17.9,-.6){\begin{footnotesize}$2$\end{footnotesize}}
\put(18.9,-.6){\begin{footnotesize}$3$\end{footnotesize}}
\put(19.9,-.6){\begin{footnotesize}$4$\end{footnotesize}}
\put(20,-.1){\line(0,1){.2}}\put(17,-.1){\line(0,1){.2}}
\put(18,-.1){\line(0,1){.2}}
\put(19,3.5){\begin{footnotesize}(ii)\end{footnotesize}}
\end{picture}
\end{center}\be
\begin{center}
Figure 3
\end{center}

\begin{center}
\begin{table}
\caption{\footnotesize Irregular cases for an integer $s$ satisfying one of the initial conditions (i), (ii), or (iii) of (\ref{initial}).  }\label{irregular}
\begin{small}
\begin{tabular}{|c|c|c|c|c|c|}\hline
&$p$&$u_i,\,\sigma_i$&slope&residual polynomial\\\hline
(i)&$2$&$u_0$ even, $u_0<2u_1$&$u_0/2$&$(y+1)^2$\\\hline
(i)&$>2$&$u_0=2u_1,\,\overline{\sigma}_1^2=4\overline{\sigma}_0\overline{\sigma}_2$&$u_0/2$&$\overline{\sigma}_2\left(y+\dfrac{\overline{\sigma}_1}{2\overline{\sigma}_2}\right)^2$\\\hline
(ii)&$2$&\as{1}$\begin{array}{c}
 u_0>3u_2,\ u_0+u_2 \mbox{ even}\\u_0+u_2<2u_1\end{array}$
&$(u_0-u_2)/2$&$(y+1)^2$\\\hline
(ii)&$>2$&\as{1}$\begin{array}{c}
 u_0>3u_2,\ u_0+u_2=2u_1\\\overline{\sigma}_1^2=4\overline{\sigma}_0\overline{\sigma}_2
\end{array}$
&$(u_0-u_2)/2$&$\overline{\sigma}_2\left(y+\dfrac{\overline{\sigma}_1}{2\overline{\sigma}_2}\right)^2$\\\hline
(ii)&$2$&$u_0>(3/2)u_1,\ u_1 \mbox{ even},\ u_1<2u_2$
&$u_1/2$&$(y+1)^2$\\\hline
(ii)&$>2$&\as{1}$\begin{array}{c}
 u_0>3u_2,\ u_1=2u_2\\\overline{\sigma}_2^2=4\overline{\sigma}_1\overline{\sigma}_3
\end{array}$
&$u_1/2$&$\overline{\sigma}_3\left(y+\dfrac{\overline{\sigma}_2}{2\overline{\sigma}_3}\right)^2$\\\hline
(ii)&$2$&$
 u_0=3u_2,\ u_1=2u_2$
&$u_0/3$&$(y+1)^3$\\\hline
(ii)&$3$&$
 3|u_0,\,u_0<3u_2,\, u_0<(3/2)u_1$
&$u_0/3$&$\overline{\sigma}_3\left(y+\dfrac{\overline{\sigma}_0}{\overline{\sigma}_3}\right)^3$\\\hline
(ii)&$>3$&\as{1}$\begin{array}{c}
 u_0=3u_2,\ u_1=2u_2\\ \overline{\sigma}_1=\overline{\sigma}_2^2/3\overline{\sigma}_3, \,
 \overline{\sigma}_0=\overline{\sigma}_2^3/27\overline{\sigma}_3^2\end{array}$
&$u_0/3$&$\overline{\sigma}_3\left(y+\dfrac{\overline{\sigma}_2}{3\overline{\sigma}_3}\right)^3$\\\hline
(ii)&$>3$&\as{1}$\begin{array}{c}
u_0=(3/2)u_1<3u_2\\ 4\overline{\sigma}_1^3+27\overline{\sigma}_0^2\overline{\sigma}_3=0\end{array}$
&$u_0/3$&$\overline{\sigma}_3y^3+\overline{\sigma}_1y+\overline{\sigma}_0$\\\hline
(ii)&$>3$&\as{1}$\begin{array}{c}
u_0=3u_2<(3/2)u_1\\ 4\overline{\sigma}_2^3+27\overline{\sigma}_0\overline{\sigma}_3^2=0\end{array}$
&$u_0/3$&$\overline{\sigma}_3y^3+\overline{\sigma}_2y^2+\overline{\sigma}_0$\\\hline
(iii)&$2$&$u_0+u_2 \mbox{ even},\,u_0+u_2<2u_1$
&$(u_0-u_2)/2$&$(y+1)^2$\\\hline
(iii)&$>2$&$
u_0+u_2=2u_1,\,\overline{\sigma}_1^2=4\overline{\sigma}_0\overline{\sigma}_2$
&$(u_0-u_2)/2$&$\overline{\sigma}_2\left(y+\dfrac{\overline{\sigma}_1}{2\overline{\sigma}_2}\right)^2$\\\hline\end{tabular}
\end{small}
\end{table}
\end{center}

Any $s$ satisfying (\ref{initial})  will be regular with high probability. But, what can we do if we pick an irregular $s$? Let us show an efficient way to find a regular $s$.

\begin{lemma}\label{iteration}
Let $s$ be an integer satisfying one of the initial conditions of (\ref{initial}), and suppose that $s$ is irregular. Let $-\delta$ be the slope of unique side whose residual polynomial is inseparable. Let $y\in\Z$ be any integer whose reduction modulo $p$ is the unique multiple root of the residual polynomial, and consider the integer $\ t=s+yp^{\delta}$. Then, after a finite number of iterations $s_0:=s,s_1:=t,\dots,s_n$ of this procedure, we get a regular $s_n$.
\end{lemma}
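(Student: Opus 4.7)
The plan is to track the slope $\delta$ of the unique non-separable side, and to prove two statements: (a) after one iteration $s\mapsto t:=s+yp^{\delta}$, the new integer $t$ is either regular or again satisfies one of the initial conditions (\ref{initial}), but with a strictly steeper bad slope $\delta'>\delta$; (b) the bad slope is bounded above by a constant depending only on $F$. Together, (a) and (b) force termination at a regular $s_n$ after finitely many iterations.

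For (a), I would compute the $(x-t)$-adic development of $F$ directly by Taylor expansion about $t$:
\[
\frac{F^{(i)}(t)}{i!}=\sum_{j\ge i}\binom{j}{i}\frac{F^{(j)}(s)}{j!}\,(yp^{\delta})^{j-i},\qquad 0\le i\le 4,
\]
and read off the new valuations $u'_i:=v_p(F^{(i)}(t)/i!)$. Table \ref{irregular} tells us that the bad side has a residual polynomial of the form $\overline{\sigma}_c(y-\overline{y})^2$ or $\overline{\sigma}_c(y-\overline{y})^3$, and that $y$ is chosen as a lift of $\overline{y}$. This vanishing is designed precisely so that the substitution cancels the contributions of the bad side: for each integer abscissa $i$ in the projection of the bad side, $u'_i$ must exceed the ordinate of the bad side at $i$, while the points outside that projection rise at least by the corresponding slope contribution. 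A row-by-row check of Table \ref{irregular} should then confirm that the new $(x-t)$-polygon either is already regular, or again falls into one of the shapes (i), (ii), (iii), now with a strictly larger bad slope $\delta'>\delta$.

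For (b), by the Theorem of Ore together with (\ref{localglobal}), the bad slope $\delta_n$ at step $n$ equals $v_p(\alpha-s_n)$ for some root $\alpha\in\qb_p$ of $F$; irregularity requires two distinct roots $\alpha,\beta$ of $F$ to satisfy $v_p(\alpha-s_n)=v_p(\beta-s_n)=\delta_n$. The ultrametric inequality then forces $v_p(\alpha-\beta)\ge\delta_n$. Let $D:=\max_{\alpha\ne\beta}v_p(\alpha-\beta)$, which is finite (in fact bounded above by $\tfrac12 v_p(\disc F)$) since $F$ is separable. As soon as $\delta_n>D$, no two roots can cluster at common distance $\delta_n$ from $s_n$, so every side of $N^-_{x-s_n}(F)$ has length one and linear, hence separable, residual polynomial; in other words $s_n$ is regular.

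The main obstacle is the row-by-row verification of (a), especially for the cubic-residual rows of case (ii), where the Taylor expansion must cancel to higher order than in the quadratic-residual rows. However, the precise vanishing congruences on the $\sigma_i$ that force the residual polynomial to factor as $(y-\overline{y})^3$ rather than merely being inseparable are exactly what is needed for this higher-order cancellation, so the bookkeeping, while tedious, should go through mechanically. The termination argument in (b) is robust and case-independent, so once (a) is secured, the proof is complete.
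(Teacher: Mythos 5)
Your proposal is essentially correct, and while the local analysis coincides with the paper's (the same Taylor expansions of $F^{(i)}(t)/i!$ about $t=s+yp^{\delta}$, checked row by row against Table \ref{irregular} to see that $t$ is either regular or again satisfies (\ref{initial})), your termination argument is genuinely different. The paper uses the monovariant $\ind_{x-s_n}(F)$: it shows that in each step either $t$ is regular or at least one new lattice point falls below $N^-_{x-t}(F)$, so $\ind_{x-s_n}(F)$ strictly increases, and it is bounded above by $\ind_p(F)$ via the unconditional inequality of the Theorem of the index. You instead track the bad slope $\delta_n$ itself: since an inseparable residual polynomial forces the bad side to have degree, hence length, at least two, Ore's theorem produces two distinct roots $\alpha,\beta$ with $v_p(\alpha-s_n)=v_p(\beta-s_n)=\delta_n$, whence $\delta_n\le v_p(\alpha-\beta)\le\frac12 v_p(\disc F)$ by the ultrametric inequality and separability of $F$; since the bad slopes are integers (as recorded in Table \ref{irregular}) and strictly increase, the process stops in at most $\frac12 v_p(\disc F)$ steps. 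Both monovariants require the same bookkeeping burden: the paper must verify that the index strictly grows (or that $t$ is regular, e.g.\ the parity argument when $v_p(F(t))=v_p(F(s))+1$), while you must verify that every candidate bad side of $N^-_{x-t}(F)$ has slope strictly steeper than $-\delta$; a check of the three polygon configurations confirms this (for instance, a length-two side ending at $(3,0)$ starts at $(1,u_1')$ with $u_1'>u_1$, so its slope magnitude $u_1'/2$ exceeds $\delta=u_1/2$). Your route has the advantage of being independent of the index machinery and of giving an a priori bound on the number of iterations directly from the discriminant; one small imprecision is the phrase ``every side of $N^-_{x-s_n}(F)$ has length one'' once $\delta_n>D$ --- what actually follows is only that no side of slope steeper than $-D$ can have length $\ge 2$, but that is exactly what is needed to rule out an inseparable residual polynomial on the bad side, so the argument stands.
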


\begin{proof}
Let us deal first with the cases where $(2,u_2)$ is a vertex of $N^-_{x-s}(F)$; that is, when $s$ satisfies (i), or (iii), or the subcase $u_0>3u_2$, $u_1>2u_2$ of (ii). In these cases, if $s$ is irregular then $\delta=(u_0-u_2)/2$ is an integer and
$$\as{1.2}
\begin{array}{l}
u_1>(u_0+u_2)/2 \ \mbox{ and }\ y  \mbox{ is odd, \ if }p=2, \\ u_1=(u_0+u_2)/2 \ \mbox{ and }\ 2\sigma_2y+\sigma_1\equiv 0\md{p}, \ \mbox{ if }p>2.
\end{array}
$$  

Taylor expansion shows that
\begin{equation}\label{taylor} 
\as{1.2}
\begin{array}{ll}
\;F(t)\,\equiv F(s)+F'(s)\,y\,p^{\delta}+\frac12F''(s)y^2p^{2\delta}&\md{p^{3\delta}},\\
\,F'(t)\equiv F'(s)+F''(s)\,y\,p^{\delta}&\md{p^{2\delta}},\\
F''(t)\equiv F''(s)+F'''(s)\,y\,p^{\delta}&\md{p^{2\delta}}.
\end{array}
\end{equation}
We get in all cases (note that $v_2(F''(s))=u_2+1$, $v_2(F'''(s))=u_3+1$, if $p=2$)
\begin{equation}\label{grow}
v_p(F(t))>u_0, \quad v_p(F'(t))>\frac{u_0+u_2}2,\quad  v_p(F''(t)/2)=u_2.
\end{equation}

In particular, $t$ satisfies again the initial conditions (\ref{initial}).
Now, if $v_p(F(t))=v_p(F(s))+1$ then $F(x)$ is already $(x-t)$-regular, because $N^-_{x-t}(F)\cap ([0,2]\times\R)$ has one side of length two but degree one (because
$v_p(F(t))+v_p(F''(t)/2)$ is odd). On the other hand, if  
$v_p(F(t))\ge v_p(F(s))+2$,  there is at least one more point of integral coordinates lying below or on $N^-_{x-t}(F)$, and 
$$\ind_{x-s}(F)<\ind_{x-t}(F)\le\ind_p(F),
$$the last inequality by the Theorem of the index (Theorem \ref{index}).
Hence, we cannot have an indefinite sequence $s_0:=s, s_1:=t,\dots, s_n,\dots$  of irregular integers, because  $\ind_{x-s_n}(F)$ grows strictly in each iteration and it is bounded by $\ind_p(F)$. 

Suppose now that $s$ satisfies (ii) and the irregularity is caused by a side with end points $(1,u_1)$ and $(3,0)$ (rows $5$ and $6$ of Table \ref{irregular}). From Taylor expansion we deduce now
$$
v_p(F(t))>\frac32u_1, \quad v_p(F'(t))>u_1,\quad  v_p(F''(t)/2)>\frac12u_1,\quad v_p(F'''(t)/6)=0.
$$
In spite of the fact that $v_p(F(t))$ can be smaller than $v_p(F(s))$, we can argue as above and deduce that either $t$ is regular, or $\ind_{x-t}(F)$ is strictly greater than $\ind_{x-s}(F)$.

Finally, suppose that $s$ satisfies (ii) and $N_{x-s}^-(F)$ has a side of length three
(rows  from $7$ to $11$ of Table \ref{irregular}). From Taylor expansion we deduce now
\begin{equation}\label{iterationthree}
v_p(F(t))>u_0, \ v_p(F'(t))>\frac23\, u_0,\  v_p\left(\dfrac{F''(t)}2\right)\ge\frac13\, u_0,\ v_p\left(\dfrac{F'''(t)}6\right)=0,
\end{equation}
and $v_p(F''(t)/2)=u_0/3$
if and only if the residual polynomial has a double (and not triple) root. We finish the argument as in the previous cases. 
\end{proof}

\begin{rem}\label{remark}
\mbox{\null}
(1) In the cases described in rows $2$, $4$ and $13$ of Table \ref{irregular}, if we choose $y$ such that $2\sigma_2y+\sigma_1\equiv 0\md{p^{\delta}}$, we get $v_p(F'(t)))\ge 2v_p(F'(s))$ and the iteration process is accelerated. \medskip

(2) In the cases described in rows $10$ and $11$ of Table \ref{irregular}, all elements $s_n$ computed along the iteration method satisfy: $v_p(F''(s_n)/2)=u_0/3$.
\end{rem}

\section{Explicit $p$-integral basis of quartic fields}\label{secComputation}
In this section we apply Theorem \ref{main} to exhibit a $p$-integral basis of the quartic number field $K$ generated by the irreducible polynomial $f(x)=x^4+ax^2+bx+c\in\Z[x]$. We shall denote throughout  $\Delta:=v_p(\disc(f))$.

We discuss separately the different cases that arise according to the type of decomposition of $f(x)$ modulo $p$. 
We apply the iteration method of Lemma \ref{iteration} only when $\Delta$ can be arbitrarily large, and even in these cases we try to give a more intrinsic description of the regular elements.

\subsection{$f(x)$ is inseparable modulo $p$, but it has no multiple roots in $\F_p$}\label{Secquadratic}Suppose $f(x)$ satisfies (A) of Lemma \ref{factorization}.
If $p>2$, we take any integer $s$ satisfying $v_p(a-2s)>\frac12v_p(4c-a^2)$. For instance, if $a$ is even we may take $s=a/2$. Now we take $\phi(x)=x^2+s$ as the lift to $\Z[x]$ of the irreducible factor of $f(x)$ modulo $p$. The $\phi$-adic development of $f(x)$ is
$$
f(x)=\phi(x)^2+(a-2s)\phi(x)+bx+((a-2s)^2+4c-a^2)/4,
$$
with attached quotients $q_1(x)=\phi(x)+a-2s$, $q_2(x)=1$. Since
$2v_p(a-2s)>v_p(4c-a^2)$, we have
$$2v_p(a-2s)>\min\{v_p(b),v_p(4c-a^2)\}=v_p(bx+((a-2s)^2+4c-a^2)/4).
$$
Hence, if we denote $\nu:=\frac12\min\{v_p(b),v_p(4c-a^2)\}\in\frac12\Z$, the principal Newton polygon $N_\phi(f)=N_\phi^-(f)$ is one-sided with slope $-\nu$ (see Figure 4).

\begin{center}
\setlength{\unitlength}{5.mm}
\begin{picture}(5,5)
\put(1.85,-.15){$\bullet$}\put(-.15,2.85){$\bullet$}
\put(.85,2.35){$\bullet$}\put(1,2.5){\vector(0,1){1}}
\put(-1,0){\line(1,0){5}}\put(0,-1){\line(0,1){5.4}}
\put(2,0){\line(-2,3){2}}\put(2,.03){\line(-2,3){2}}
\put(2,-.8){\begin{footnotesize}$2$\end{footnotesize}}
\put(.85,-.8){\begin{footnotesize}$1$\end{footnotesize}}
\put(-.4,-.6){\begin{footnotesize}$0$\end{footnotesize}}
\put(-1,2.9){\begin{footnotesize}$2\nu$\end{footnotesize}}
\multiput(1,-.1)(0,.25){11}{\vrule height2pt}
\end{picture}
\end{center}\be
\begin{center}
Figure 4
\end{center}

Take $\epsilon\in \F_{p^2}$ satisfying $\epsilon^2+\overline{s}=0$, and denote
$b_0:=b/p^{2\nu}$, $c_0:=(4c-a^2)/p^{2\nu}$. If $2\nu$ is odd, the side has degree one, whereas for $\nu\in\Z$, the residual polynomial attached to the side is $y^2+\overline{b}_0\epsilon+\frac14\overline{c}_0\in\F_{p^2}[y]$, 
which is always separable. Therefore, $f(x)$ is $p$-regular and Theorem \ref{main} shows that $1,\ \t,\ q_1(\t)/p^{\lfloor\nu\rfloor},\ \t q_1(\t)/p^{\lfloor\nu\rfloor}$ is a $p$-integral basis of $K$.
Since $(a-2s)/p^{\lfloor\nu\rfloor}$ is an integer, the following family is a $p$-integral basis too:
\begin{equation}\label{Tquadraticp}
1,\ \t,\ \dfrac{\t^2+s}{p^{\lfloor\nu\rfloor}},\ \dfrac{\t^3+s\t}{p^{\lfloor\nu\rfloor}},\qquad \nu=\frac12\min\{v_p(b),v_p(4c-a^2)\}.
\end{equation}

If $p=2$ we may take $\phi(x)=x^2+x+1\in\Z[x]$ as a lift of the irreducible factor of $f(x)$ modulo $2$. The $\phi$-adic development of $f(x)$ is
$$
f(x)=\phi(x)^2-(2x-a+1)\phi(x)+(b+1-a)x+c-a,
$$and the two quotients attached to this development are $q_1(x)=\phi(x)-(2x-a+1)=x^2-x+a$, $q_2(x)=1$. Since $v_2(2x-a+1)=1$ regardless of the (odd) value of $a$, the $\phi$-Newton polygon has three diferent  possibilities according to $v_2((b+1-a)x+c-a)=1,2$ or $\ge 3$, reflected in Figure 5.

\begin{center}
\setlength{\unitlength}{5.mm}
\begin{picture}(16,5)
\put(-.15,1.85){$\bullet$}\put(1.85,.85){$\bullet$}\put(.85,1.85){$\bullet$}
\put(-1,1){\line(1,0){4}}
\put(0,0){\line(0,1){5}}
\put(2,1){\line(-2,1){2}}\put(2,1.02){\line(-2,1){2}}
\put(-.4,.4){\begin{footnotesize}$0$\end{footnotesize}}
\put(.9,.4){\begin{footnotesize}$1$\end{footnotesize}}
\put(1.9,.4){\begin{footnotesize}$2$\end{footnotesize}}
\put(-.5,1.85){\begin{footnotesize}$1$\end{footnotesize}}
\put(-.5,2.85){\begin{footnotesize}$2$\end{footnotesize}}
\put(-.1,3){\line(1,0){.2}}\put(-.1,2){\line(1,0){.2}}
\put(1,.9){\line(0,1){.2}}

\put(6.85,2.85){$\bullet$}\put(7.85,1.85){$\bullet$}\put(8.85,.85){$\bullet$}
\put(7,0){\line(0,1){5}}
\put(6,1){\line(1,0){4}}
\put(9,1){\line(-1,1){2}}\put(9,1.02){\line(-1,1){2}}
\put(6.6,.4){\begin{footnotesize}$0$\end{footnotesize}}
\put(7.85,.4){\begin{footnotesize}$1$\end{footnotesize}}
\put(8.9,.4){\begin{footnotesize}$2$\end{footnotesize}}
\put(6.5,1.85){\begin{footnotesize}$1$\end{footnotesize}}
\put(6.5,2.85){\begin{footnotesize}$2$\end{footnotesize}}
\put(8,.9){\line(0,1){.2}}
\put(6.9,2){\line(1,0){.2}}\put(6.9,3){\line(1,0){.2}}

\put(13.85,3.85){$\bullet$}\put(14.85,1.85){$\bullet$}\put(15.85,.85){$\bullet$}
\put(14,0){\line(0,1){5}}
\put(13,1){\line(1,0){4}}
\put(16,1){\line(-1,1){1}}\put(16,1.02){\line(-1,1){1}}
\put(15,2){\line(-1,2){1}}\put(15,2.02){\line(-1,2){1}}
\put(13.6,.4){\begin{footnotesize}$0$\end{footnotesize}}
\put(14.9,.4){\begin{footnotesize}$1$\end{footnotesize}}
\put(15.9,.4){\begin{footnotesize}$2$\end{footnotesize}}
\put(13.5,1.85){\begin{footnotesize}$1$\end{footnotesize}}
\put(13.5,2.85){\begin{footnotesize}$2$\end{footnotesize}}
\put(12.8,3.85){\begin{footnotesize}$\ge3$\end{footnotesize}}
\put(14,4){\vector(0,1){.7}}\put(15,.9){\line(0,1){.2}}
\put(13.9,2){\line(1,0){.2}}\put(13.9,3){\line(1,0){.2}}\put(13.9,4){\line(1,0){.2}}
\end{picture}
\end{center}
\begin{center}
Figure 5
\end{center}

In the first and third cases, the residual polynomials attached to the sides have degree one, whereas in the second case the residual polynomial attached to the unique side of slope $-1$ is $y^2+y+1\in\F_2[y]$, which is irreducible. Therefore, $f(x)$ is always $2$-regular and Theorem \ref{main} shows that the $2$-integral basis of $K$ is: 
\begin{equation}\label{Tquadratic2}
1,\,\t,\,\t ^2,\,\t^3,\quad \mbox{ or }\quad 1,\,\t,\,\dfrac{\t^2+\t+1}2,\, \dfrac{\t^3+\t^2+\t}2,
\end{equation}
according to $\min\{v_2(b+1-a),v_2(c-a)\}=1$, or greater than one.

\subsection{$f(x)$ has only one double root modulo $p$}\label{double}
Suppose $f(x)$ satisfies (B) of Lemma \ref{factorization}, and let $s_0$ be any integer such that $f(x)\equiv (x-s_0)^2g(x)\md{p}$, for some separable (modulo $p$) polynomial $g(x)$ such that $g(s_0)$ is not divisible by $p$. By the Theorem of Ore (Theorem \ref{ore}), if $p$ ramifies in $\Z_K$ then it splits as $p\Z_K=\p^ 2\q_1\q_2$ (if $g(x)$ splits modulo $p$), or  $p\Z_K=\p^ 2\q$ (if $g(x)$ is irreducible modulo $p$). In any case, since $p>2$ we have $v_p(\disc(K))=0,1$ and $\ind_p(f)=\lfloor \Delta/2\rfloor$.

Since $s_0$ is a separable root of $f'(x)$ modulo $p$, Hensel's lemma shows that $f'(x)$ has a $p$-adic root congruent to $s_0$ modulo $p$. Let $s\in\Z$ be a sufficiently good approximation of this root; more precisely: $v_p(f'(s))>\Delta/2$.
The polygon $N=N_{x-s}^-(f)$ has length two and
$$
v_p(f'(s))>\Delta/2\ge \ind_p(f)\ge \ind_{x-s}(f)=\min\{\frac12v_p(f(s)),v_p(f'(s))\}.
$$
Hence, $v_p(f'(s))>v_p(f(s))/2$, and this implies that $f(x)$ is $(x-s)$-regular. In particular, $\ind_{x-s}(f)=\ind_p(f)=\lfloor \Delta/2\rfloor$.

The first quotient of the $(x-s)$-adic development is:
\begin{equation}\label{q1}
q_{1,x-s}(x)=(f(x)-f(s))/(x-s)= x^3+sx^2+(a+s^2)x+s^3+as+b.
\end{equation}
Let $q_2(x)$ be the second quotient of this $(x-s)$-development, and let $q_3(x), \,q_4(x)$ be the two other quotients that arise from the other factors of $f(x)$ modulo $p$. By Theorem \ref{main}, the following four integral elements are a $p$-integral basis:
$$
q_{1,x-s}(\t)/p^\nu,\ q_2(\t),\ q_3(\t),\ q_4(\t),\quad \nu=\lfloor \Delta/2\rfloor.
$$
Since $q_2(x)$, $q_3(x)$, $q_4(x)$ are monic polynomials with integer coefficients, the following family is a $p$-integral basis too:
\begin{equation}\label{Tdouble}
1,\ \t,\ \t^2,\ (\t^3+s\t^2+(a+s^2)\t+s^3+as+b)/p^\nu, \quad \nu=\lfloor \Delta/2\rfloor.
\end{equation}

\subsection{$f(x)$ has two double roots modulo $p$}\label{Secdubledouble}
Suppose $f(x)$ satisfies (C) of Lemma \ref{factorization}.
We want to choose two regular lifts to $\Z$ of the two double roots modulo $p$.  We show how to achieve this with different arguments for the cases $p>2$ and $p=2$.

\subsubsection{Two double roots, case $p>2$}Recall that $f(x)\equiv (x-s)^2(x+s)^2\md{p}$, 
where $s$ is a square root of $-a/2$ modulo $p$.

\begin{lemma}\label{regreg}
Let us write $b=Bp^m$, $a^2-4c=Ap^{m'}$, for some positive exponents $m,m'$ and some integers $A,B$ not divisible by $p$. Let $r=\min\{m,m'\}$, $\delta=v_p(A^2+8aB^2)$, and let $s$ be a square root of $-a/2$ modulo $p^{r+1}$. Then,
\begin {enumerate}
\item If either $m\ne m'$ or $r\ne\delta$, then $s$ and $-s$ are regular lifts of the two double roots of $f(x)$ modulo $p$. 
\item Suppose $m=m'=\delta$ and $s$ is irregular. Apply to $s$ the iteration method of Lemma \ref{iteration} to obtain a regular $s_n$; then $-s_n$ is regular too. 
\item Suppose $s$ and $-s$ are  regular lifts of the two double roots of $f(x)$ modulo $p$,
and denote $\nu_{\pm}=\lfloor\min\{v_p(f(\pm s))/2,v_p(f'(\pm s))\}\rfloor$. Then,
\end {enumerate}
$$
\min\{\nu_-,\nu_+\}=\left\lfloor r/2\right\rfloor,\quad \max\{\nu_-,\nu_+\}=\left\lfloor (\Delta-r)/2\right\rfloor.
$$
\end{lemma}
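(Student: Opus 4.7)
The plan is to work out the Taylor data of $f$ at $\pm s$ and use it to describe the Newton polygons $N^-_{x\mp s}(f)$ completely. Using $s^2\equiv -a/2\pmod{p^{r+1}}$ one finds
\[ f(\pm s) = \tfrac14(2s^2+a)^2 - \tfrac14 Ap^{m'} \pm Bsp^m,\qquad f'(\pm s) = \pm 2s(2s^2+a) + Bp^m, \]
with $v_p(2s^2+a)\ge r+1$, while $f''(\pm s)/2 = 6s^2 + a \equiv -2a\pmod{p^{r+1}}$ is a unit (since $p$ is odd and $p\nmid a$). So each polygon has $u_2=0$ and total length two, and the whole analysis reduces to reading off $u_0$ and $u_1$.

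For part (1), I would split into cases according to $m$ vs $m'$ and $\delta$ vs $r$. When $m\ne m'$, one of the terms $\pm Bsp^m$ or $-Ap^{m'}/4$ strictly dominates $f(\pm s)$, giving $u_0=u_1=r$ and a one-sided polygon of slope $-r/2$; the residual polynomial is then $c_0+c_2y^2$ (or has degree one if $r$ is odd), separable because $p>2$. When $m=m'$, the identity
\[ (4Bs-A)(4Bs+A) \,=\, -(A^2+8aB^2) + 8B^2(2s^2+a), \]
together with $p\nmid 4Bs$, forces $\{v_p(4Bs-A),v_p(4Bs+A)\}=\{0,\delta\}$ whenever $\delta<r+1$, so after possibly swapping $s$ with $-s$ we may take $v_p(4Bs-A)=0$. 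The subcase $\delta<r$ yields one-sided polygons at $\pm s$ of slopes $-r/2$ and $-(r+\delta)/2$, both regular by the same argument. The subcase $\delta>r$ gives a polygon at $-s$ with $u_0\ge 2r+1$, $u_1=r$, hence two-sided with both sides of length one, and automatically regular.

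For part (2), the key observation is that $-s$ is already regular: its polygon is one-sided of slope $-r/2$ (handled in part (1)). Each iteration step perturbs $s$ by an element of $p$-adic valuation at least $r$, so at $-s$ Taylor expansion gives
\[ f(-s_n) = f(-s) + O(p^{2r}),\qquad f'(-s_n) = f'(-s) + f''(-s)(s-s_n) + O(p^{2r}). \]
Since $v_p(f(-s))=v_p(f'(-s))=r$, one has immediately $v_p(f(-s_n))=r$. For the derivative, substitute the first correction $s_1-s=y_0p^r$ where $y_0\equiv B/(4a)\pmod p$ is the multiple root of the residual polynomial at $s$; then $f'(-s_n)\equiv(B+4ay_0)p^r\equiv 2Bp^r\pmod{p^{r+1}}$, nonzero since $p>2$ and $p\nmid B$. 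Thus $u_0(-s_n)=u_1(-s_n)=r$, the polygon at $-s_n$ stays one-sided of slope $-r/2$, and $-s_n$ is regular exactly as in part (1).

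For part (3), from the two shapes of polygon we read off $\nu_\pm=\lfloor y_1(\pm s)\rfloor$, with $y_1=u_0/2$ (one-sided) or $y_1=u_1$ (two-sided). Substituting the case-by-case values from parts (1) and (2), $\min(\nu_+,\nu_-)=\lfloor r/2\rfloor$ in every case. For the maximum, I would substitute $b=Bp^m$, $c=(a^2-Ap^{m'})/4$ into the formula for $\disc(f)$, obtaining
\[ \disc(f) = 4a^2A^2p^{2m'} + 32a^3B^2p^{2m} - 36aAB^2p^{m'+2m} - 4A^3p^{3m'} - 27B^4p^{4m}, \]
from which $\Delta=2r$, $2r+\delta$, or $3r$ in the three regular cases of (1); the equality $\max(\nu_+,\nu_-)=\lfloor(\Delta-r)/2\rfloor$ then follows by direct inspection. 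In the iteration case of (2), $\nu_{-s_n}=\lfloor r/2\rfloor$ is already fixed, so Theorem \ref{index} gives $\nu_{s_n}=\ind_p(f)-\lfloor r/2\rfloor$ and we only need the parity identity $\ind_p(f)=\lfloor r/2\rfloor+\lfloor(\Delta-r)/2\rfloor$. The main obstacle is to establish this identity without a direct computation of $\Delta$ (which may be arbitrarily large here); the cleanest route is to read off the ramification data of the primes above $p$ from the Ore factorization and identify $v_p(\disc K)$ modulo $2$.
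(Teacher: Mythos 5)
Your proposal is correct and its overall architecture coincides with the paper's: read the polygons $N^-_{x\mp s}(f)$ off the Taylor data at $\pm s$, isolate the only possibly irregular lift as the one with $v_p(f)=2r$ (which forces $m=m'$ and $\delta=r$), and finish part (3) by combining the Theorem of the index with the ramification data supplied by Ore's theorem. Two local steps are handled differently. In part (2) you track the first iterate explicitly via the multiple root $y_0\equiv B/(4a)\pmod p$ to get $f'(-s_n)\equiv 2Bp^r\pmod{p^{r+1}}$; the paper reaches the same conclusion with no computation from the symmetry identities $f(x)-f(-x)=2bx$ and $f'(x)+f'(-x)=2b$, which convert $v_p(f(s_n))>2r$ and $v_p(f'(s_n))>r$ directly into $v_p(f(-s_n))=v_p(f'(-s_n))=r$. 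In part (3) your explicit expansion of $\disc(f)$ is a correct and pleasant verification in the three cases of (1) (though note that when $m=m'=\delta$ but $s$ is nevertheless regular, the terms of valuation $2r+\delta$ and $3r$ can interfere, so ``direct inspection'' of that formula alone does not pin down $\Delta$ there), whereas the paper argues uniformly through $\nu_-+\nu_+=\ind_p(f)=(\Delta-v_p(\disc K))/2$. The parity identity you flag as ``the main obstacle'' is settled exactly along the route you name: since $p>2$ and $e\le 2$ the ramification is tame, so $v_p(\disc K)$ counts the ramified ideals among $\mathfrak{a}_{\pm s}$; the ideal attached to the slope $-r/2$ polygon ramifies precisely when $r$ is odd, and the other precisely when its polygon is one-sided with $v_p(f)$ odd, giving $\Delta-r=2\max\{\nu_-,\nu_+\}+\varepsilon$ with $\varepsilon\in\{0,1\}$ and hence the claim. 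Finally, a harmless slip: when $m'<m$ one has $u_1\ge r+1$ rather than $u_1=r$ at $\pm s$, which only strengthens the one-sidedness you need.
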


\begin{proof}
Assume first $m\ne m'$. From $4f(s)=(2s^2+a)^2+4bs-(a^2-4c)$, $f'(s)=2s(2s^2+a)+b$, we get $v_p(f(\pm s))=r$, $v_p(f'(\pm s))\ge r$, so that $s$ and $-s$ are regular.

Assume now $m=m'$. We have $v_p(f'(\pm s))=r$ in all cases. At least one of the two integers $4Bs-A$, $-4Bs-A$ is not divisible by $p$. Suppose that $s$ is chosen so that $v_p(-4Bs-A)=0$; then $v_p(f(-s))=r$ and $-s$ is regular. On the other hand, 
$v_p(4Bs-A)=v_p(16B^2s^2-A^2)$ is equal to $\delta$, if $\delta\le r$, and it is greater than $r$ otherwise. Hence, $v_p(f(s))=r+\delta$, if $\delta\le r$, and $v_p(f(s))>2r$ otherwise. Thus, $s$ might be irregular only when $\delta=r$. This proves (1).

If $s$ is irregular, we have $v_p(f(s))=2r$, $v_p(f'(s))=r$. By (\ref{grow}), all iterates $s_n$ obtained by the method of Lemma \ref{iteration} satisfy $v_p(f(s_n))>2r$, $v_p(f'(s_n))>r$. Since $f(s_n)-f(-s_n)=2bs_n$, $f'(s_n)+f'(-s_n)=2b$, we get $v_p(f(-s_n))=v_p(f'(-s_n))=r$ and $-s_n$ is regular. This proves (2).

The above arguments show that $\nu_-=\lfloor r/2\rfloor$ (recall that we chose $s$ in such a way that $\nu_-\le\nu_+$).  The ideal $p\Z_K$ splits as the product of two ideals $\mathfrak{a}_s$, $\mathfrak{a}_{-s}$,
both of norm $p^2$, whose factorization is determined by the respective Newton polygons $N_{x-s}^-(f)$, $N_{x+s}^-(f)$. Thus, $v_p(\disc(K))=0,1,2$, according to the number $0,1,2$ of ramified ideals in the pair $\mathfrak{a}_s$, $\mathfrak{a}_{-s}$. On the other hand, $\mathfrak{a}_{-s}$ ramifies if and only if $r$ is odd.
This allows to compute $\nu_+$ form $\nu_-+\nu_+=\ind_p(f)=(\Delta-v_p(\disc(K))/2$.
This proves (3).\end{proof}

Lemma \ref{regreg} shows how to find an integer $s$ such that both $s$ and $-s$ are regular, and in most of the cases $s$ is just a sufficiently good $p$-adic approximation to a square root of $-a/2$.
Theorem \ref{main} provides then a $p$-integral basis:
$$q_{1,x-s}(\t)/p^{\nu_+},\ q_{2,x-s}(\t),\ q_{1,x+s}(\t)/p^{\nu_-},\ q_{2,x+s}(\t),$$where $q_{1,x\pm s}(x)$ are given in (\ref{q1}), and $q_{2,x\pm s}(x)$ are monic polynomials with integer coefficients. By exchanging $s$ and $-s$  we may assume that $\nu_-\le \nu_+$, as we did in the proof of Lemma \ref{regreg}. Then,
$$
(2s\t^2+2s^3+2as)/p^{\nu_-}=(q_{1,x-s}(\t)-q_{1,x+s}(\t))/p^{\nu_-},
$$is integral. Since $p\nmid 2s$, the following family is a $p$-integral basis too.
\begin{equation}\label{Tdoubledoublep}
1,\,\t,\,(\t^2+s^2+a)/p^{\,\nu_-},\, (\t^3+s\t^2+(a+s^2)\t+s^3+as+b)/p^{\nu_+},
\end{equation}
with $\nu_-=\left\lfloor r/2\right\rfloor$, $\nu_+=\left\lfloor(\Delta-r)/2\right\rfloor$.

\subsubsection{Two double roots, case $p=2$} Recall that $f(x)\equiv x^2(x+1)^2\md{2}$, so that any even integer is a lift of the double root $0$ modulo $2$ and any odd integer  is a lift of the double root $1$ modulo $2$.

If $v_2(c)=1$, then $N_x^-(f)$ is one-sided with slope $1/2$, so that $0$ is an even regular lift and $\ind_x(f)=0$. Similarly, if $v_2(a+b+c+1)=1$ then $1$ is an odd regular lift and $\ind_{x+1}(f)=0$. If $v_2(c)>1$ and $v_2(a+b+c+1)>1$, then:
$$
\begin{array}{l} 
a\equiv1\md4\imp v_2(b)=1\imp v_2(f'(0))=1\imp 0\mbox{ is regular},\\
a\equiv-1\md4\imp v_2(b)>1\imp v_2(f'(1))=1\imp 1\mbox{ is regular}.
\end{array}
$$ 
Hence, we apply the iteration method of Lemma \ref{iteration} to obtain a regular lift $s$ of an specific double root of $f(x)$ modulo $2$, and the $2$-integral basis is given by

\begin{small}
\begin{equation}\label{Tdoubledouble2}
\begin{tabular}{|c|c|c|c|}\hline
$v_2(c)$& $v_2(a+b+c+1)$&$s$ &$p$-integral basis\\\hline
$\quad 1$&$\quad 1$&&$1,\,\t,\,\t ^2,\,\t^3$\\\hline
$\quad 1$&$>1$&$s$ odd&$1,\,\t,\,\t ^2,\,\alpha/2^\nu$\\\hline
$>1$&$\quad 1$&$s$ even&$1,\,\t,\,\t ^2,\,\alpha/2^\nu$\\\hline
$>1$&$>1$&$\begin{array}{c}a\equiv1\md4\imp s\mbox{ odd}\\
a\equiv3\md4\imp s\mbox{ even}\end{array}$&$1,\,\t,\,\dfrac{\t^2+\t}2,\, \alpha/2^\nu$
\\\hline
\end{tabular}
\end{equation}\medskip
\end{small}

\noindent where $\alpha=\t^3+s\t^2+(a+s^2)\t+s^3+as+b$, and $\nu=\min\{v_2(f(s))/2,v_2(f'(s))\}$. In fact, if $a\equiv1\md4$, we take $0$ as an even regular lift and we find an odd regular lift $s$. Theorem \ref{main} yields a $2$-integral basis: $q_{1,x}(\t)/2,\ q_{2,x}(\t),\ q_{1,x-s}(\t)/2^\nu,q_{2,x-s}(\t)$, where $q_{1,x}$, $q_{1,x-s}$ are given in (\ref{q1}) and $q_{2,x},q_{2,x-s}$ are  monic polynomials. In particular, $(\t^2+\t)/2$ is integral, because $a$, $s$ are odd and
$$
(s\t^2+s^2\t+s^3+as)/2=(q_{1,x-s}(\t)-q_{1,x}(\t))/2
$$
is integral. Hence,
$1,\,\t,\,(\t^2+\t)/2,\,q_{1,x-s}(\t)/2^\nu$ is a $2$-integral basis too.
The case $a\equiv-1\md4$ follows by symmetric arguments.

\subsection{$f(x)$ has a triple root modulo $p$}
Suppose $f(x)$ satisfies (D) of Lemma \ref{factorization}.
Recall that $f(x)\equiv (x-s)^3(x+3s)\md{p}$, for some integer $s$ not divisible by $p$.
The $(x-s)$-Newton polygon of $f(x)$ has length three and the two first quotients of the $(x-s)$-adic development of $f(x)$ are
$$
q_1(x)=x^3+sx^2+(s^2+a)x+s^3+as+b,\quad q_2(x)=x^2+2sx+3s^2+a.
$$Therefore, if we choose a regular $s$, Theorem \ref{main} yields a $p$-integral basis: 
\begin{equation}\label{triplebasis}
1,\quad\t,\quad\dfrac{\t^2+2s\t+3s^2+a}{p^{\nu_2}},\quad\dfrac{\t^3+s\t^2+(s^2+a)\t+s^3+as+b}{p^{\nu_1}},
\end{equation}
where $\nu_i=\lfloor y_i\rfloor$ and $y_i\in\Q$ is the ordinate of the point on $N_{x-s}^-(f)$ of abscissa $i$, for $i=1,2$. We discuss the choice of a regular $s$ and the computation of $\nu_1,\nu_2$ separately for the cases $p>3$ and $p=3$.  

\subsubsection{Triple root, $p>3$. } The coefficients $a,b,c$ satisfy (\ref{tripleabc}); in particular, $-a/6$ is a square in $\Z_p$. 

\begin{lemma}\label{Ttriple}
Let $s_0$ be an integer satisfying (\ref{tripleabc}) and such that $u_2:=v_p(a+6s_0^2)=v_p(f''(s_0)/2)>\Delta/6$. Denote also
\begin{equation}\label{rescoeffs0}
u_0:=v_p(f(s_0)),\ u_1:=v_p(f'(s_0));\quad\sigma_0:=f(s_0)/p^{u_0},\ \sigma_1:=f'(s_0)/p^{u_1}.
\end{equation}
\begin{enumerate}
\item The integer $s_0$ is irregular if and only if $2u_0=3u_1$ and $p\mid\sigma_1^3+27s_0\sigma_0^2$.
\item If $s_0$ is regular, take $s=s_0$; if $s_0$ is irregular, apply to $s_0$ the iteration  method of Lemma \ref{iteration} to obtain a regular $s$. Then, the $p$-integral basis is given by (\ref{triplebasis}), with
\begin{center}
\begin{small}
\begin{tabular}{|c|c|c|}\hline
$u_0,\,u_1$&$\nu_1$&$\nu_2$\\\hline
$2u_0<3u_1$&$\lfloor 2u_0/3\rfloor$&$\lfloor u_0/3\rfloor$\\\hline
$2u_0\ge 3u_1$&$\lfloor (\Delta-u_1)/2\rfloor$&$\lfloor u_1/2\rfloor$\\\hline
\end{tabular}
\end{small}
\end{center}
\end{enumerate}
\end{lemma}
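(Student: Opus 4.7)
The plan is to analyze $N:=N^-_{x-s_0}(f)$, which is the lower convex envelope of $(0,u_0),(1,u_1),(2,u_2),(3,0)$. Note that $(3,0)$ is automatically a vertex because $f'''(s_0)/6=4s_0$ is a $p$-adic unit (using $p>3$ and $p\nmid s_0$, which follows from (\ref{tripleabc})).

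First I would exclude $(2,u_2)$ as a vertex of $N$. By Theorem \ref{index} and the formula $\Delta=2\ind_p(f)+v_p(\disc(K))$, one has $\Delta\ge 2\ind_{x-s_0}(f)$; the hypothesis $u_2>\Delta/6$ then forces $(2,u_2)$ to lie strictly above whichever candidate side covers abscissa $2$. Thus only three cases for $N$ remain: case (a) $2u_0<3u_1$, where $N$ is the single side from $(0,u_0)$ to $(3,0)$; case (b) $2u_0=3u_1$, the same one-sided polygon with $(1,u_1)$ lying on it (forcing $3\mid u_0$); and case (c) $2u_0>3u_1$, where $N$ has two sides, namely $(0,u_0)$--$(1,u_1)$ and $(1,u_1)$--$(3,0)$.

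Next I would verify regularity case by case. In (a) the only nontrivial residual polynomial, occurring when $3\mid u_0$, is $4s_0 y^3+\sigma_0$, trivially separable. In (c), side 1 has length one, and side 2 has slope $-u_1/2$ with residual polynomial $4s_0 y^2+\sigma_1$ when $u_1$ is even; its discriminant $-16s_0\sigma_1$ is a $p$-adic unit, so $s_0$ is regular. In (b) the residual polynomial is $R(y)=4s_0y^3+\sigma_1 y+\sigma_0$, and a direct computation yields $\disc(R)=-16s_0(\sigma_1^3+27s_0\sigma_0^2)$; since $p\nmid 16 s_0$, this proves part (1). If $s_0$ is irregular, Lemma \ref{iteration} produces a regular $s$ in finitely many steps, and the subsequent analysis applies verbatim to $s$ in place of $s_0$.

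For part (2), Theorem \ref{main} gives $\nu_i=\lfloor y_i\rfloor$. In cases (a) and (b) one reads directly $y_1=2u_0/3$ and $y_2=u_0/3$, matching row 1 of the table. In case (c), $y_1=u_1\in\Z$ and $y_2=u_1/2$; to rewrite $u_1$ as $\lfloor(\Delta-u_1)/2\rfloor$ I would use a lattice-point count to obtain $\ind_p(f)=u_1+\lfloor u_1/2\rfloor$ and a tame-ramification argument on side 2 to get $v_p(\disc(K))=0$ or $1$ according to the parity of $u_1$, whence $\Delta=3u_1$ in both cases. The main obstacle is precisely this parity bookkeeping in case (c) needed to establish $\Delta=3u_1$ exactly; once that is done, everything else reduces to routine polygon and discriminant computations together with the termination guarantee from Lemma \ref{iteration}.
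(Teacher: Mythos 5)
Your treatment of part (1) and of the regular cases follows the paper's own proof closely and is correct: the exclusion of $(2,u_2)$ from $N^-_{x-s_0}(f)$ via $\Delta\ge 2\ind_{x-s_0}(f)$ and $u_2>\Delta/6$, the three shapes of the polygon, the residual polynomials $4\overline{s}_0y^3+\overline{\sigma}_0$, $4\overline{s}_0y^2+\overline{\sigma}_1$, $4\overline{s}_0y^3+\overline{\sigma}_1y+\overline{\sigma}_0$, and the discriminant $-16s_0(\sigma_1^3+27s_0\sigma_0^2)$ all match. One bookkeeping point: the regular subcase $2u_0=3u_1$ falls under the \emph{second} row of the table, so besides reading $y_1=2u_0/3$, $y_2=u_0/3$ off the polygon you still owe the identity $\lfloor(\Delta-u_1)/2\rfloor=u_1$ there; it follows from $v_p(\disc(K))=0$ (the unique side has ramification index one) and $\ind_p(f)=u_0$, by the same kind of argument you sketch for case (c).

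The genuine gap is in the irregular case of part (2). The claim that after Lemma \ref{iteration} produces a regular $s$ ``the subsequent analysis applies verbatim to $s$ in place of $s_0$'' fails for two reasons. First, the table expresses $\nu_1,\nu_2$ in terms of the \emph{original} data $u_0=v_p(f(s_0))$, $u_1=v_p(f'(s_0))$, whereas rerunning your analysis on $s$ would yield formulas in $v_p(f(s))$, $v_p(f'(s))$, which are strictly larger. Second, and more seriously, the regular iterate $s$ no longer satisfies the standing hypothesis: by (\ref{iterationthree}) and Remark \ref{remark}(2), every iterate $s_n$ has $v_p(f(s_n))>u_0$, $v_p(f'(s_n))>u_1$ and $v_p(f''(s_n)/2)=u_0/3=u_1/2$, so the point $\bigl(2,u_1/2\bigr)$ becomes a \emph{vertex} of $N^-_{x-s}(f)$; in particular $v_p(f''(s)/2)\le \ind_{x-s}(f)/3\le\Delta/6$, and your three-case trichotomy (which presupposes that the point of abscissa $2$ lies strictly above the polygon) simply does not apply to $s$. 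The correct argument reads $\nu_2=u_1/2$ directly from the fact that $N^-_{x-s}(f)$ ends with the side joining $(2,u_1/2)$ to $(3,0)$, and then recovers $\nu_1$ from $\nu_1+\nu_2=\ind_p(f)=(\Delta-v_p(\disc(K)))/2$ together with $v_p(\disc(K))\in\{0,1\}$, since the factor of $f(x)$ attached to $x-s$ contributes an ideal of norm $p^2$ alongside two unramified primes. Without this step the entries $\lfloor(\Delta-u_1)/2\rfloor$ and $\lfloor u_1/2\rfloor$ of the second row are not justified when $s_0$ is irregular.
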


\begin{proof}
By Theorem \ref{index} we have $\Delta\ge 2\ind_p(f)\ge 2\ind_{x-s_0}(f)$, so that $\,u_2>\ind_{x-s_0}(f)/3$, by the hypothesis. This implies that the point $(2,u_2)$ lies strictly above $N_{x-s_0}^-(f)$, whose shape is given in Figure 6. In fact, the conditions $u_2\le u_0/3$, $u_2\le u_1/2$ lead to a contradiction: $\ind_{x-s_0}(f)=u_2+\lfloor (u_0+u_2)/2\rfloor\ge 3u_2$, or $\ind_{x-s_0}(f)=u_1+u_2\ge 3u_2$. 

\begin{center}
\setlength{\unitlength}{5.mm}
\begin{picture}(16,5)
\put(-.15,2.85){$\bullet$}\put(.85,2.85){$\bullet$}\put(1.85,3.85){$\bullet$}
\put(2.85,.85){$\bullet$}
\put(-1,1){\line(1,0){5}}
\put(0,0.2){\line(0,1){4.4}}
\put(3,1){\line(-3,2){3}}\put(3.02,1){\line(-3,2){3}}
\put(-.4,.4){\begin{footnotesize}$0$\end{footnotesize}}
\put(.9,.4){\begin{footnotesize}$1$\end{footnotesize}}
\put(1.9,.4){\begin{footnotesize}$2$\end{footnotesize}}
\put(2.9,.4){\begin{footnotesize}$3$\end{footnotesize}}
\put(1,.9){\line(0,1){.2}}\put(2,.9){\line(0,1){.2}}
\put(0,-.6){$2u_0<3u_1$}

\put(6.85,2.85){$\bullet$}\put(7.85,2.15){$\bullet$}\put(8.85,3.85){$\bullet$}
\put(9.85,.85){$\bullet$}
\put(6,1){\line(1,0){5}}
\put(7,0.2){\line(0,1){4.4}}
\put(10,1){\line(-3,2){3}}\put(10.02,1){\line(-3,2){3}}
\put(6.6,.4){\begin{footnotesize}$0$\end{footnotesize}}
\put(7.9,.4){\begin{footnotesize}$1$\end{footnotesize}}
\put(8.9,.4){\begin{footnotesize}$2$\end{footnotesize}}
\put(9.9,.4){\begin{footnotesize}$3$\end{footnotesize}}
\put(8,.9){\line(0,1){.2}}\put(9,.9){\line(0,1){.2}}
\put(7,-.6){$2u_0=3u_1$}

\put(13.85,3.85){$\bullet$}\put(14.85,1.85){$\bullet$}
\put(16.85,.85){$\bullet$}\put(15.85,3.85){$\bullet$}
\put(14,0.2){\line(0,1){4.4}}
\put(13,1){\line(1,0){5}}
\put(17,1){\line(-2,1){2}}\put(17,1.02){\line(-2,1){2}}
\put(15,2){\line(-1,2){1}}\put(15,2.02){\line(-1,2){1}}
\put(13.6,.4){\begin{footnotesize}$0$\end{footnotesize}}
\put(14.9,.4){\begin{footnotesize}$1$\end{footnotesize}}
\put(15.9,.4){\begin{footnotesize}$2$\end{footnotesize}}
\put(16.9,.4){\begin{footnotesize}$3$\end{footnotesize}}
\put(15,.9){\line(0,1){.2}}\put(16,.9){\line(0,1){.2}}
\put(14,-.6){$2u_0>3u_1$}
\end{picture}
\end{center}\vskip0.cm
\begin{center}
Figure 6
\end{center}

Suppose $2u_0<3u_1$. The Newton polygon has only one side, with slope $-u_0/3$. If $3\nmid u_0$, the side has degree one; if $3\mid u_0$, the side has degree three and residual polynomial $4\overline{s}_0y^3+\overline{\sigma_0}$, which is always separable in $\F_p[y]$.
Hence, $s_0$ is regular and $\nu_2=\lfloor u_0/3\rfloor$, $\nu_1=\lfloor 2u_0/3\rfloor$. 

Suppose $2u_0>3u_1$. The Newton polygon has two sides, one of them of degree one; the other one has either degree one (if $u_1$ is odd) or it has degree two and residual polynomial $4\bar{s}_0y^2+\overline{\sigma}_1$, which is always separable in $\F_p[y]$.
Hence, $s_0$ is regular and $\nu_2=\lfloor u_1/2\rfloor$, $\nu_1=u_1$. By the Theorem of Ore (Theorem \ref{ore}), $p$ ramifies in $K$ if and only if $u_1$ is odd, in which case $p\Z_K=\p^2\q_1\q_2$ and $v_p(\disc(K)=1$; hence, $\nu_1+\nu_2=\ind_p(f)=\lfloor \Delta/2\rfloor$, and $\nu_1=\lfloor (\Delta-u_1)/2\rfloor$.

Suppose $2u_0=3u_1$. The Newton polygon has only one side, of degree three and residual polynomial $4\overline{s}_0y^3+\overline{\sigma_1}y+\overline{\sigma_0}$. Hence, $s_0$ is regular if and only if $(\overline{\sigma}_1)^3+27\overline{s}_0(\overline{\sigma}_0)^2\ne0$.
If $s_0$ is regular, then $p$ is unramified in $K$ and $\nu_2=u_1/2$, $\nu_1=u_1=(\Delta-u_1)/2$. If $s_0$ is irregular, then all iterates of the process of Lemma \ref{iteration} satisfy $v_p(f''(s_n)/2)=u_1/2$, $v_p(f'(s_n))>u_1$, $v_p(f(s_n))>u_0$ (cf. Remark \ref{remark}); hence $N_{x-s}^-(f)$ will have one side with end points $(2,u_1/2)$, $(3,0)$, so that $\nu_2=u_1/2$. Also, $v_p(\disc(K))=0,1$, because  $p\Z_K=\mathfrak{a}\q_1\q_2$ for some ideal $\mathfrak{a}$ of norm $p^2$, whose factorization depends on the final shape of $N_{x-s}^-(f)$. Hence, $\nu_1=\lfloor (\Delta-u_1)/2\rfloor$.
\end{proof}

\subsubsection{Triple root, $p=3$. }Suppose $p=3$ and $3\mid a$, $3\nmid b$, $3\mid c$. 
If $a\equiv3\md9$, then $-a/6$ is a square in $\Z_3$ and we can argue as in the case $p>3$.

\begin{lemma}\label{Ttriple3}
Suppose $a\equiv3\md9$, $3\nmid b$, $3\mid c$.
Let $s_0$ be an integer such that $u_2:=v_3(f''(s_0)/2)=v_3(a+6s_0^2)>\Delta/6$, and $s_0\equiv -b\md3$. For $i=0,1$, denote also $u_i$, $\sigma_i$ as in (\ref{rescoeffs0}).

\begin{enumerate}
\item The integer $s_0$ is irregular if and only if $2u_0<3u_1$ and $3\mid u_0$. 
\item If $s_0$ is regular, take $s=s_0$; if $s_0$ is irregular, take $s_1=s_0+y3^{u_0/3}$, where $y=1$ if $\sigma_0\equiv b\md3$ and $y=-1$ otherwise. If $s_1$ is regular, take $s=s_1$; if $s_1$ is irregular, apply to $s_1$ the iteration  method of Lemma \ref{iteration} to obtain a regular $s$. Then, the $p$-integral basis is given by (\ref{triplebasis}), with
\end{enumerate}
\end{lemma}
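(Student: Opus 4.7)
The overall strategy is to mirror the proof of Lemma~\ref{Ttriple}, replacing its analysis of residual polynomials when $p>3$ by its characteristic-$3$ counterpart, in which Frobenius and the vanishing of the cube derivative shift the boundary between regular and irregular. First note that $s_0\equiv -b\md 3$ together with $3\nmid b$ gives $3\nmid s_0$, hence $u_3=v_3(4s_0)=0$. Repeating verbatim the opening argument of Lemma~\ref{Ttriple}, the hypothesis $u_2>\Delta/6\ge\ind_{x-s_0}(f)/3$ forces $(2,u_2)$ to lie strictly above $N^-_{x-s_0}(f)$: were it a vertex, the same counting gives $\ind_{x-s_0}(f)\ge 3u_2$, contradicting $\Delta\ge 2\ind_p(f)$. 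So the vertices of $N^-_{x-s_0}(f)$ lie among $(0,u_0)$, $(1,u_1)$, $(3,0)$, and the shape is determined by the comparison of $2u_0$ and $3u_1$.

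To prove~(1) I would split into three cases. If $2u_0>3u_1$ the polygon has two sides, both of degree at most two, and the only potentially non-linear residual polynomial, $\bar s_0\,y^2+\bar\sigma_1$, is separable because $\bar\sigma_1\ne 0$ and $p\ne 2$. If $2u_0=3u_1$ the polygon is the single side from $(0,u_0)$ to $(3,0)$ passing through $(1,u_1)$, with residual polynomial $\bar s_0\,y^3+\bar\sigma_1\,y+\bar\sigma_0$; in characteristic~$3$ its derivative reduces to the nonzero constant $\bar\sigma_1$, so it is separable. If $2u_0<3u_1$ the polygon is a single side of slope $-u_0/3$: for $3\nmid u_0$ it has degree one, whereas for $3\mid u_0$ the residual polynomial is $\bar s_0\,y^3+\bar\sigma_0$, which by the Frobenius identity $y^3+c=(y+c)^3$ in $\F_3[y]$ equals $\bar s_0(y+\bar\sigma_0/\bar s_0)^3$ and so is inseparable. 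This completes the exhaustive case analysis and establishes~(1).

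For~(2), in the irregular case Lemma~\ref{iteration} prescribes $s_1=s_0+y\cdot 3^{u_0/3}$ where $y$ lifts the triple root $-\bar\sigma_0/\bar s_0$ of the residual polynomial. From $\bar s_0=-\bar b$ this root equals $\bar\sigma_0/\bar b$, and since $\bar b,\bar\sigma_0\in\F_3^*=\{1,-1\}$ it is $1$ when $\sigma_0\equiv b\md 3$ and $-1$ otherwise, exactly matching the statement. Lemma~\ref{iteration} then guarantees termination at a regular $s$, after which Theorem~\ref{main} yields a $\zp$-basis of $\Z_K\otimes\zp$; the family~(\ref{triplebasis}) is obtained by the unimodular change of basis that replaces the integral quotients $q_{1,3}(\t)=\t+3s$ and $q_{2,1}(\t)=\t^3+a\t+b$ by $\t$ and~$1$, or equivalently by checking directly via Proposition~\ref{denominator} and Theorem~\ref{index} that the $\zp$-module $M$ generated by~(\ref{triplebasis}) satisfies $[M:\zp[\t]]=p^{\nu_1+\nu_2}=p^{\ind_p(f)}$. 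I expect the main obstacle to be the case $2u_0=3u_1$: the residual polynomial is then a genuine cubic, and only the non-vanishing of the middle coefficient $\bar\sigma_1$ (present because $(1,u_1)$ lies on the polygon) rescues separability in characteristic~$3$; missing this subtlety would wrongly add it to the irregular list.
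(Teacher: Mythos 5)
Your part (1) is correct and follows the paper's route: the hypothesis $u_2>\Delta/6$ pushes $(2,u_2)$ strictly above $N^-_{x-s_0}(f)$, the case split on $2u_0$ versus $3u_1$ is exhaustive, and the only characteristic-$3$ subtlety --- that $y^3+\sigma$ is a perfect cube while $y^3+\sigma y+\tau$ with $\sigma\ne0$ is separable --- is exactly the point the paper highlights as the ``essential difference'' from Lemma \ref{Ttriple}. Your identification of the lift $y$ of the triple root ($y=\bar\sigma_0\bar b$, hence $\pm1$ according to whether $\sigma_0\equiv b\md3$) also matches the statement.

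The gap is in part (2). The lemma does not merely assert that the iteration terminates and that (\ref{triplebasis}) is a $p$-integral basis for the resulting $s$; it asserts the explicit table of values of $\nu_1,\nu_2$ in terms of $u_0,u_1$ and the valuations $v_3(f(s_1))$, $v_3(f'(s_1))$ at the \emph{first iterate only}. Your appeal to Lemma \ref{iteration} plus Theorem \ref{main} gives $\nu_i=\lfloor y_i\rfloor$ for the ordinates of the final polygon $N_{x-s}^-(f)$, but says nothing about how to read these off from data at $s_0$ and $s_1$ when further iterations are needed; that is where the paper's proof does its real work. Concretely, three steps are missing: (a) that $s_1$ is already regular whenever $v_3(f(s_1))\le u_0+2$ or $v_3(f'(s_1))\le(2u_0/3)+1$, which justifies rows $3$--$5$ of the table; (b) that if $s_1$ is irregular then $v_3(f''(s_1))=(u_0/3)+1$ --- proved by expanding $f''(s_1)=f''(s_0)+24s_0y\,3^{u_0/3}+12y^2 3^{2u_0/3}$ and noting that strict inequality would force $v_3(f''(s_0))=(u_0/3)+1$, while irregularity of $s_1$ gives $\ind_{x-s_1}(f)\ge u_0+3$ and hence $\Delta\ge 2u_0+6=6u_2$, contradicting $u_2>\Delta/6$; this is the one place the hypothesis $u_2>\Delta/6$ is genuinely used beyond the opening convexity argument, and it is what pins down $\nu_2=(u_0/3)+1$ for all subsequent iterates in the last row; and (c) the determination of $\nu_1$ in that row from $\nu_1+\nu_2=\ind_p(f)=\lfloor\Delta/2\rfloor$, which requires observing that $p\Z_K=\mathfrak a\q$ with $\mathfrak a$ of norm $p^3$ forces $v_3(\disc(K))\in\{0,1\}$. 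Without (a)--(c) the table, which is the actual quantitative content of part (2), is unproved.
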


\begin{center}
\begin{small}
\begin{tabular}{|c|c|c|c|c|}\hline
$u_0,u_1$&$v_3(f(s_1))$&$v_3(f'(s_1))$&$\nu_1$&$\nu_2$\\\hline
$2u_0\ge 3u_1$&&&$u_1$&$\lfloor u_1/2\rfloor$\\\hline
$2u_0<3u_1$, $3\nmid u_0$&&&$\lfloor 2u_0/3\rfloor$&$\lfloor u_0/3\rfloor$\\\hline
$2u_0<3u_1$, $3\mid u_0$&$u_0+1$&&$ 2u_0/3$&$ u_0/3$\\\hline
$2u_0<3u_1$, $3\mid u_0$&$>u_0+1$&$(2u_0/3)+1$&$(2u_0/3)+1$&$ u_0/3$\\\hline
$2u_0<3u_1$, $3\mid u_0$&$u_0+2$&$>(2u_0/3)+1$&$(2u_0/3)+1$&$ u_0/3$\\\hline
$2u_0<3u_1$, $3\mid u_0$&$>u_0+2$&$>(2u_0/3)+1$&$\lfloor \Delta/2\rfloor-(u_0/3)-1$&$ (u_0/3)+1$\\\hline
\end{tabular}
\end{small}
\end{center}

\begin{proof}
The proof runs in parallel with that of Lemma \ref{Ttriple}. An essential difference is that now a polynomial of the form $y^3+\sigma$ is always inseparable in $\F_3[y]$, wheras a polynomial of the form $y^3+\sigma y+\tau$ is separable as long as 
$\sigma\in\F_3$ is nonzero.  

If $s_0$ is irregular, we saw in (\ref{iterationthree}) that 
$$
v_3(f(s_1))>u_0,\quad v_3(f'(s_1))>2u_0/3,\quad v_3(f''(s_1))>u_0/3.
$$
If $v_3(f(s_1))\le u_0+2$, or $v_3(f'(s_1))\le(2u_0/3)+1$, then $s_1$ is regular. If $s_1$ is irregular, then $\ind_{x-s_1}(f)\ge \ind_{x-s_0}(f)+3=u_0+3$, because we get at least three more points of integers coordinates below or on the $(x-s_1)$-Newton polygon (two with abscissa $1$ and one with abscissa $2$). We claim that $v_3(f''(s_1))=(u_0/3)+1$ in this case. In fact, $f''(s_1)=f''(s)+24s_0yp^{u_0/3}+12y^2p^{2u_0/3}$; hence,  $v_3(f''(s_1))>(u_0/3)+1$ would imply $v_3(f''(s_0))=(u_0/3)+1$, and we would get  $\Delta\ge 2\ind_p(f)\ge2\ind_{x-s_1}(f)\ge 2u_0+6= 6v_3(f''(s_0))$, in contradiction with our hypothesis. Finally, as in the previous lemma, in this case we have $v_3(\disc(K))=0,1$,
so that $\nu_1+\nu_2=\ind_p(f)=\lfloor\Delta/2\rfloor$. This determines $\nu_1$ as well.
\end{proof}

Suppose now that $a\equiv0,6\md9$. For any integer $s\equiv -b\md3$, we have necessarily $v_3(f''(s))=v_3(a+6s^2)=1$. The following lemma can be proved by completely analogous arguments. 

\begin{lemma}\label{Ttriple3bis}
Suppose $a\equiv0,6\md9$, $3\nmid b$, $3\mid c$. If $v_3(f(-b))\le2$, or $v_3(f'(-b))=1$, then take $s=-b$. If $v_3(f(-b))>2$, and $v_3(f'(-b))>1$, then take  $s$ to be any integer such that $v_3(f'(s))>\Delta/2$, and $s\equiv -b\md3$. Then, $s$ is regular and the $3$-integral basis is given by (\ref{triplebasis}), with
\end{lemma}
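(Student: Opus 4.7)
The plan is to mirror the structure of the proof of Lemma \ref{Ttriple3}, adapted to the key feature of the present hypotheses: the valuation $u_2:=v_3(f''(s)/2)$ is rigidly equal to $1$ for every integer $s\equiv -b\md3$. Indeed, the condition $3\nmid b$ forces $3\nmid s$ and hence $s^2\equiv 1\md3$, so $f''(s)/2=a+6s^2\equiv a+6\md9$ has $3$-adic valuation exactly $1$ under either $a\equiv 0\md9$ or $a\equiv 6\md9$. Likewise $u_3:=v_3(f'''(s)/6)=v_3(4s)=0$. The first step of the proof should therefore be this rigidity statement, which fixes the points $(2,1)$ and $(3,0)$ on $N_{x-s}^-(f)$ for every admissible lift.

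Next I would split into the scenarios of the statement, parametrised by $u_0:=v_3(f(s))$ and $u_1:=v_3(f'(s))$. For $s=-b$ and $v_3(f(-b))\le 2$, a direct case analysis on the pairs $(u_0,u_1)$ shows that $N_{x-s}^-(f)$ has at most two sides, each of length $\le 2$ and degree $1$; regularity is automatic and $\nu_1,\nu_2$ are read off from the polygon ordinates. The subcase $v_3(f'(-b))=1$ is entirely analogous: $(1,1)$ is a vertex, and the resulting two sides, of lengths $1$ and $2$ with slopes $1-u_0$ and $-1/2$, both have degree $1$.

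The remaining scenario is the delicate one. When $v_3(f(-b))>2$ and $v_3(f'(-b))>1$, the inequality $v_3(f''(-b))=1<v_3(f'(-b))$ places us in the simple-zero setting of Hensel's lemma applied to $f'$, so one may choose $s\equiv -b\md3$ with $v_3(f'(s))>\Delta/2$. The crucial observation is then that $u_1>\Delta/2\ge\ind_p(f)\ge\ind_{x-s}(f)$ by Theorem \ref{index}; if $(1,u_1)$ lay on $N_{x-s}^-(f)$ we would get $\ind_{x-s}(f)\ge\lfloor y_1\rfloor=u_1$, a contradiction. Hence $(1,u_1)$ is strictly above the polygon, which must therefore have the shape $(0,u_0)\to(3,0)$ when $u_0\le 3$, or $(0,u_0)\to(2,1)\to(3,0)$ when $u_0\ge 4$.

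The main obstacle I anticipate is the verification of separability of the residual polynomial on the sides of length $\ge 2$ with integer slope: the length-$3$ side of slope $-1$ (occurring for $u_0=3$), with residual $c_0+c_2y^2+c_3y^3\in\F_3[y]$, and the length-$2$ side of integer slope (occurring for odd $u_0\ge 4$), with residual $c_0+c_2y^2$. In both cases $c_1=0$ because $(1,u_1)$ is strictly above the polygon, and the characteristic is $3$. The trick I would use is that the formal derivative collapses to $2c_2y$, whose unique root is $y=0$, at which the residual polynomial takes the nonzero value $c_0$; hence both residuals are separable. All remaining sides that arise have non-integer slope and length $\le 2$, hence degree $1$. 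Once regularity is confirmed in every subcase, Theorem \ref{main} delivers the basis (\ref{triplebasis}), and the values $\nu_1=\lfloor y_1\rfloor$, $\nu_2=\lfloor y_2\rfloor$ are extracted case by case from the polygon ordinates to populate the table.
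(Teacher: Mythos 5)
Your overall architecture is sound and is exactly what the paper intends: the paper omits the proof of this lemma, saying only that it is ``completely analogous'' to that of Lemma \ref{Ttriple3}, and your three ingredients --- the rigidity $v_3(f''(s)/2)=v_3(a+6s^2)=1$ and $v_3(f'''(s)/6)=v_3(4s)=0$ for every $s\equiv -b\md{3}$, the index bound forcing $(1,u_1)$ strictly above the polygon when $u_1>\Delta/2$, and the characteristic-$3$ separability of $c_3y^3+c_2y^2+c_0$ and $c_2y^2+c_0$ via the collapsed derivative $2c_2y$ --- are precisely what is needed. (One slip of no consequence: when $v_3(f(-b))\le 2$ and $u_1\ge u_0$ the polygon is a \emph{single} side of length $3$, not ``each of length $\le2$''; its slope $-u_0/3$ has $e=3$, so the degree is still $1$ and regularity holds.)

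Two steps need repair. First, the existence of $s\equiv -b\md{3}$ with $v_3(f'(s))>\Delta/2$ is \emph{not} the simple-zero setting of Hensel's lemma: $-b$ is a triple root of $\overline{f'}=\overline{(x+b)^3}$ modulo $3$, and the condition you invoke, $v_3(f''(-b))=1<v_3(f'(-b))$, does not by itself produce a root --- compare $g(y)=y^2+3y+9$ at $y_0=0$, where $v_3(g'(0))=1<2=v_3(g(0))$ and yet $v_3(g(y))\le 3$ for all $y\in\Z_3$. The genuine Hensel criterion $v_3(f'(-b))>2v_3(f''(-b))=2$ fails in the boundary case $v_3(f'(-b))=2$. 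The cure is the extra datum $v_3(f'''(-b)/2)=v_3(12b)=1$ and $v_3(f''''/24)=v_3(1)=0$: the Newton polygon of $f'(-b+y)$ in $y$ has a side of length one joining $(0,u_1)$ to $(1,1)$, which by Theorem \ref{ore} produces a linear factor, i.e.\ a root of $f'$ in $-b+3\Z_3$ (equivalently, the quadratic term of the Newton correction has valuation $2u_1-1>u_1$, so the iteration always gains). Second, the entry $\nu_1=\lfloor\Delta/2\rfloor-1$ in the last row is not a polygon ordinate: the polygon only gives $\nu_1=\lfloor y_1\rfloor$ in terms of $u_0=v_3(f(s))$, which is absent from the table. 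You must close the computation as in the proof of Lemma \ref{Ttriple3}: regularity gives $\ind_3(f)=\nu_1+\nu_2=\nu_1+1$; the polygon shows that $3$ is either unramified (all sides with $e=1$, when $u_0$ is odd) or tamely ramified with $e=2$ (when $u_0$ is even), so $v_3(\disc(K))\in\{0,1\}$ accordingly; and $\Delta=2\ind_3(f)+v_3(\disc(K))$ then yields $\lfloor\Delta/2\rfloor=\nu_1+1$ in both cases.
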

\begin{center}
\begin{small}
\begin{tabular}{|c|c|c|c|}\hline
$v_3(f(s))$&$v_3(f'(s))$&$\nu_1$&$\nu_2$\\\hline
$1$&&$0$&$0$\\\hline
$2$&&$1$&$0$\\\hline
$>1$&$1$&$1$&$0$\\\hline
$>2$&$>1$&$\lfloor\Delta/2\rfloor-1$&$1$\\\hline
\end{tabular}
\end{small}
\end{center}

\subsection{$f(x)$ has a $4$-tuple root. }\label{4tuple}Suppose $f(x)$ satisfies (E) of Lemma \ref{factorization}. 

\subsubsection{The three coefficients $a,b,c$ of $f(x)$ are divisible by $p$.} If $v_p(a)\ge 2$, $v_p(b)\ge 3$, and $v_p(c)\ge 4$, then the quartic polynomial $x^4+(a/p^2)x^2+(b/p^3)x+(c/p^4)$ generates the same quartic field. By repeating this procedure, we can assume that either $v_p(a)=1$, or $v_p(b)\le2$, or $v_p(c)\le 3$. Under this assumption, a direct application of Theorem \ref{main} yields a $p$-integral basis in most of the cases. The results are collected in Table \ref{T4tuple}. All rows of this table correspond to a situation where $f(x)$ is $p$-regular, except for the third row  (which reflects a regular situation only if $v_p(a^2-4c)=2$), and the sixth row. These irregular cases cannot be dealt with the iteration method of Lemma \ref{iteration}
because the unique side of the Newton polygon has a non-integer slope (equal to $-1/2$). They are handled in section \ref{secOrder2} by using Newton polygons of second order. 

\begin{center}
\begin{table}
\caption{\footnotesize $f(x)=x^4+ax^2+bx+c$ satisfies (E1) of Lemma \ref{factorization}. In  rows $7$ and $8$, we use a regular integer $s$ obtained by the iteration method of Lemma \ref{iteration}, and we denote $\alpha=\t^3+s\t^2+(a+s^2)\t+s^3+as+b$, $u_0=v_2(f(s))$, $u_1=v_2(f'(s))$.}\label{T4tuple}
\begin{small}
\begin{tabular}{|c|c|c|c|c|c|}\hline
$v_p(c)$&$v_p(b)$&$v_p(a)$&$p$&$\nu$&$p$-integral basis\\\hline
$1$&&&&&$1,\,\t,\,\t^2,\,\t^3$\\\hline
$>1$&$1$&&&&$1,\,\t,\,\t^2,\,\t^3/p$\\\hline
$2$&$>1$&$1$&$>2$&$\frac12\min\{v_p(b),v_p(a^2-4c)\}$&$1,\t,\dfrac{\t^2+(a/2)}{p^{\lfloor\nu\rfloor}},\dfrac{\t^3+(a/2)\t}{p^{\lfloor\nu+(1/2)\rfloor}}$\\\hline
$2$&$>1$&$1$&$2$&&$1,\,\t,\,\t^2/2,\,\t^3/2$\\\hline
$2$&$>1$&$>1$&$>2$&&$1,\,\t,\,\t^2/p,\,\t^3/p$\\\hline
$2$&$>1$&$>1$&$2$&Table \ref{T4tuple2irreg}&$1,\,\t,\,\dfrac{Q(\t)}{2^{\lfloor\nu\rfloor}},\,\dfrac{\t Q(\t)}{2^{\lfloor\nu+(1/2)\rfloor}}$\\\hline
$>2$&$>1$&$1$&$>2$&$\lfloor\Delta/2\rfloor-1$&$1,\,\t,\,\t^2/p,\,\alpha/p^\nu$\\\hline
$>2$&$>1$&$1$&$2$&$\lfloor\min\{\frac12(u_0+1),u_1\}\rfloor$&$1,\,\t,\,\t^2/2,\,\alpha/2^\nu$\\\hline
$>2$&$2$&$>1$&&&$1,\,\t,\,\t^2/p,\,\t^3/p^2$\\\hline
$3$&$>2$&$>1$&&&$1,\,\t,\,\t^2/p,\,\t^3/p^2$\\\hline
\end{tabular}
\end{small}
\end{table}
\end{center}

\begin{center}
\begin{table}
\caption{\footnotesize Expansion of the sixth row of Table \ref{T4tuple}.  }\label{T4tuple2irreg}
\begin{small}
\begin{tabular}{|c|c|c|c|c|c|}\hline
$v_2(a)$&$v_2(b)$&$v_2(2a+c-4)$&$v_2(2a+b)$&$Q(x)$&$\nu$\\\hline
&$2$&&&$x^2$&$5/4$\\\hline
&$3$&$3$&&$x^2+2$&$7/4$\\\hline
$2$&$3$&$\ge4$&&$x^2+2x+2$&$2$\\\hline
$\ge3$&$3$&$\ge4$&&$x^2+2$&$7/4$\\\hline
$2$&$\ge4$&$\ge4$&&$x^2+2$&$7/4$\\\hline
$2$&$\ge4$&$3$&&see below&see below
\\\hline
$\ge3$&$\ge4$&$3$&&$x^2+2$&$2$\\\hline
$\ge3$&$\ge4$&$4$&$>4$&$x^2+2x+2$&$9/4$\\\hline
$\ge3$&$\ge4$&$4$&$4$&$x^2+2x+2$&$5/2$\\\hline
$\ge3$&$\ge4$&$\ge5$&$4$&$x^2+2x+2$&$9/4$\\\hline
$\ge3$&$\ge4$&$\ge5$&$>4$&$x^2+2x+2$&$5/2$\\\hline
\end{tabular}

If $v_2(a)=2$, $v_2(b)\ge4$ and $v_2(2a+c-4)=3$, we denote $u=v_2(b),\,v=v_2(c-(a^2/4))$, $d=(c-(a^2/4))/2^v \md4$.
\begin{tabular}{|c|c|c|c|}\hline
$u,\,v$&$d$&$Q(x)$&$\nu$\\\hline
$u\le v$&&$x^2+\frac a2$&$\frac{2u+1}4$\\\hline
$u-1=v=2w$&$-1$&$x^2+\frac a2+2^w$&$w+\frac34$\\\hline
$u-1>v=2w$&$-1$&$x^2+\frac a2+2^w$&$w+1$\\\hline
$u-1=v=2w$&$1$&$x^2+2^wx+\frac a2+2^w$&$w+1$\\\hline
$u-1>v=2w$&$1$&$x^2+\frac a2+2^w$&$w+\frac34$\\\hline
$u-1=v=2w+1$&$\frac a4$&$x^2+2^wx+\frac a2$&$w+\frac54$\\\hline
$u-1>v=2w+1$&$\frac a4$&$x^2+2^wx+\frac a2$&$w+\frac32$\\\hline
$u-1=v=2w+1$&$-\frac a4$&$x^2+2^wx+\frac a2+2^{w+1}$&$w+\frac32$\\\hline
$u-1>v=2w+1$&$-\frac a4$&$x^2+2^wx+\frac a2$&$w+\frac54$\\\hline
\end{tabular}
\end{small}
\end{table}
\end{center}

\subsubsection{The case $p=2$, $a$, $b$ even and $c$ odd. }
We have $f(x)\equiv (x+1)^4\md2$. For any odd integer $m$, the $(x-m)$-development of $f(x)$ is:
$$
f(x)=(x-m)^4+4m(x-m)^3+A(x-m)^2+B(x-m)+C, 
$$with $A=6m^2+a,\ B=f'(m),\ C=f(m)$. The polynomial
$$
g(x):=f(x+m)=x^4+4mx^3+Ax^2+Bx+C,
$$
defines the same quartic field $K$, and since $A,B,C$ are all even, the computation of a $2$-integral basis is similar to the previous case. We take $\om:=\t-m$ as a root of $g(x)$, and we display the results in Table \ref{T4extra}, whose rows take into account all possible values of the triple $(v_2(A),v_2(B),v_2(C))$, except for three cases that may be discarded by a proper choice of $m$. Lemma \ref{avoid}, whose proof is left to the reader, shows how to avoid these three bad cases.

\begin{lemma}\label{avoid}
Let $m$, $m'$ be odd integers and let $f(x+m)=x^4+4mx^3+Ax^2+Bx+C$, $f(x+m')=x^4+4m'x^3+A'x^2+B'x+C'$.

If $v_2(A)>2$, $v_2(B)>3$ and $v_2(C)=4$, then for $m'=m+2$, we get $v_2(A')>2$, $v_2(B')>3$ and $v_2(C')>4$.

If $v_2(A)>4$, $v_2(B)=6$ and $v_2(C)=8$, then for $m'=m+4$, we get $v_2(A')=4$, $v_2(B')>6$,  $v_2(C')>8$.

If $v_2(A)=4$, $v_2(B)=6$ and $v_2(C)>8$, then for $m'=m+4$, we get $v_2(A')>4$, $v_2(B')>6$ and $v_2(C')>8$.
\end{lemma}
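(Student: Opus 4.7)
The plan is to derive explicit formulas expressing $(A',B',C')$ in terms of $(A,B,C,m,k)$ under the shift $m' = m + 2k$ with $k\in\{1,2\}$, and then verify each of the three assertions by direct $2$-adic bookkeeping, using only that $m$ is odd. The starting identity is $f(x+m') = f((x+2k)+m)$; substituting $y = x+2k$ into $f(y+m) = y^4 + 4my^3 + Ay^2 + By + C$ and collecting powers of $x$ yields
\begin{align*}
A' &= A + 24mk + 24k^2,\\
B' &= B + 4Ak + 48mk^2 + 32k^3,\\
C' &= C + 2Bk + 4Ak^2 + 32mk^3 + 16k^4,
\end{align*}
while the coefficient of $x^3$ is $4(m+2k) = 4m'$, a reassuring sanity check.

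For the first assertion I will set $k=1$. The summands whose $2$-adic valuations exactly hit the target thresholds are $48m+32=16(3m+2)$ (with $3m+2$ odd, so $v_2=4$) and $16+C=16(1+C/16)$, which jumps to $v_2\ge 5$ because $C/16$ is odd. Under the hypotheses $v_2(A)\ge 3$, $v_2(B)\ge 4$, $v_2(C)=4$, every other summand contributing to $A'$, $B'$, $C'$ has sufficiently large $v_2$ to be absorbed, giving $v_2(A')\ge 3$, $v_2(B')\ge 4$, $v_2(C')\ge 5$, as desired.

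For the second and third assertions I will set $k=2$. The crucial $2$-adic facts will be: $48(m+2)$ has $v_2=4$ exactly (since $m+2$ is odd), which pins $v_2(A')$ to $4$ when $v_2(A)>4$ but lifts it to at least $5$ when $v_2(A)=4$, by the odd-plus-odd mechanism $A+48(m+2)=16(A/16+3(m+2))$; the combination $B+192m=64(B/64+3m)$ has its bracket even and so has $v_2\ge 7$; and each of the sums $C+4B$ and $4B+16A$ equals $256$ times a sum of two odd units, gaining an extra factor of $2$ to reach $v_2\ge 9$. Together with routine absorption of the remaining terms, these observations dispatch both cases by essentially identical bookkeeping.

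The only real obstacle is organizational: in each of the expansions above several summands have $v_2$ equal to (rather than strictly above) the target threshold, so one must carefully identify the right pairwise groupings in which the leading bits cancel. There is no deeper conceptual content beyond that, and once the three transformation formulas are written down, the whole lemma reduces to a short parity check in each of the three cases.
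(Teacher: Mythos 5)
Your proposal is correct: the transformation formulas for $A'$, $B'$, $C'$ under $m'=m+2k$ check out, and the $2$-adic bookkeeping in each of the three cases (in particular the odd-plus-odd cancellations in $16+C$, $B+192m$, $C+4B$, $4B+16A$, and $A+48(m+2)$) establishes exactly the claimed valuations. The paper leaves this proof to the reader, and your direct expand-and-group argument is precisely the intended one.
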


Most of the rows of Table \ref{T4extra} correspond to a situation where $g(x)$ is $2$-regular, with some exceptions that we discuss now.

In rows $5$, $18$ and $24$, regularity is achieved by applying the iteration method of Lemma \ref{iteration}.
The cases of rows $4$, $16$ and $17$ are handled in section \ref{secOrder2} by using Newton polygons of second order. 

In rows $10$ and $11$, we consider the polynomial $h(x)=g(2x)/16=x^4+2x^3+A'x^2+B'x+C'$, where $A'=A/4$, $B'=B/8$, $C'=C/16$. This polynomial is another defining equation of our quartic field $K$. 

If $v_2(A)=2$, $v_2(B)>3$, $v_2(C)=4$ (row $10$), then $h(x)\equiv (x^2+x+1)^2\md2$, and the discussion is similar to that of section \ref{Secquadratic}. In the table below, we display a choice of $\phi(x)\in\Z[x]$ such that $\phi(x)\equiv x^2+x+1\md2$ and $h(x)$ is $\phi$-regular. We denote $u=v_2(B'+1-A')$, $v=v_2(C'-((A'-1)^2/4))$ in this table.\medskip

\begin{center}
\begin{small}
\begin{tabular}{|c|c|c|}\hline
$A'\md4$&$u,\,v$&$\phi(x)$\\\hline
$1$&&$x^2+x-1$\\\hline
$-1$&$\min\{u,v\}$ odd&$x^2+x+\frac {A'-1}2$\\\hline
$-1$&$u=v=2w$&$x^2+(1+2^w)x+\frac {A'-1}2$\\\hline
$-1$&$u=2w<v$&$x^2+(1+2^w)x+\frac {A'-1}2+2^w$\\\hline
$-1$&$u>v=2w$&$x^2+x+\frac {A'-1}2+2^w$\\\hline
\end{tabular}
\end{small}
\end{center}\medskip

We display the $2$-integral basis provided by 
Theorem \ref{main} in Table \ref{T4extrairreg1}.

If $v_2(A)=2$, $v_2(B)>3$, $v_2(C)>4$ (row $11$), then $h(x)\equiv x^2(x+1)^2\md2$, and the discussion is similar to that of section \ref{Secdubledouble}. 
We display the results in Table \ref{Tdoubledoublebis}. For any integer $s$ the first quotient of the $(x-s)$-development of $h(x)$, evaluated at the root $\tau:=\om/2$ of $h(x)$ is:
\begin{equation}\label{alphasbis}
\alpha_s:=\tau^3+(s+2)\tau^2+(s^2+2s+A')\tau+s^3+2s^2+A's+B'.
\end{equation}
If $s$ is regular, the ordinate of the point on $N_{x-s}^-(h)$ of abscissa one is:
\begin{equation}\label{nusbis}
\nu_s:=\left\lfloor\min\left\{\frac12v_2(h(s)),v_2(h'(s))\right\}\right\rfloor
\end{equation}

Finally, in row $25$, we consider the polynomial $h(x)=g(4x)/64=x^4+mx^3+A'x^2+B'x+C'$, where $A'=A/16$, $B'=B/64$, $C'=C/256$. This polynomial defines the same quartic field $K$, and it satisfies $h(x)\equiv x^3(x+1)\md2$; we take $\tau:=\om/4$ as a root of $h(x)$ in $K$. A $2$-integral basis is obtained by the computation of a regular lift $s$ of the triple root modulo $2$, by the iteration method of Lemma \ref{iteration}. Theorem \ref{main} yields:
\begin{multline}\label{last}
1,\,\tau,\,\dfrac{\tau^2+(2s+m)\tau+3s^2+2ms+A'}{2^{\nu_2}},\,\\
\dfrac{\tau^3+(s+m)\tau^2+(s^2+ms+A')\tau+s^3+ms^2+A's+B'}{2^{\nu_1}}, 
\end{multline}
where $\nu_2=\min\left\{\frac13v_2(h(s)),\frac12v_2(h'(s)),v_2(\frac12h''(s))\right\}$is the ordinate of the point on $N_{x-s}^-(h)$ of abscissa two, and $\nu_1=\min\left\{\frac12(v_2(h(s))+\nu_2),v_2(h'(s))\right\}$ that of the point of abscissa one.

\begin{center}
\begin{table}
\caption{\footnotesize $f(x)$ satisfies (E2) of Lemma \ref{factorization} and $g(x)=f(x+m)=x^4+4mx^3+Ax^2+Bx+C$. In the first twelve rows we take $m=1$; in the rest of the rows we consider other odd values of $m$ to avoid some bad cases (see Lemma \ref{avoid}). In rows $5$, $18$, $24$, we use a regular integer $s$ obtained by the iteration method of Lemma \ref{iteration}; we denote $\alpha=\om^3+(s+4m)\om^2+(A+4ms+s^2)\om+s^3+4ms^2+As+B$, and $\nu=\left\lfloor\min\{(v_2(g(s))+v_2(A))/2,v_2(g'(s))\}\right\rfloor$. }\label{T4extra}
\begin{small}
\begin{tabular}{|c|c|c|c|c|}\hline
$v_2(C)$&$v_2(B)$&$v_2(A)$&$2$-integral basis\\\hline
$1$&&&$1,\,\om,\,\om^2,\,\om^3$\\\hline
$>1$&$1$&&$1,\,\om,\,\om^2,\,\om^3/2$\\\hline
$2$&$>1$&$1$&$1,\,\om,\,\om^2/2,\,\om^3/2$\\\hline
$2$&$>1$&$>1$&Table \ref{T4extrairreg}\\\hline
$>2$&$>1$&$1$&$1,\,\om,\,\om^2/2,\,\alpha/2^\nu$\\\hline
$>2$&$2$&$>1$&$1,\,\om,\,\om^2/2,\,\om^3/4$\\\hline
$3$&$>2$&$>1$&$1,\,\om,\,\om^2/2,\,\om^3/4$\\\hline
$4$&$3$&$2$&$1,\,\om/2,\,\om^2/4,\,\om^3/8$\\\hline
$4$&$3$&$>2$&$1,\,\om/2,\,\om^2/4,\,\om^3/8$\\\hline
$4$&$>3$&$2$&Table \ref{T4extrairreg1}\\\hline
$>4$&$>3$&$2$&Table \ref{Tdoubledoublebis}\\\hline
$>4$&$3$&$\ge2$&$1,\,\om/2,\,\om^2/4,\,\om^3/8$\\\hline
$5$&$>3$&$>2$&$1,\,\om/2,\,\om^2/4,\,\om^3/8$\\\hline
$>5$&$4$&$>2$&$1,\,\om/2,\,\om^2/4,\,\om^3/16$\\\hline
$6$&$>4$&$3$&$1,\,\om/2,\,\om^2/8,\,\om^3/16$\\\hline
$6$&$5$&$\ge4$&$1,\,\om/2,\,(\om^2+8)/8,\,(\om^3+8\om)/32$ \\\hline
$6$&$>5$&$\ge4$&$1,\,\om/2,\,\om^2/8,\,\om^3/16$ \\\hline
$>6$&$>4$&$3$&$1,\,\om/2,\,\om^2/8,\,\alpha/2^\nu$\\\hline
$>6$&$5$&$\ge4$&$1,\,\om/2,\,\om^2/8,\,\om^3/32$\\\hline
$7$&$>5$&$\ge4$&$1,\,\om/2,\,\om^2/8,\,\om^3/32$\\\hline
$8$&$6$&$4$&$1,\,\om/4,\,\om^2/16,\,\om^3/64$\\\hline
$8$&$>6$&$\ge4$&$1,\,\om/4,\,\om^2/16,\,\om^3/64$\\\hline
$>8$&$6$&$>4$&$1,\,\om/4,\,\om^2/16,\,\om^3/64$\\\hline
$>8$&$>6$&$4$&$1,\,\om/4,\,\om^2/16,\,\alpha/2^\nu$\\\hline
$>8$&$>6$&$>4$&(\ref{last})\\\hline
\end{tabular}
\end{small}
\end{table}
\end{center}

\begin{center}
\begin{table}
\caption{\footnotesize Expansion of the fifth row of Table \ref{T4extra}.  The $2$-integral basis is $1,\,\om,\,Q(\om)/2^{\lfloor\nu\rfloor},\,Q(\om)/2^{\lfloor\nu+(1/2)\rfloor}$. }\label{T4extrairreg}
\begin{small}
\begin{tabular}{|c|c|c|c|c|}\hline
$v_2(A)$&$v_2(B+8)$&$v_2(2A+C+4)$&$Q(x)$&$\nu$\\\hline
&$2$&&$x^2$&$5/4$\\\hline
&$3$&$\ge4$&$x^2+2$&$7/4$\\\hline
$2$&$3$&$3$&$x^2+2x+2$&$2$\\\hline
$2$&$\ge4$&$3$&$x^2+2$&$7/4$\\\hline
$2$&$4$&$\ge4$&$x^2+2$&$9/4$\\\hline
$2$&$\ge5$&$\ge5$&$x^2-2$&$5/2$\\\hline
$2$&$\ge5$&$4$&$x^2+2$&$5/2$\\\hline
$\ge3$&$3$&$3$&$x^2+2$&$7/4$\\\hline
$\ge3$&$\ge4$&$3$&see below&see below
\\\hline
$\ge3$&$\ge4$&$\ge4$&$x^2+2$&$2$\\\hline
\end{tabular}

If $v_2(A)\ge3$, $v_2(B+8)\ge4$ and $v_2(2A+C+4)=3$, we denote $u=v_2(B+8-2A)$, $v=v_2(C-((A-4)^2/4))$, $d=(C-((A-4)^2/4))/2^v\md4$, $e=(B+8-2A)/2^u\md4$. 
\begin{tabular}{|c|c|c|c|c|}\hline
$u,\,v$&$d$&$e$&$Q(x)$&$\nu$\\\hline
$u<v,\,u=v=2w$&&&$x^2+2x-2+\frac A2$&$\frac{2u+1}4$\\\hline
$u=v=2w+1$&$1+\frac A4$&$1$&$x^2+(2+2^w)x-2+\frac A2$&$w+\frac54$\\\hline
$u=v=2w+1$&$1+\frac A4$&$-1$&$x^2+(2+2^w)x-2+\frac A2+2^{w+1}$&$w+\frac32$\\\hline
$u=v=2w+1$&$-1+\frac A4$&$-1$&$x^2+(2+2^w)x-2+\frac A2$&$w+\frac54$\\\hline
$u=v=2w+1$&$-1+\frac A4$&$1$&$x^2+(2+2^w)x-2+\frac A2$&$w+\frac32$\\\hline
$u-1=v=2w$&&&$x^2+2x-2+\frac A2+2^w$&$w+\frac34$\\\hline
$u-1>v=2w$&$-1$&&$x^2+2x-2+\frac A2+2^w$&$w+1$\\\hline
$u-1>v=2w$&$1$&&$x^2+(2+2^w)x-2+\frac A2+2^w$&$w+1$\\\hline
$u>v=2w+1$&&&$x^2+2x-2+\frac A2$&$w+\frac34$\\\hline
\end{tabular}
\end{small}
\end{table}
\end{center}

\begin{center}
\begin{table}
\caption{\footnotesize Expansion of the tenth row of Table \ref{T4extra}. We take $h(x)=g(2x)/16=x^4+2x^3+A'x^2+B'x+C'$, with $A',C'$ odd, $B'$ even. We denote $u=v_2(B'+1-A')$, $v=v_2(C'-((A'-1)^2/4))$, $r=\min\{u,v\}$, $e=(B'+1-A')/2^u\md4$, $d=(C'-((A'-1)^2/4))/2^v\md4$, $k=A'+2^u\md8$. The $2$-integral basis is $1,\,\tau,\,Q(\tau)/2^\nu,\,\tau Q(\tau)/2^\nu$, where $\tau=\om/2$ is a root of $h(x)$. }\label{T4extrairreg1}
\begin{small}

If $A'\equiv1\md4$

\begin{tabular}{|c|c|c|}\hline
&$Q(x)$&$\nu$\\\hline
$C'\equiv1\md4$ or $v_2(B')=1$&$x^2$&$0$\\\hline
otherwise&$x^2+x+1$&$1$\\\hline
\end{tabular}\bigskip

If $A'\equiv3\md4$
\begin{tabular}{|c|c|c|c|}\hline
&&$Q(x)$&$\nu$\\\hline
$r$\mbox{ odd}&&$x^2+x+\frac {A'-1}2$&$\left\lfloor\frac r2\right\rfloor$\\\hline
$u>v=2w$&$u=2w+1$ or $d=1$&$x^2+x+\frac {A'-1}2$&$w$\\\hline
$u>v=2w$&otherwise&$x^2+x+\frac {A'-1}2+2^w$&$w+1$\\\hline
$u=v=2w$&\as{1}$\begin{array}{c}d=\frac{1-A'}2,\mbox{ or }(w=1,e=1),\\\mbox{ or } (w>1,e=-1)\end{array}$&$x^2+x+\frac {A'-1}2$&$w$\\\hline
$u=v=2w$&otherwise&$x^2+(1+2^w)x+\frac {A'-1}2$&$w+1$\\\hline
$u=2w<v$&\as{1}$\begin{array}{c}e=1+2^w,\mbox{ or }\\(v=2w+1,k=3),\\\mbox{ or } (v>2w+1,k=7)\end{array}$&$x^2+x+\frac {A'-1}2$&$w$\\\hline
$u=2w<v$&otherwise&$x^2+(1+2^w)x+\frac {A'-1}2+2^w$&$w+1$\\\hline
\end{tabular}
\end{small}
\end{table}
\end{center}

\begin{center}
\begin{table}
\caption{\footnotesize Expansion of the eleventh row of Table \ref{T4extra}. We take $h(x)=g(2x)/16=x^4+2x^3+A'x^2+B'x+C'$, with $A'$ odd, $B',\,C'$ even. We denote $\tau=\om/2$, where $\om$ is a root of $g(x)$. The integers $s,t$ are regular lifts of the two double roots of $h(x)$ modulo $2$, obtained by the iteration method of Lemma \ref{iteration}, and their notation is chosen so that $\nu_t\le \nu_s$.  The data $\nu_s$, $\alpha_s$ are defined in (\ref{alphasbis}), (\ref{nusbis}), and $\beta_{s,t}=\tau^2+(s+t+2)\tau+s^2+st+t^2+2(s+t)+A'$.}\label{Tdoubledoublebis}
\begin{small}
\begin{tabular}{|c|c|c|c|}\hline
$v_2(C')$& $v_2(A'+B'+C'+3)$&$A',\,t,\,s$ &$p$-integral basis\\\hline
$\quad 1$&$\quad 1$&&$1,\,\tau,\,\tau ^2,\,\tau^3$\\\hline
$\quad 1$&$>1$&$t=0$, $s$ odd&$1,\,\tau,\,\tau^2,\,\alpha_s/2^{\nu_s}$\\\hline
$>1$&$1$&$t=1$, $s$ even&$1,\,\tau,\,\tau^2,\,\alpha_s/2^{\nu_s}$\\\hline
$>1$&$>1$&\as{1}$\begin{array}{c}
A'\equiv3\md4\\t=0,\, s\mbox{ odd}               \end{array}$&$1,\,\tau,\,\dfrac{\tau^2+\tau}2,\, \alpha_s/2^{\nu_s}$\\\hline
$>1$&$>1$&$A'\equiv1\md4$&$1,\,\tau,\,\beta_{s,t}/2^{\nu_t},\, \alpha_s/2^{\nu_s}$
\\\hline
\end{tabular}
\end{small}
\end{table}
\end{center}

\section{Newton polygons of second order}\label{secOrder2}
The theory of Newton polygons of higher order was developed in \cite{m} (and revised in \cite{GMN}) as a tool to factorize separable polynomials in $\Z_p[x]$. It conjecturally yields a fast algorithm to compute integral basis in number fields \cite{GMN2}. In this section we shall use second order polygons directly addressed to cover the few cases of section \ref{secComputation} where a quartic polynomial is not $p$-regular
and the iteration method of Lemma \ref{iteration} cannot be applied because the irregular side of the Newton polygon has non-integer slope. 
In these cases we can find a $p$-integral basis by generalizing Theorem \ref{main} to certain Newton polygons of second order. 

\subsection{Second order $p$-integral bases} Let $F(x)=x^4+a_3x^3+a_2x^2+a_1x+a_0\in\Z[x]$ be an irreducible polynomial; let $\t\in\qb$ be a root of $F(x)$, and $K$ the quartic field generated by $\t$. Let $p$ be a prime number and denote
$$
u_i=v_p(a_i), \quad  \sigma_i=a_i/p^{u_i}, \quad i=0,1,2,3.
$$
We assume throughout this section that $u_0=2,\, u_1>1, \, u_2\ge1, \, u_3\ge1$,  and $R(y):=y^2+\overline{\sigma}_2y+\overline{\sigma}_0\in\F_p[y]$ is inseparable. 
Hence, $N_x(F)$ has only one side, with length four, degree two and slope $-1/2$ (see Figure 7).

\begin{center}
\setlength{\unitlength}{5.mm}
\begin{picture}(5,4)
\put(-.15,1.85){$\bullet$}\put(1.85,.85){$\bullet$}\put(1.85,1.85){$\bullet$}
\put(3.85,-.15){$\bullet$}\put(2,2){\vector(0,1){1}}
\put(-1,0){\line(1,0){5}}
\put(0,-.6){\line(0,1){4}}
\put(4,0){\line(-2,1){4}}\put(4.02,0){\line(-2,1){4}}
\put(-.4,-.6){\begin{footnotesize}$0$\end{footnotesize}}
\put(.9,-.6){\begin{footnotesize}$1$\end{footnotesize}}
\put(1.9,-.6){\begin{footnotesize}$2$\end{footnotesize}}
\put(2.9,-.6){\begin{footnotesize}$3$\end{footnotesize}}
\put(3.9,-.6){\begin{footnotesize}$4$\end{footnotesize}}
\put(1,-.1){\line(0,1){.2}}\put(2,-.1){\line(0,1){.2}}
\put(3,-.1){\line(0,1){.2}}\put(-.1,1){\line(1,0){.2}}
\put(-.6,1.9){\begin{footnotesize}$2$\end{footnotesize}}
\put(-.6,.9){\begin{footnotesize}$1$\end{footnotesize}}
\put(2.6,.9){\begin{footnotesize}$(p>2)$\end{footnotesize}}
\put(2.6,1.9){\begin{footnotesize}$(p=2)$\end{footnotesize}}
\end{picture}
\end{center}\medskip
\begin {center}
Figure 7
\end {center}\be

Choose a $p$-local integer $y\in \zp$ whose reduction modulo $p$ is the double root of $R(y)$; for instance, take $y=1$ if $p=2$. Choose an arbitrary $z\in\zp$ and consider the monic polynomial $\phi(x)=x^2+zpx-yp\in\Z_{(p)}[x]$.
Let $\vp$ be the \emph{$p$-adic valuation of second order} of $\Q_p(x)$ defined in \cite[Sec.2.2]{GMN}. We recall the following properties of $\vp$:

\begin{enumerate}
 \item $\vp(m)=2v_p(m)$, for all $m\in\Z_p$,
\item $\vp(x)=1$, $\vp(\phi(x))=2$,
\item $\vp(mx+n)=\min\{\vp(mx),\vp(n)\}$, for all $m,n\in\Z_p$.
\end{enumerate}

Let $F(x)=\phi(x)^2+a_1(x)\phi(x)+a_0(x)$ be the $\phi$-adic development of $F(x)$. The \emph{$\phi$-Newton polygon of the second order}, $\n(F)$, 
is by definition the lower convex envelope of the set of points $(i,\vp\left(a_i(x)\phi(x)^i\right))$, $i=0,1,2$, of the Euclidean plane. The possible shapes of this polygon are displayed in Figure 8.

\begin{center}
\setlength{\unitlength}{5.mm}
\begin{picture}(18,6)
\put(-.15,3.85){$\bullet$}\put(3.85,1.85){$\bullet$}
\put(1.85,2.85){$\bullet$}\put(1.85,3.85){$\bullet$}
\put(2,4){\vector(0,1){1}}
\put(-1,0){\line(1,0){6}}\put(0,2){\line(0,1){4}}
\multiput(0,-.2)(0,.25){9}{\vrule height2pt}
\multiput(4,-.1)(0,.25){9}{\vrule height2pt}
\put(4,2){\line(-2,1){4}}\put(4.02,2){\line(-2,1){4}}
\put(-.4,-.6){\begin{footnotesize}$0$\end{footnotesize}}
\put(1.9,-.6){\begin{footnotesize}$1$\end{footnotesize}}
\put(3.9,-.6){\begin{footnotesize}$2$\end{footnotesize}}
\put(2,-.1){\line(0,1){.2}}\put(4,-.1){\line(0,1){.2}}
\put(-.1,2){\line(1,0){.2}}
\put(-3.2,3.9){\begin{footnotesize}$\vp(a_0(x))$\end{footnotesize}}
\put(-.6,1.9){\begin{footnotesize}$4$\end{footnotesize}}
\put(-1.4,-1.4){\begin{footnotesize}$\vp(a_0(x))\le 2\vp(a_1(x))$\end{footnotesize}}

\put(12.85,4.85){$\bullet$}\put(16.85,1.85){$\bullet$}\put(14.85,2.85){$\bullet$}
\put(12,0){\line(1,0){6}}
\put(13,2){\line(0,1){4}}
\multiput(13,-.2)(0,.25){9}{\vrule height2pt}
\multiput(17,-.1)(0,.25){9}{\vrule height2pt}
\multiput(13,3)(.25,0){9}{\hbox to 2pt{\hrulefill }}
\put(17,2){\line(-2,1){2}}\put(17.02,2){\line(-2,1){2}}
\put(15,3){\line(-1,1){2}}\put(15.02,3){\line(-1,1){2}}
\put(12.6,-.6){\begin{footnotesize}$0$\end{footnotesize}}
\put(14.9,-.6){\begin{footnotesize}$1$\end{footnotesize}}
\put(16.9,-.6){\begin{footnotesize}$2$\end{footnotesize}}
\put(15,-.1){\line(0,1){.2}}
\put(12.9,2.05){\line(1,0){.2}}\put(12.9,3.05){\line(1,0){.2}}
\put(9.7,4.9){\begin{footnotesize}$\vp(a_0(x))$\end{footnotesize}}
\put(8.7,2.9){\begin{footnotesize}$\vp(a_1(x))+2$\end{footnotesize}}
\put(12.4,1.9){\begin{footnotesize}$4$\end{footnotesize}}
\put(11.7,-1.4){\begin{footnotesize}$\vp(a_0(x))> 2\vp(a_1(x))$\end{footnotesize}}
\end{picture}
\end{center}\be
\begin {center}
Figure 8
\end {center}\be

We define the \emph{second order index} $\id(F)$ to be the number of points of integer coordinates below or on $\n(F)$, strictly above the horizontal line $x=4$
and strictly beyond the vertical axis. In other words, if $Y\in\Q$ is the ordinate of the point on $\n(F)$ of abscissa one, we have $\id(F)=\lfloor Y-4\rfloor$. 
  
There is a natural residual polynomial of second order attached to each side, whose degree coincides with the degree of the side \cite[Sec.2.5]{GMN}. 
Only the points lying on the side determine a nonzero coefficient of this second order residual polynomial, and this is the only property we need in what follows.   
We define $F(x)$ to be \emph{$\phi$-regular in second order} when all  second order residual polynomials are separable.
In the regular case, the $p$-valuation of the index of $F(x)$ can be computed in terms of the higher order indices \cite[Thm.4.18]{GMN}. In our situation this theorem states that:
$$
\ind_p(F)=\ind_x(F)+\id(F)=2+\id(F)=\lfloor Y\rfloor -2.
$$

\begin{theorem}\label{main2}
With the above notations, let $Q(x)=\phi(x)+a_1(x)\in\Z_{(p)}[x]$ be the first quotient of the $\phi$-adic development of $F(x)$. If $F(x)$ is $\phi$-regular in second order, then the following elements are a $p$-integral basis of the quartic field $K$:
\begin{equation}\label{basisnu}
1,\ \t,\ \dfrac{Q(\t)}{p^{\lfloor \nu\rfloor}}, 
\ \dfrac{\t Q(\t)}{p^{\lfloor \nu+(1/2)\rfloor}},\quad \nu:=\dfrac Y2-1.
\end{equation}
\end{theorem}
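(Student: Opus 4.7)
The plan is to imitate the proof of Theorem \ref{main}, with the second-order polygon $\n(F)$ playing the role that $\nph(f)$ plays there. Consider the $\zp$-lattice
$$
L \;:=\; \left\langle 1,\ \t,\ \dfrac{Q(\t)}{p^{\lfloor \nu\rfloor}},\ \dfrac{\t Q(\t)}{p^{\lfloor \nu+(1/2)\rfloor}}\right\rangle_{\zp}.
$$
I aim to establish three facts: (i) $L\subseteq \ZK\otimes_\Z\zp$; (ii) $\{1,\t,Q(\t),\t Q(\t)\}$ is a $\zp$-basis of $\zp[\t]$; (iii) $[L:\zp[\t]]=p^{\ind_p(F)}$. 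Combined with $[\ZK\otimes\zp:\zp[\t]]=p^{\ind_p(F)}$, these force $L=\ZK\otimes\zp$, which is exactly the statement of the theorem.

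The crucial step is (i). Fix $\p\in\pp$ and set $\alpha:=\t^\p\in\qb_p$. Since $N_x(F)$ is one-sided of slope $-1/2$, every such $\alpha$ satisfies $v_p(\alpha)=1/2$, so $v_\p(\t)=e(\p/p)/2$; hence it suffices to show $v_p(Q(\alpha))\ge \nu$, as the bound for $\t Q(\t)$ then follows automatically. From the identity $F = Q\phi + a_0$ I read
$$
v_p(Q(\alpha)) \;=\; v_p(a_0(\alpha)) \;-\; v_p(\phi(\alpha)),
$$
so I must control both summands on the right. This is precisely what the second-order polygon provides under the $\phi$-regularity hypothesis: the second-order analogue of Ore's theorem (see \cite[\S3]{GMN}) factors $F$ in $\zp[x]$ according to the sides and residual polynomials of $\n(F)$, determining $v_p(\phi(\alpha))$ from the slope of the associated side, while $v_p(a_0(\alpha))=\vp(a_0(x))/2$ is forced by $v_p(\alpha)=1/2$ together with $\deg a_0<2$ (the two terms of $a_0(\alpha)$ have valuations differing by a half-integer, hence can never coincide). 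A split on the shape of $\n(F)$ from Figure~8, namely (A) one side when $\vp(a_0)\le 2\vp(a_1)$ and (B) two sides otherwise, then yields $v_p(Q(\alpha))=\vp(a_0)/4=\nu$ in case~(A) and $v_p(Q(\alpha))\ge \vp(a_1)/2=\nu$ in case~(B).

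For (ii) and (iii), observe that $Q(x)=\phi(x)+a_1(x)$ is monic of degree $2$ in $\zp[x]$; hence the change-of-basis matrix from $\{1,x,x^2,x^3\}$ to $\{1,x,Q(x),xQ(x)\}$ inside $\zp[x]/(F(x))$ is upper triangular with $1$'s on the diagonal, so $\{1,\t,Q(\t),\t Q(\t)\}$ is a $\zp$-basis of $\zp[\t]$. Consequently, $[L:\zp[\t]] = p^{\lfloor\nu\rfloor+\lfloor\nu+1/2\rfloor}$. Since the vertices of $\n(F)$ have integer coordinates at the three relevant abscissas $0,1,2$, one has $Y\in\tfrac12\Z$ and therefore $\nu\in\tfrac14\Z$; a routine four-case check on $\nu\bmod 1\in\{0,1/4,1/2,3/4\}$ gives $\lfloor\nu\rfloor+\lfloor\nu+1/2\rfloor=\lfloor 2\nu\rfloor=\lfloor Y\rfloor-2$. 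Combined with the second-order index formula $\ind_p(F)=\ind_x(F)+\id(F)=2+\lfloor Y-4\rfloor=\lfloor Y\rfloor-2$ (Theorem~4.18 of \cite{GMN}), this establishes (iii), and hence $L=\ZK\otimes\zp$.

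The main obstacle is the valuation bound $v_p(Q(\alpha))\ge\nu$ in step (i): because the first-order residual polynomial of $F$ is inseparable, $v_p(\phi(\alpha))$ is genuinely invisible to first-order data alone, and one really needs the second-order machinery of \cite{m,GMN} together with the regularity hypothesis in order to compute it. Carrying out step (i) rigorously amounts to quoting (or re-deriving in the narrow generality needed here) the second-order analogue of the Ore--Hensel factorisation of $F$ over $\zp$.
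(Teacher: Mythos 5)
Your proposal follows essentially the same route as the paper's proof: first establish integrality of the four elements (the key point being $v_\p(Q(\t))\ge e(\p/p)\,\nu$ together with $v_\p(\t)\ge e(\p/p)/2$), then match the index $\lfloor\nu\rfloor+\lfloor\nu+\tfrac12\rfloor=\lfloor Y\rfloor-2$ of $\Z_{(p)}[\t]$ inside the candidate lattice against $\ind_p(F)$ via the second-order index formula. The only difference is that where the paper delegates the valuation bound to the general result of Gu\`ardia--Montes--Nart (Prop.\ 3.6 of the cited preprint), you verify it directly from the quadratic $z^2+a_1(\alpha)z+a_0(\alpha)=0$ satisfied by $z=\phi(\alpha)$ at each root $\alpha$ of $F$ --- a correct and more self-contained check in this quartic setting.
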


\begin{proof}
Arguing as in Proposition \ref{denominator}, one checks easily that 
$v_{\p}(Q(\t))\ge e(\p/p)\nu$, 
for all prime ideals $\p$ of $K$ lying above $p$. This is proved for Newton polygons of arbitrary order in \cite[Prop.3.6]{GMN2}. 
On the other hand, along the proof of Proposition \ref{denominator} we saw that $v_{\p}(\t)\ge e(\p/p)/2$, for all prime ideals $\p$ of $K$ lying above $p$. Therefore, the four elements of (\ref{basisnu})
are integral, and they
 generate a $\Z_{(p)}$-module $M$ containing $\Z_{(p)}[\t]$, with
$$
v_p\left(\left(M\colon \Z_{(p)}[\t]\right)\right)=\left\lfloor \dfrac Y2\right\rfloor+\left\lfloor \dfrac{Y+1}2\right\rfloor-2=\lfloor Y\rfloor-2=\ind_p(f).
$$
\end{proof}

In the rest of the section we apply Theorem \ref{main2} to compute a $p$-integral basis in four 
subcases of \S \ref{4tuple}. Actually, in the discussion of some of these cases we just show how to choose a polynomial $\phi(x)$ such that $F(x)$ is $\phi$-regular in second order. 

\subsection{Case $p>2$, $F(x)=f(x)=x^4+ax^2+bx+c$, $v_p(a)=1$, $v_p(b)>1$, $v_p(c)=2$. }This case corresponds to row $3$ of Table \ref{T4tuple}.
The polynomial $f(x)$ is $x$-regular if and only if $v_p(a^2-4c)=2$, in which case, $1,\,\t,\,\t^2/p,\,\t^3/p$ is a $p$-integral basis. In the irregular case,  $v_p(a^2-4c)\ge 3$, we choose $\phi(x)=x^2+(a/2)$, leading to a $\phi$-adic development:  $f(x)=\phi(x)^2+bx+c-(a^2/4)$. Denote $u=\vp(bx+c-(a^2/4))=\min\{2v_p(b)+1,2v_p(c-(a^2/4))\}$. 
The $\phi$-Newton polygon of second order has only one side, with end points $(0,u)$, $(2,4)$. This side has degree one if $v_p(b)<v_p(a^2-4c)$ (because then $u$ is odd), and  degree two otherwise; but in the latter case the residual polynomial is a separable polynomial of the form $\sigma y^2+\tau$ for some nonzero $\sigma,\tau\in\F_p$. Thus, $f(x)$ is always $\phi$-regular in second order. We have $Y=(u+4)/2$, and $\nu=u/4$; however, in Table \ref{T4tuple} we displayed $\nu=\frac 12\min\{v_p(b),v_p(a^2-4c)\}$ because it yields too the right values of $\lfloor \nu\rfloor$, $\lfloor \nu+(1/2)\rfloor$. 

\subsection{Case $p=2$, $F(x)=f(x)=x^4+ax^2+bx+c$, $v_2(a)>1$, $v_2(b)>1$, $v_2(c)=2$. }This case corresponds to row $6$ of Table \ref{T4tuple}.
For the different choices of $\phi(x)$ specified in Table \ref{ChoiceofPhi} the polynomial $f(x)$ is $\phi$-regular in second order.
We discuss in some detail only the cases covered by $\phi(x)=x^2-2$. The $\phi$-adic development of $f(x)$, and the first quotient of this development are:
$$
f(x)=\phi(x)^2+(a+4)\phi(x)+bx+c+2a+4,\quad Q(x)=\phi(x)+a+4=x^2+a+2.
$$ 
If $v_2(a)=2$, $v_2(b)\ge4$ and $v_2(c+2a+4)\ge4$, the $\phi$-Newton polygon of second order of $f(x)$ can have different shapes; in Figure 9 we display $\n(f)$ in all other cases.

\begin{center}
\setlength{\unitlength}{5.mm}
\begin{picture}(21,6.6)
\put(-.15,1.85){$\bullet$}
\put(.85,2.85){$\bullet$}\put(1.85,.85){$\bullet$}
\put(-1,0){\line(1,0){4}}
\put(0,1){\line(0,1){5}}
\put(1,3){\vector(0,1){1}}
\multiput(0,-.1)(0,.25){5}{\vrule height2pt}
\multiput(2,-.1)(0,.25){5}{\vrule height2pt}
\put(2,1){\line(-2,1){2}}\put(2.02,1){\line(-2,1){2}}
\put(-.4,-.6){\begin{footnotesize}$0$\end{footnotesize}}
\put(.9,-.6){\begin{footnotesize}$1$\end{footnotesize}}
\put(1.9,-.6){\begin{footnotesize}$2$\end{footnotesize}}
\put(1,-.1){\line(0,1){.2}}\put(2,-.1){\line(0,1){.2}}
\put(-.1,1.1){\line(1,0){.2}}\put(-.1,2.1){\line(1,0){.2}}
\put(-.1,3.1){\line(1,0){.2}}\put(-.1,4.1){\line(1,0){.2}}
\put(-.6,.9){\begin{footnotesize}$4$\end{footnotesize}}
\put(-.6,1.9){\begin{footnotesize}$5$\end{footnotesize}}
\put(-.6,2.9){\begin{footnotesize}$6$\end{footnotesize}}
\put(-.6,3.9){\begin{footnotesize}$7$\end{footnotesize}}
\put(-.1,-1.5){\begin{footnotesize}$v_2(b)=2$\end{footnotesize}}

\put(5.85,3.85){$\bullet$}
\put(6.85,2.85){$\bullet$}\put(7.85,.85){$\bullet$}
\put(5,0){\line(1,0){4}}
\put(6,1){\line(0,1){5}}
\put(7,3){\vector(0,1){1}}
\multiput(6,-.1)(0,.25){5}{\vrule height2pt}
\multiput(8,-.1)(0,.25){5}{\vrule height2pt}
\put(8,1){\line(-2,3){2}}\put(8.02,1){\line(-2,3){2}}
\put(5.6,-.6){\begin{footnotesize}$0$\end{footnotesize}}
\put(6.9,-.6){\begin{footnotesize}$1$\end{footnotesize}}
\put(7.9,-.6){\begin{footnotesize}$2$\end{footnotesize}}
\put(7,-.1){\line(0,1){.2}}\put(8,-.1){\line(0,1){.2}}
\put(5.9,1.1){\line(1,0){.2}}\put(5.9,2.1){\line(1,0){.2}}
\put(5.9,3.1){\line(1,0){.2}}\put(5.9,4.1){\line(1,0){.2}}
\put(5.4,.9){\begin{footnotesize}$4$\end{footnotesize}}
\put(5.4,1.9){\begin{footnotesize}$5$\end{footnotesize}}
\put(5.4,2.9){\begin{footnotesize}$6$\end{footnotesize}}
\put(5.4,3.9){\begin{footnotesize}$7$\end{footnotesize}}
\put(4.5,-1.5){\begin{footnotesize}$\begin{array}{c}
v_2(b)=3\\v_2(c+2a+4)\ge4\end{array}$\end{footnotesize}}

\put(11.85,4.85){$\bullet$}\put(11.85,5.85){$\bullet$}
\put(12.85,2.85){$\bullet$}\put(13.85,.85){$\bullet$}
\put(11,0){\line(1,0){4}}
\put(12,1){\line(0,1){5}}
\multiput(12,-.1)(0,.25){5}{\vrule height2pt}
\multiput(14,-.1)(0,.25){5}{\vrule height2pt}
\put(14,1){\line(-1,2){2}}\put(14.02,1){\line(-1,2){2}}
\put(13,3){\line(-1,3){1}}\put(13.02,3){\line(-1,3){1}}
\put(11.6,-.6){\begin{footnotesize}$0$\end{footnotesize}}
\put(12.9,-.6){\begin{footnotesize}$1$\end{footnotesize}}
\put(13.9,-.6){\begin{footnotesize}$2$\end{footnotesize}}
\put(13,-.1){\line(0,1){.2}}\put(14,-.1){\line(0,1){.2}}
\put(11.9,1.1){\line(1,0){.2}}\put(11.9,2.1){\line(1,0){.2}}
\put(11.9,3.1){\line(1,0){.2}}\put(11.9,4.1){\line(1,0){.2}}
\put(11.4,.9){\begin{footnotesize}$4$\end{footnotesize}}
\put(11.4,1.9){\begin{footnotesize}$5$\end{footnotesize}}
\put(11.4,2.9){\begin{footnotesize}$6$\end{footnotesize}}
\put(11.4,3.9){\begin{footnotesize}$7$\end{footnotesize}}
\put(11.4,4.9){\begin{footnotesize}$8$\end{footnotesize}}
\put(10.5,-1.5){\begin{footnotesize}$\begin{array}{c}
v_2(a)\ge3,\,v_2(b)\ge 4\\v_2(c+2a+4)\ge4\end{array}$\end{footnotesize}}

\put(17.85,2.85){$\bullet$}\put(18.85,2.85){$\bullet$}
\put(19.85,.85){$\bullet$}
\put(17,0){\line(1,0){4}}\put(18,1){\line(0,1){5}}
\put(19,3){\vector(0,1){1}}
\multiput(18,-.1)(0,.25){5}{\vrule height2pt}
\multiput(20,-.1)(0,.25){5}{\vrule height2pt}
\put(20,1){\line(-1,1){2}}\put(20.02,1){\line(-1,1){2}}
\put(17.6,-.6){\begin{footnotesize}$0$\end{footnotesize}}
\put(18.9,-.6){\begin{footnotesize}$1$\end{footnotesize}}
\put(19.9,-.6){\begin{footnotesize}$2$\end{footnotesize}}
\put(19,-.1){\line(0,1){.2}}\put(20,-.1){\line(0,1){.2}}
\put(17.9,1.1){\line(1,0){.2}}\put(17.9,2.1){\line(1,0){.2}}
\put(17.9,3.1){\line(1,0){.2}}\put(17.9,4.1){\line(1,0){.2}}
\put(17.4,.9){\begin{footnotesize}$4$\end{footnotesize}}
\put(17.4,1.9){\begin{footnotesize}$5$\end{footnotesize}}
\put(17.4,2.9){\begin{footnotesize}$6$\end{footnotesize}}
\put(17.4,3.9){\begin{footnotesize}$7$\end{footnotesize}}
\put(16.5,-1.5){\begin{footnotesize}$\begin{array}{c}
v_2(b)\ge3\\v_2(c+2a+4)=3\end{array}$\end{footnotesize}}
\end{picture}
\end{center}\be\be
\begin {center}
Figure 9
\end {center}

In the first three cases of Figure 9, $f(x)$ is $(x^2-2)$-regular and we obtain the $2$-integral basis (\ref{basisnu}), with the following values of $Y$ and $\nu$. We also indicate a polynomial $Q(x)$, simpler than $x^2+a+2$,  and for which (\ref{basisnu}) is still a $2$-integral basis. \be

\begin{center}
\begin{small}
\begin{tabular}{|c|c|c|c|}\hline
$a,b,c$&$Y$&$\nu$&$Q(x)$\\\hline
$v_2(b)=2$&$9/2$&$5/4$&$x^2$\\\hline
$v_2(b)=3$, $v_2(c+2a+4)\ge4$&$11/2$&$7/4$&$x^2+2$\\\hline
$v_2(a)\ge3,\,v_2(b)\ge 4,\,v_2(c+2a+4)\ge4$&$6$&$2$&$x^2+2$\\\hline
\end{tabular}
\end{small}
\end{center}\be

In the fourth case of Figure 9, the second order residual polynomial is $(y+1)^2$ and $f(x)$ is $(x^2-2)$-irregular; hence, we must look for other choices for $\phi(x)$.

\subsection{Case $p=2$, $F(x)=g(x)=x^4+4x^3+Ax^2+Bx+C$, $v_2(A)>1$, $v_2(B)>1$, $v_2(C)=2$. }
This case corresponds to row $4$ of Table \ref{T4extra}.
For the different choices of $\phi(x)$ specified in Table \ref{ChoiceofPhiextra}, the polynomial $g(x)$ is $\phi$-regular in second order. The discussion is completely analogous to that of the previous case.

\subsection{Case $p=2$, $F(x)=g(2x)/16=x^4+2mx^3+A'x^2+B'x+C'$, $v_2(A')>1$, $v_2(B')>1$, $v_2(C')=2$. }
This case corresponds to rows $16,17$ of Table \ref{T4extra}, with $A'=A/4$, $B'=B/8$, $C'=C/16$.
The polynomial $F(x)$ is always $\phi$-regular in second order for $\phi(x)=x^2-2$. The $\phi$-development is:
$$F(x)=\phi(x)^2+(2mx+4+A')\phi(x)+(4m+B')x+2A'+C'+4,$$ and the first quotient is $Q(x)=x^2+2mx+2+A'$. We have $\v2((2mx+4+A')\phi(x))=5$ in all cases,
and $\v2((4m+B')x+2A'+C'+4)=5$ or $\ge 6$ according to $v_2(B')\ge 3$ or $v_2(B')=2$. 
Therefore, if $\tau$ is a root of $F(x)$, Theorem \ref{main2} yields a $2$-integral basis 
\begin{equation}\label{basistau}
1,\,\tau,\,Q(\tau)/2^{\lfloor\nu\rfloor},\,Q(\tau)/2^{\lfloor\nu+(1/2)\rfloor},
\end{equation} with the values of $Y$ and $\nu$ indicated in the table below. Actually, we also indicate a polynomial $Q(x)$, simpler than $x^2+2mx+2+A'$,  and for which (\ref{basistau}) is still a $2$-integral basis. \medskip

\begin{center}
\begin{small}
\begin{tabular}{|c|c|c|c|}\hline
$v_2(B')$&$Y$&$\nu$&$Q(x)$\\\hline
$\ge3$&$9/2$&$5/4$&$x^2$\\\hline
$2$&$5$&$3/2$&$x^2+2$\\\hline
\end{tabular}
\end{small}
\end{center}\medskip

In Table \ref{T4extra}, we express the basis (\ref{basistau}) in terms of the root
$\om=2\tau$ of the polynomial $g(x)$.

\begin{center}
\begin{table}
\caption{\footnotesize Choice of $\phi(x)$ for which $f(x)=x^4+ax^2+bx+c$ is $\phi$-regular in second order. Here  $v_2(a)>1$, $v_2(b)>1$, $v_2(c)=2$, and  $u=v_2(b)$, $v=v_2(c-(a^2/4))$, $d=(c-(a^2/4))/2^v\md4$.  }\label{ChoiceofPhi}
\begin{small}
\begin{tabular}{|c|c|c|c|c|c|}\hline
$v_2(a)$&$v_2(b)$&$v_2(2a+c-4)$&$u,v$&$d$&$\phi(x)$\\\hline
&$2$&&&&$x^2-2$\\\hline
&$3$&$3$&&&$x^2-2$\\\hline
&$3$&$\ge4$&&&$x^2-2x-2$\\\hline
$2$&$\ge4$&$\ge4$&&&$x^2-2x-2$\\\hline
$2$&$\ge4$&$3$&$u<v$&&$x^2+\frac a2$\\\hline
$2$&$\ge4$&$3$&$u=v=2w$&&$x^2+\frac a2+2^w$\\\hline
$2$&$\ge4$&$3$&$u=v=2w+1$&&$x^2+2^wx+\frac a2$\\\hline
$2$&$\ge4$&$3$&$u>v=2w$&$-1$&$x^2+\frac a2+2^w$\\\hline
$2$&$\ge4$&$3$&$u>v=2w$&$1$&$x^2+2^wx+\frac a2+2^w$\\\hline
$2$&$\ge4$&$3$&$u>v=2w+1$&$\frac a4$&$x^2+2^wx+\frac a2$\\\hline
$2$&$\ge4$&$3$&$u>v=2w+1$&$-\frac a4$&$x^2+2^wx+\frac a2+2^{w+1}$\\\hline
$\ge3$&$\ge4$&$3$&&&$x^2-2$\\\hline
$\ge3$&$\ge 4$&$4$&&&$x^2-2x-2$\\\hline
$\ge3$&$\ge4$&$\ge5$&&&$x^2-2x+2$\\\hline
\end{tabular}
\end{small}
\end{table}
\end{center}
\begin{center}
\begin{table}
\caption{\footnotesize Choice of $\phi(x)$ for which $g(x)=x^4+4x^3+Ax^2+Bx+C$ is $\phi$-regular in second order, Here $v_2(A)>1$, $v_2(B)>1$, $v_2(C)=2$, and  $u=v_2(B+8-2A)$, $v=v_2(C-((A-4)^2/4))$, $d=(C-((A-4)^2/4))/2^v\md4$.   }\label{ChoiceofPhiextra}
\begin{small}
\begin{tabular}{|c|c|c|c|c|c|}\hline
\mbox{\tiny$v_2(\!A\!)$}&\!\mbox{\tiny$v_2(\!B\!+\!8\!)$}\!&\!\!\mbox{\tiny$v_2(\!2A\!+\!C\!+\!4\!)$}\!\!&$u,v$&$d$&$\phi(x)$\\\hline
&$2$&&&&$x^2-2$\\\hline
&$3$&$\ge4$&&&$x^2-2$\\\hline
$2$&$\ge3$&$3$&&&$x^2+2x-2$\\\hline
$2$&$\ge4$&$\ge5$&&&$x^2-2$\\\hline
$2$&$\ge4$&$4$&&&$x^2+2$\\\hline
$\ge3$&$3$&$3$&&&$x^2+2x-2$\\\hline
$\ge3$&$\ge4$&$3$&$u<v$&&$x^2+2x-2+\frac A2$\\\hline
$\ge3$&$\ge4$&$3$&$u=v=2w$&&$x^2+2x-2+\frac A2+2^w$\\\hline
$\ge3$&$\ge4$&$3$&$u=v=2w+1$&$1+\frac A4$&$x^2+(2+2^w)x-2+\frac A2+2^{w+1}$\\\hline
$\ge3$&$\ge4$&$3$&$u=v=2w+1$&$-1+\frac A4$&$x^2+(2+2^w)x-2+\frac A2$\\\hline
$\ge3$&$\ge4$&$3$&$u>v=2w$&$-1$&$x^2+2x-2+\frac A2+2^w$\\\hline
$\ge3$&$\ge4$&$3$&$u>v=2w$&$1$&$x^2+(2+2^w)x-2+\frac A2+2^w$\\\hline
$\ge3$&$\ge4$&$3$&$u>v=2w+1$&&$x^2+(2+2^w)x-2+\frac A2$\\\hline
$\ge3$&$\ge4$&$\ge4$&&&$x^2-2$\\\hline
\end{tabular}
\end{small}
\end{table}
\end{center}
\end{document}